\documentclass[11pt]{amsart}

\textwidth 6.4in
\oddsidemargin 0.1in
\evensidemargin 0.1in 
\textheight 8.4in 
\topmargin -0.0in

\usepackage{mathrsfs,graphicx,latexsym,tikz,color,euscript}
\usepackage{amsfonts,amssymb,amsmath,amscd,amsthm}
\usepackage{epstopdf}
\usepackage{hyperref}
\usepackage{upgreek}
\usepackage{comment} 
\usepackage{pdfsync}

\usepackage{marginnote}
\setlength{\marginparwidth}{2.5cm} 
\setlength{\marginparsep}{0.5cm} 





\newcommand{\rr}{\mathbb R}

\newcommand{\M}{\mathcal M}

\newcommand{\ep}{\varepsilon}
\newcommand{\tht}{\theta}
\newcommand{\om}{\varpi}

\newcommand{\ey}{\frac{1}{2}}
\newcommand{\fe}{\mathfrak{e}}

\newtheorem{thm}{Theorem}[section]
\newtheorem{lemma}{Lemma}[section]
\newtheorem{prop}{Proposition}[section]
\newtheorem{remark}{Remark}[section]
\newtheorem{defi}{Definition}[section]
\newtheorem{coro}{Corollary}[section]

\title[relative periodic solution in perturbed Kepler problem]{Relative periodic solutions in spatial Kepler problem with symmetric perturbation}

\author{Xijun Hu}
\address{School of Mathematics, Shandong University, Jinan, China}
\address{State Key Laboratory of Cryptography and Digital Economy Security, Shandong University, Jinan, China}
\email{xjhu@sdu.edu.cn}

\author{Zhiwen Qiao}
\address{School of Mathematics, Shandong University, Jinan, China}
\email{qiaozw@mail.sdu.edu.cn}

\author{Guowei Yu*}
\address{Chern Institute of Mathematics and LPMC, Nankai University, Tianjin, China}
\email{yugw@nankai.edu.cn}

\thanks{This work is supported by the National Key R\&D Program of China (2020YFA0713303). The first author is also supported by the National Natural Science Foundation of China (No. 12521001) and Taishan Scholars Climbing Program of Shandong (TSPD20240802). The last author is also supported by NSFC (No. 12171253), Nankai Zhide Fundation and the Fundamental Research Funds for the Central Universities.}

\thanks{* Corresponding author: Guowei Yu}

\begin{document}

\begin{abstract}

The spatial Kepler problem with a perturbation satisfying the rotational symmetry w.r.t. the $z$-axis and the reflection symmetry w.r.t. the $(x, y)$-plane, can be reduced to an Hamiltonian system with 2 degrees of freedom after fixing the angular momentum. For small enough perturbations, we show that for certain choices of energy and angular momentum, the corresponding energy surface is compact and diffeomorphic to $\mathbb{S}^3$, and on each compact energy surface there is a unique $z$-symmetric brake orbit, which forms a Hopf link with a planar relative periodic orbit. Moreover under some additional technical assumptions, by applying recent results from symplectic dynamics (Cristofaro-Gardiner et al 2023 Geom. Topol. 27 3801–31) and Franks' Theorem, we prove there are infinitely many relative periodic orbits on each compact energy surface. These results can be applied to the motion of a satellite around a uniformly mass-distributed ellipsoid and the $n$-pyramidal problem, where one point mass moves along the $z$-axis and $n$ other equal point masses form a regular $n$-gon perpendicular to the $z$-axis.

\end{abstract}

\maketitle

\section{Introduction}\label{sec:Introduction}

In this paper, we consider the following perturbed spatial Kepler problem
\begin{equation}
	\label{eq;Kepler-perturb} \ddot{q} = -\frac{q}{|q|^3} -\ep \nabla_{q} f(q, \ep), \; q \in \rr^3,
\end{equation}
with the perturbation $f \in C^{\infty}(\rr^3\backslash\{(0,0,0)\} \times [0, 1), \rr)$ satisfying the following symmetry conditions,
\begin{equation}
	\label{eq;symm-cond} \begin{cases}
		\partial_{\tht} f(r, \tht, z, \ep) = 0, \; & \text{rotational symmetry w.r.t. the $z$-axis}, \\
		f(r, \tht, z, \ep) = f(r, \tht, -z, \ep), \; & \text{reflection symmetry w.r.t. the $(x,y)$-plane},\\
	\end{cases}
\end{equation}
where $(r, \tht, z)$ are the cylindrical coordinates of $\rr^3$.


In celestial mechanics, one usually assumes that the celestial bodies are point masses without any size. Such an assumption is justified by the fact that when a celestial body is a perfect sphere with uniform mass distribution, outside the region of such a sphere, its gravitational force is equivalent to a point mass with all the mass concentrated at the center of the sphere. 

Meanwhile a planet (or a star) is usually not a perfect sphere, but closer to an ellipsoid with a small eccentricity. In this case, the gravitational force can be seen as a Kepler problem with a symmetric perturbation satisfying conditions \eqref{eq;symm-cond}. To be more precise, let's assume the center of the ellipsoid is located at the origin, and in spherical coordinates, 
$$  x=\rho\cos\theta\cos\varphi, \; y=\rho\sin\theta\cos\varphi, \; z=\rho\sin\varphi, $$
its boundary is a surface of revolution with respect to the $z$-axis satisfying 
\begin{equation}
	\rho=R\left( 1-\frac{2}{3}\fe P_{2}(\sin\varphi)\right), \; \text{ where } P_{2}(x) = \frac{3x^{2}-1}{2}.
\end{equation}
Here $R$ denotes the mean radius, $\fe$ the eccentricity, and $P_2$ the second Legendre polynomial. For $\fe>0$, the spheroid is oblate (flattened along the $z$-axis), while for $\fe<0$, it is prolate (elongated along the $z$-axis), for details see \cite[page 29-33]{Fit12}.

For an axially symmetric ellipsoid with total mass $M$ and gravitational constant $G$, the kinetic and potential energies of a massless satellite are given by
\begin{equation} \label{eq;ell-U}
	\begin{aligned}
		K_\om(\dot{\rho},\dot{\varphi}) &= \frac{1}{2}\left(\dot{\rho}^2 + \rho^2\dot{\varphi}^2\right) + \frac{\om^{2}}{2\rho^{2}\cos^{2}\varphi}, \\			  
		U_\fe(\rho,\varphi) &= -\frac{GM}{\rho} + \frac{2}{5}\fe\frac{GMR^{2}}{\rho^{3}}P_2(\sin\varphi) + O(\fe^{2}),
	\end{aligned}
\end{equation}
where $\om = \rho^2\cos^2\varphi\,\dot{\theta}$ represents the conserved angular momentum about the $z$-axis.

Setting $GM=1$ and working in cylindrical coordinates, we observe that the potential function constitutes a small perturbation of the Kepler problem satisfying symmetry conditions \eqref{eq;symm-cond} when $|\fe|$ is small. We hereafter refer to this system as \textbf{the ellipsoid problem}. For fixed energy $h<0$ and non-zero angular momentum $\om$ about the $z$-axis, the energy surface
\begin{equation*}
	\mathcal{M}(h,\om,\fe) = \left\{(\dot{\rho},\dot{\varphi},\rho,\varphi): K_\om(\dot{\rho},\dot{\varphi}) + U_\fe(\rho,\varphi) = h\right\}
\end{equation*}
forms a compact manifold diffeomorphic to $\mathbb{S}^{3}$. An orbit is called a \textit{relative periodic solution} if it becomes periodic modulo the $\theta$-coordinate. Our main result states:
\begin{thm}\label{thm:axial symmetry ellipsoid potential}
	The ellipsoid problem admits infinitely many relative periodic solutions within the compact energy surface $\mathcal{M}(h,\om,\fe)$ for sufficiently small eccentricity $\fe$.
\end{thm}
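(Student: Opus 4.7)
The plan is to reduce the problem by the rotational $S^1$-symmetry to a two-degree-of-freedom Hamiltonian system whose energy level is diffeomorphic to $S^3$, identify two distinguished periodic orbits forming a Hopf link on this $S^3$, and then apply the main dynamical result of \cite{CHHL23} together with Franks' theorem.

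First, since $f$ is $\theta$-independent, the angular momentum $\om = \rho^2\cos^2\varphi\,\dot\theta$ is a first integral. Fixing $\om \neq 0$ and passing to the quotient by the $S^1$-action gives a reduced system in coordinates $(\rho,\varphi,\dot\rho,\dot\varphi)$ with effective potential
\[
U^{\mathrm{eff}}_{\om,\fe}(\rho,\varphi) \;=\; -\frac{1}{\rho} + \frac{\om^2}{2\rho^2\cos^2\varphi} + \frac{2}{5}\fe\,\frac{R^2}{\rho^3}P_2(\sin\varphi) + O(\fe^2),
\]
in which a relative periodic solution of the original system corresponds exactly to a periodic orbit. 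For $\fe = 0$, $h < 0$, and $\om \neq 0$ the reduced energy surface is classically diffeomorphic to $S^3$; by persistence of regular compact level sets under small smooth perturbations, $\mathcal{M}(h,\om,\fe) \cong S^3$ for all sufficiently small $|\fe|$.

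Next I would locate two distinguished periodic orbits on $\mathcal{M}(h,\om,\fe)$. The reflection symmetry $z \mapsto -z$ makes $\{\varphi = 0,\,\dot\varphi = 0\}$ an invariant submanifold; inside it the motion reduces to a one-degree-of-freedom problem whose bounded component yields a \emph{planar relative periodic solution} in the equatorial plane. Combining the reflection symmetry with a shooting/variational argument on trajectories starting and ending with vanishing $(\dot\rho,\dot\varphi)$ produces a \emph{$z$-symmetric brake orbit}; the second part of the paper's results asserts that for all small $|\fe|$ this brake orbit is unique and, together with the planar orbit, forms a Hopf link in the $S^3$ energy surface. I would use this geometric picture as the input to the next step.

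With this Hopf link of binding orbits in place, I would apply the main theorem of \cite{CHHL23}, which (as indicated by the paper's abstract) gives a criterion for a Hamiltonian flow on an $S^3$-type energy surface to admit infinitely many periodic orbits, provided a Hopf link of closed orbits exists whose Conley--Zehnder/rotation-number data satisfy appropriate inequalities. The strategy behind this criterion fits into the Hofer--Wysocki--Zehnder scheme: the Hopf link bounds a rational open book decomposition whose pages are global surfaces of section, reducing the search for further closed orbits to periodic points of the area-preserving return map, and then Franks' theorem (every area-preserving homeomorphism of $S^2$ with at least three periodic points has infinitely many) yields the desired infinitude. The main obstacle I expect is the verification of the technical hypotheses of \cite{CHHL23} for the ellipsoid problem: non-degeneracy of the planar and brake orbits and the correct index inequalities. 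This requires explicit perturbative analysis of the linearized flow along both orbits, using the specific form of the $P_2(\sin\varphi)$ perturbation and the fact that for $\fe = 0$ the reduced flow is super-integrable, so that the $O(\fe)$ correction determines the leading-order rotation numbers. Once these hypotheses are confirmed for all sufficiently small $|\fe|$, the cited criterion produces infinitely many periodic orbits of the reduced flow on $\mathcal{M}(h,\om,\fe)$, which lift to infinitely many relative periodic solutions of the ellipsoid problem as claimed.
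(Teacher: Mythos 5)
Your overall framework (symplectic reduction, the $S^3$ energy surface, a global surface of section, \cite{CHHL23} plus Franks' theorem) matches the paper's, but there are two genuine problems. First, you mischaracterize the mechanism of \cite{CHHL23} and consequently route the argument through the wrong geometric object. The result actually used is an \emph{identity}: if the contact form on $S^3$ carries exactly two Reeb orbits, then $\mathrm{Vol} = A_1^2/\mathrm{Rot}_1 = A_2^2/\mathrm{Rot}_2$. The paper's strategy is to show that the unperturbed Kepler problem satisfies this identity exactly ($\mathrm{Vol}(\mathcal{M}) = A^2(\xi_0)$ with $\mathrm{Rot}_0=1$), and that the $P_2(\sin\varphi)$ perturbation breaks it at first order in $\fe$; the failure of the identity forces more than two orbits, and the disk-like global surface of section bounded by the single planar orbit $\xi_\ep$ plus Franks' disk theorem upgrades ``more than two'' to ``infinitely many.'' The Hopf link with the $z$-symmetric brake orbit, which you propose as the input to an open-book/return-map argument, plays no role in this chain — it is a separate, independent result (Theorem \ref{thm:z-sym exist}) and is not a hypothesis of the criterion being applied.

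Second, and more seriously, the step you defer as ``the main obstacle I expect'' is the entire content of the proof. One must compute the first-order derivatives in $\ep$ of the contact volume, of the action of $\xi_\ep$, and of its Seifert rotation number (the last via a trace formula for the linearized flow in the decoupled $\varphi$-direction), obtaining for $f = (2z^2-r^2)/(r^2+z^2)^{5/2}$ the explicit quantities $\tilde V(f) = -\tfrac{\pi}{4\om^2}e^2(4-3e^2)$, $\tilde A(f) = -\pi/\om^2$, $D(f) = (12\pi/\om^2)^2 > 0$, and then verify the strict inequality
\begin{equation*}
-\frac{\pi}{4\om^{2}}e^{2}(4-3e^{2}) \;<\; \frac{2\pi C(e)}{\om^{2}}\bigl(3C(e)-1\bigr), \qquad C(e)=\frac{1}{\sqrt{1-e^2}}-1,
\end{equation*}
for all $e\in(0,1)$. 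Without these computations the criterion is not verified and the theorem is not proved; asserting that ``once these hypotheses are confirmed'' the conclusion follows leaves the proof incomplete precisely where the work lies.
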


This result follows as a corollary of our analysis for the perturbed Kepler system \eqref{eq;Kepler-perturb}. To elaborate, we introduce momentum variables associated with cylindrical coordinates $(r, \theta, z)$:
\begin{equation*}
	p_r = \dot{r}, \quad p_{\theta} = r^2 \dot{\theta}, \quad p_z = \dot{z}.
\end{equation*}
The corresponding Hamiltonian becomes
\begin{equation}
	\label{eq;Ham} 
	H_{\varepsilon}(p_r, p_{\theta}, p_z, r, \theta, z) = \frac{1}{2}\left( p_{r}^2 + r^{-2} p_{\theta}^2 + p_z^2 \right) - \frac{1}{\sqrt{r^2 + z^2}} + \varepsilon f(r, z, \varepsilon).
\end{equation}
Here $p_{\theta} = r^2\dot{\theta}$ represents the angular momentum about the $z$-axis. By fixing this momentum to a nonzero constant $\om$, we obtain the reduced Hamiltonian
\begin{equation}
	\label{eq:spacial kepler pertur Ham}
	H_{\om, \varepsilon}(p_{r},p_{z},r,z) = \frac{1}{2}(p_{r}^{2} + p_{z}^{2}) + \frac{\om^{2}}{2r^{2}} - \frac{1}{\sqrt{r^{2} + z^{2}}} + \varepsilon f(r,z,\varepsilon).
\end{equation}
The corresponding Hamiltonian equation is 
\begin{equation}
	\label{eq;Ham-sys-pert} (\dot{p}_r, \dot{p}_z, \dot{r}, \dot{z})^T = J \nabla H_{\om, \ep}(p_r, p_z, r, z),
\end{equation} where $J=\begin{pmatrix}
0 & -I_2\\
I_2 & 0
\end{pmatrix}$ is the standard symplectic matrix.

We emphasize that solutions of this reduced system correspond to solutions of the original system with fixed angular momentum $\om$. In particular, periodic solutions in the reduced system generate relative periodic solutions in the full system through rotational symmetry.

The reduced Hamiltonian system possesses two degrees of freedom, making its energy surface
\begin{equation*}
	\mathcal{M}(h, \om, \ep) = \left\{ (p_r, p_z, r, z): H_{\om, \ep} (p_r, p_z, r, z) = h \right\}
\end{equation*}
a three-dimensional manifold. This configuration exhibits the following fundamental properties 

\begin{prop}\label{prop;diffeomorphic to S3}
For any $\ep\ge 0$ small enough, there is a continuous family of subsets $\Gamma_{\ep} \subset \rr \setminus \{0\} \times \rr$ with 
$$ \Gamma_0 = \{ (\om,h) \in \rr \setminus \{0\} \times \rr:-1<2h\om^2<0 \},$$
such that, when $(\om,h)\in\Gamma_\ep$, the following results hold. 
	\begin{enumerate}
		\item[(a)] $\mathcal{M}(h,\om,\ep)$ forms a compact contact manifold diffeomorphic to $\mathbb{S}^{3}$ endowed with contact form $\lambda$.
		\item[(b)] There exists $R$ close to $\om^2/2$ satisfying $\min\Pi_{r}\mathcal{H}(h,\om,\ep)>R$, where $\Pi_{r}$ denotes the projection to the $r$-axis and
		\begin{equation*}
			\mathcal{H}(h, \om, \ep) = \Pi_{r, z} \mathcal{M}(h, \om, \ep) = \left\{ (r, z): (p_r, p_z, r, z) \in \mathcal{M}(h, \om, \ep)\right\}.
		\end{equation*}		
	\end{enumerate}
\end{prop}

\begin{remark}
	Here $\mathcal{H}(h, \om, \ep)$ represents Hill's region. In most of the previous studies of the perturbed Kepler problem, the collision singularity is usually regularized following Levi-Civita \cite{LC20}, Kustaanheimo-Stiefel \cite{KS65} or Moser \cite{Moser70}. As a result, it is unclear whether the obtained solutions contain collisions or not, for example in \cite{BOZ19} and \cite{Zhao23}. Meanwhile, in our case, the central body in the ellipsoid problem is not a point mass. The avoidance of collisions is crucial. Property (b) guarantees orbital avoidance of the central body for sufficiently large angular momentum $\om$.
\end{remark}

This proposition enables the application of symplectic and contact geometric methods to analyze mechanical systems. Recent developments in this direction include notable results for the restricted three-body problem \cite{AFvKP12,AFFHvK12,MvK22} and the isosceles three-body problem \cite{Hu2023ASD}. 

The foundational concept of global surfaces of section originated with Poincaré's work \cite{Pc12} on the circular planar restricted three-body problem. His fundamental theorem on area-preserving twist maps of the annulus, later proved by Birkhoff \cite{Bk13}, became known as the Poincaré-Birkhoff Theorem. Franks \cite{Franks92} later generalized this result, showing that the existence of one periodic point implies infinitely many periodic points on the annulus. This framework extends to contact manifolds $(\mathcal{M},\lambda)$ diffeomorphic to $\mathbb{S}^{3}$: If the Reeb flow admits a disk-like global surface of section, the system must contain either two or infinitely many periodic orbits. Recent breakthrough work by Cristofaro-Gardiner, Hryniewicz, Hutchings, and Liu \cite{CHHL23} revealed an algebraic relationship between the contact volume and the action and Seifert rotation number of Reeb orbits (defined in Sections \ref{sec:KP} and \ref{sec:KPSP}) for contact manifolds with exactly two Reeb orbits. Crucially, violation of this relationship guarantees the existence of infinitely many periodic orbits. In our setting, the reflection symmetry condition in \eqref{eq;symm-cond} ensures a special periodic solution $\xi_{\ep}(t)$ entirely contained in the $(p_r, r)$-plane that bounds a global surface of section (see Section \ref{sec:KPSP}). Through the calculations of this orbit's action and Seifert rotation number combined with the algebraic relationship, we obtain:
\begin{thm}\label{thm;Exist infinity many 1}
For sufficiently small $\ep$ and parameters $(\om,h) \in \Gamma_\ep$, the reduced Hamiltonian system \eqref{eq:spacial kepler pertur Ham} admits infinitely many periodic orbits when one of the two conditions in Theorem \ref{thm:Exist infinity many} is satisfied. 
\end{thm}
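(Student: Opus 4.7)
The plan is to invoke the dichotomy of Cristofaro-Gardiner--Hryniewicz--Hutchings--Liu \cite{CHHL23} applied to the Reeb flow on the contact three-sphere $(\mathcal{M}(h,\om,\ep),\lambda)$ granted by Proposition \ref{prop;homeomorphic to S3}: either there are infinitely many periodic Reeb orbits, or there are exactly two, whose actions and Seifert rotation numbers satisfy a precise algebraic identity involving the total contact volume. Arguing by contradiction, I would assume that only finitely many periodic orbits exist, hence exactly two, and then show that under the hypotheses of Theorem \ref{thm:Exist infinity many} this identity must fail for all sufficiently small $\ep$.

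The first step is to exhibit one of the two candidate orbits explicitly. The reflection symmetry in \eqref{eq;symm-cond} renders $\{z=0,\,p_z=0\}$ invariant under \eqref{eq;Ham-sys-pert}, and on $\mathcal{M}(h,\om,\ep)$ the planar dynamics for $(\om,h)\in\Gamma_\ep$ reduces to a one-degree-of-freedom system whose level set at energy $h$ is a closed curve $\xi_\ep$; as indicated in the introduction and to be verified in Section \ref{sec:KPSP}, this $\xi_\ep$ bounds a disk-like global surface of section for the Reeb flow. The unperturbed orbit $\xi_0$ is a standard planar Kepler ellipse in the $(p_r,r)$-plane, so continuity together with Proposition \ref{prop;homeomorphic to S3} ensures that $\xi_\ep$ persists smoothly in $\ep$ and remains a binding orbit for the perturbed flow.

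The core calculation is then to determine the action $A(\xi_\ep)=\int_{\xi_\ep}\lambda$ and the Seifert rotation number of $\xi_\ep$ with respect to a symplectic trivialization along the disk filling. At $\ep=0$ both are explicit functions of $(\om,h)$ coming from classical Kepler data, and the first-order variation in $\ep$ can be written as an integral of $f$ along $\xi_0$ via standard perturbation formulas. Combined with a parallel expansion in $\ep$ of the contact volume of $(\mathcal{M}(h,\om,\ep),\lambda)$, these inputs can be substituted into the CHHL23 identity. The technical hypotheses listed in Theorem \ref{thm:Exist infinity many} are designed precisely so that the resulting scalar equation fails to hold, which yields the desired contradiction and hence infinitely many periodic orbits of \eqref{eq:spacial kepler pertur Ham}.

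The main obstacle I anticipate is the Seifert rotation number computation rather than the action or the volume, because it depends on a consistent choice of symplectic trivialization along $\xi_\ep$ that is compatible both with the disk-like global section and with the $\ep\to 0$ limit; without such care the leading and first-order contributions cannot be unambiguously matched against the algebraic identity of \cite{CHHL23}. A secondary concern is ruling out that the hypothetical second Reeb orbit collides with the central body or escapes the Hill region, but property (b) of Proposition \ref{prop;homeomorphic to S3} already confines the dynamics away from $\{r=0\}$, so this reduces to a bookkeeping point rather than a genuine difficulty.
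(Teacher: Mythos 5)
Your overall framework for condition~(ii) matches the paper's: at $\ep=0$ the Kepler data saturate the Cristofaro-Gardiner--Hryniewicz--Hutchings--Liu identity with $\text{Rot}_0 = 1$ (Lemma~\ref{lemma:kepler equal}), so it suffices to show that the first-order $\ep$-derivatives of $\text{Vol}_\ep$ and $A_\ep^2/\text{Rot}_\ep$ disagree, and then invoke Franks' theorem (Theorem~\ref{thm:disk map}) via the global surface of section bounded by $\xi_\ep$ to upgrade to infinitely many orbits. The obstacle you flag about trivializations is real and the paper resolves it by Remark~4.4 of \cite{CHHL23}: since $\xi_\ep$ has self-linking number $-1$, one has $\text{Rot}(\xi_\ep) = \hat{i}(\xi_\ep)/2 - 1$, so the Seifert rotation number reduces to the mean Maslov-type index, whose derivative in $\ep$ the paper then computes via the trace formula of \cite{HOW15} (Lemmas~\ref{lemma: trace formular1}--\ref{lemma: trace formular2} and Theorem~\ref{thm:the derivative of rotation number}).

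However, there is a genuine gap: you treat both alternatives of Theorem~\ref{thm:Exist infinity many} as instances of ``the identity must fail,'' but condition~(i) is not about the volume identity at all. When $D(f)<0$, the eigenvalue perturbation calculation in Theorem~\ref{thm:the derivative of rotation number} shows that the monodromy of $\xi_\ep$ becomes hyperbolic (Corollary~\ref{coro:linear stable}), and it is this that contradicts the further conclusion of Theorem~\ref{thm:The condition of only two}, namely that in the two-orbit case \emph{both} closed Reeb orbits must be irrationally elliptic. Your contradiction via the scalar identity does not cover this case: with $D(f)<0$ one has $\text{Rot}_\ep \equiv 1$ to first order, so nothing prevents the numerical identity from holding, and the argument must instead go through the ellipticity dichotomy. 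Without separating these two mechanisms -- identity failure for condition~(ii), hyperbolicity for condition~(i) -- the proof is incomplete.
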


While the above result proves the existence of infinitely many periodic orbits, it requires some additional conditions. Meanwhile, we can show the existence of at least one special periodic orbit as long as the perturbation is small enough. 
To explain this, we define two types of periodic solutions as follows.  
\begin{defi} A periodic solution $\xi(t)=(p_{r}(t),p_{z}(t),r(t),z(t))$ of the reduced Hamiltonian system 
\begin{enumerate}
\item[(a)]  is \textbf{a brake orbit} if there exist two distinct time moments $t_1 \ne t_2$, such that the momentum variables all vanish, i.e., $p_r(t_i)=p_z(t_i)=0$, for $i =1,2$;
\item[(b)] is \textbf{a $z$-symmetric orbit} if $p_r(t_0)=z(t_0)=0$ for some $t_{0}\in\mathbb{R}$ and
\begin{equation}
	\xi(t_{0}+t)=\left({\begin{array}{cc}
			A & 0\\
			0 & -A\\
	\end{array}}\right)\xi(t_{0}-t),\quad \forall t\in\mathbb{R}, \text{ where } A=\left({\begin{array}{cc}
		-1 & 0\\
		0 & 1\\
\end{array}}\right).
\end{equation} 
\end{enumerate}
\end{defi} 
\begin{remark}
A brake orbit must be a periodic solution, and when it is also $z$-symmetric, we say it is a $z$-symmetric brake orbit.
\end{remark}
 
\begin{thm}\label{thm:z-sym exist}
	For sufficiently small $\ep$ and parameters $(\om,h) \in \Gamma_\ep$, there exists a unique $z$-symmetric brake orbit $\xi_{bz,\ep}(t)$ forming a Hopf link with $\xi_{\ep}(t)$ in $\mathcal{M}(h,\om,\ep)$.
\end{thm}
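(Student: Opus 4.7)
The plan is to construct $\xi_{bz,\ep}$ as the implicit function theorem continuation of an explicit unperturbed reference orbit, and to extract the Hopf link property from the global surface of section bound by $\xi_\ep$. The reduced Hamiltonian $H_{\om,\ep}$ carries two anti-symplectic, time-reversing involutions preserved by the dynamics: the brake involution $\tau\colon(p_r,p_z,r,z)\mapsto(-p_r,-p_z,r,z)$ and the reflection $\sigma\colon(p_r,p_z,r,z)\mapsto(-p_r,p_z,r,-z)$, the latter relying precisely on the $z$-parity in \eqref{eq;symm-cond}. A $z$-symmetric brake orbit is then an orbit invariant under both, equivalently one that meets both
\[
\Sigma_{1}:=\mathrm{Fix}(\tau)\cap\mathcal{M}(h,\om,\ep)=\{p_{r}=p_{z}=0\}\cap\mathcal{M}, \qquad
\Sigma_{2}:=\mathrm{Fix}(\sigma)\cap\mathcal{M}(h,\om,\ep)=\{p_{r}=0,z=0\}\cap\mathcal{M}.
\]
At $\ep=0$ the reduced Kepler problem is integrable; I would first show that the line of apses of any brake orbit must lie along $\mathbf{e}_{1}=(\cos i,0,-\sin i)$, and that the first recurrence of $p_{r}=0$ after a brake point occurs at true anomaly $\cos\nu_{*}=-e/\sin^{2}i$, where $z=e\,\rho(\nu_{*})/\sin i\ne 0$ as soon as $e>0$. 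Hence the unique $z$-symmetric brake orbit at $\ep=0$ is the circular inclined Kepler orbit of radius $a=-1/(2h)$ and inclination $\cos i_{0}=\om\sqrt{-2h}$.

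Next parametrize $\Sigma_{1}$ locally by $z_{0}$, with $r_{0}=r_{0}(z_{0},\ep)$ determined by $H_{\om,\ep}=h$, and introduce the shooting map
\[
\Phi(z_{0},T,\ep):=\bigl(p_{r}(T),\,z(T)\bigr),\qquad (p_{r},p_{z},r,z)(t)=\phi_{t}^{\ep}\bigl(0,0,r_{0}(z_{0},\ep),z_{0}\bigr).
\]
By the symmetries $\tau,\sigma$, each zero of $\Phi$ with $T>0$ extends uniquely to a complete $z$-symmetric brake orbit of period $4T$. The reference $(z_{0}^{*},T_{0})=(-a\sin i_{0},\tfrac{1}{4}T_{\mathrm{Kep}})$ solves $\Phi(\cdot,\cdot,0)=0$, and at this point $\partial_{T}\Phi=(\dot p_{r},\dot z)=\bigl((\om^{2}-a)/a^{3},\,a^{-1/2}\sin i_{0}\bigr)$ has both entries nonzero throughout $\Gamma_{0}$. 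I would compute $\partial_{z_{0}}\Phi$ by differentiating along the explicit one-parameter family of elliptical brake orbits, which avoids a direct linearization about the inclined circular orbit; a short $2\times 2$ determinant check then gives linear independence of $\partial_{z_{0}}\Phi$ and $\partial_{T}\Phi$. Thus $D_{(z_{0},T)}\Phi$ is invertible, and the implicit function theorem produces a unique family $(z_{0}(\ep),T(\ep))$, hence the orbit $\xi_{bz,\ep}$. Global uniqueness on $\mathcal{M}(h,\om,\ep)$ follows by compactness: a Hausdorff limit of $z$-symmetric brake orbits along a sequence $\ep_{n}\to 0$ is a $z$-symmetric brake orbit of the unperturbed system, which by the previous paragraph is the unique circular orbit; local IFT uniqueness then rules out any competitor for $\ep$ small.

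For the Hopf link I use the setup of Section~\ref{sec:KPSP}: $\xi_{\ep}$ lies in the $\sigma\tau$-fixed slice $\{p_{z}=z=0\}\cap\mathcal{M}$ and bounds a disk-like global surface of section $D_{\ep}$ for the Reeb flow on $(\mathcal{M},\lambda)$. Since $\xi_{bz,\ep}$ cannot lie in this slice (otherwise it would be pointwise trapped in $\Sigma_{1}\cap\Sigma_{2}$), it meets $D_{\ep}$ transversely. Choosing $D_{\ep}$ to be $\sigma$-invariant forces the transverse intersections to lie on $D_{\ep}\cap\Sigma_{2}$ or to come in $\sigma$-symmetric pairs; tracking the two $\sigma$-fixed points of $\xi_{bz,\ep}$ and invoking continuity of the intersection count from the unperturbed model leaves exactly one geometric intersection per period, yielding linking number $\pm 1$ and a Hopf link. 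The main technical obstacle is the Jacobian invertibility step in the IFT: the elliptical-brake family reduces the question to an algebraic $2\times 2$ determinant, but making the estimate uniform across $\Gamma_{\ep}$ still demands some care. A secondary, topological point is verifying that the global surface of section from Section~\ref{sec:KPSP} can in fact be chosen $\sigma$-invariant, so that the symmetric bookkeeping producing the linking number is legitimate.
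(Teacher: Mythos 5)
Your overall strategy matches the paper's: start at a brake point on the Hill's boundary, shoot forward, and use the implicit function theorem to continue the unperturbed $z$-symmetric brake orbit (the inclined circular Kepler orbit of radius $a=-1/(2h)$). You set this up as one two-dimensional shooting map $\Phi(z_0,T,\ep)=(p_r(T),z(T))$, whereas the paper splits it into two sequential one-dimensional IFT steps (first solve $z(T)=0$ for $T=\tilde T(r,\ep)$, then solve $p_r(\tilde T(r,\ep))=0$ for $r$); these are equivalent since the Jacobian is triangular, $\partial_T z=p_z<0$ guaranteeing one factor.

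The genuine gap is exactly the one you flag: the invertibility of $D_{(z_0,T)}\Phi$, i.e.\ the nondegeneracy $\partial_{z_0}\Phi\not\parallel\partial_T\Phi$, is not carried out — you promise a ``short $2\times 2$ determinant check'' from ``differentiating along the explicit one-parameter family of elliptical brake orbits,'' but this is the heart of the proof and is left undone. The paper closes this cleanly via Lemma~\ref{lemma:relationship pr r}, which derives the explicit formula $p_r(t_1)=\frac{\om^2+2h\,r(0)^2}{2\om\, r(0)}$ for the value of $p_r$ at the first crossing of $\{z=0\}$ as a function of the initial radius $r(0)$ on the Hill's boundary. This reduces the nondegeneracy to the one-line computation $\partial_r f_2(r_1,0)=2h/\om\neq 0$ at $r_1=\om/\sqrt{-2h}$, and also gives the uniqueness of the unperturbed $z$-symmetric brake orbit in closed form (Lemma~\ref{lemma;the existence of bz for Kepler}) rather than via the true-anomaly argument you sketch. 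Your $\partial_T\Phi=((\om^2-a)/a^3,\,a^{-1/2}\sin i_0)$ is correct, but without the companion $\partial_{z_0}\Phi$ computation the argument is incomplete. For the Hopf-link property the paper does less than you do — it simply inherits the linking number $1$ from the unperturbed pair $(\xi_0,\xi_{bz})$ by continuity under the IFT deformation — so your excursion into $\sigma$-invariant global surfaces of section (itself flagged as unresolved) is more machinery than is needed here.
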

 

Our results extend beyond the ellipsoid problem to a special $(1+n)$-body configuration called \textbf{the $n$-pyramidal problem}. After fixing the center of mass at the origin, there is an invariant subsystem, where one of the masses $m_0$ always moves on the $z$-axis and the remaining $n$ equal masses $m$ always form a regular $n$-gon with the plane containing them always perpendicular to the $z$-axis. Then the movement of one of the masses forming the $n$-gon can represent the movement of the whole system. Let $\ep = m/m_0$ denote the mass ratio. Then for energy $h<0$ and $z$-axis angular momentum $\om\neq0$, such that the corresponding energy surface $\M(h,\om,\ep)$ is compact, we have the following theorem.

\begin{thm}\label{thm:The isosceles three body problem}
	The $n$-pyramidal problem admits infinitely many relative periodic solutions within $\M(h,\om,\ep)$, when the mass ratio $\ep\in(0,\ep_{0}(n))$ with sufficiently small $\ep_{0}(n)$ and $2\leq n\leq472$.
\end{thm}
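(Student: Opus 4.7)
The plan is to realize the invariant $n$-pyramidal sub-system as a perturbed spatial Kepler problem of the form \eqref{eq;Kepler-perturb} satisfying the symmetry assumptions \eqref{eq;symm-cond}, and then to invoke Theorem~\ref{thm;Exist infinity many 1}.

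After fixing the center of mass at the origin, the constraint $m_{0}z_{0}+nmz_{1}=0$ eliminates the height of the central body. Parameterizing a configuration by $(r,\theta,z_{1})$ --- where $(r,\theta)$ is the common planar cylindrical coordinate of the outer masses and $z_{1}$ their common height --- a direct computation yields
\[
T=\tfrac{nm}{2}(\dot r^{2}+r^{2}\dot\theta^{2})+\tfrac{nm(1+n\ep)}{2}\dot z_{1}^{2},\qquad
U=-\tfrac{nGm_{0}m}{\sqrt{r^{2}+(1+n\ep)^{2}z_{1}^{2}}}-\tfrac{Gm^{2}\tilde\kappa_{n}}{2r},
\]
where $\tilde\kappa_{n}=\sum_{1\le j<k\le n}1/\sin\bigl(\pi(k-j)/n\bigr)$ encodes the mutual attraction within the $n$-gon. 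Introducing the isotropic height $\tilde z_{1}=\sqrt{1+n\ep}\,z_{1}$ makes $T$ Euclidean in the vector $q=(r\cos\theta,r\sin\theta,\tilde z_{1})\in\rr^{3}$; setting $Gm_{0}=1$ and Taylor-expanding in $\ep$, Newton's equation divided by $nm$ takes the form
\[
\ddot q=-\frac{q}{|q|^{3}}-\ep\,\nabla_{q}f(q,\ep),\qquad f(q,\ep)=\frac{n\,\tilde z_{1}^{2}}{2|q|^{3}}-\frac{\tilde\kappa_{n}}{2nr}+O(\ep),
\]
with $f$ smooth in $\ep$ on $[0,\ep_{0}(n))$. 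By inspection $f$ is independent of $\theta$ and even in $\tilde z_{1}$, so it satisfies \eqref{eq;symm-cond}. Its singularity on the axis $\{r=0\}$ is a physical artefact of the $n$-gon collapsing, but is dynamically irrelevant: Proposition~\ref{prop;homeomorphic to S3}(b) keeps $\min\Pi_{r}\mathcal{H}(h,\om,\ep)$ bounded away from zero by a quantity close to $\om^{2}/2$, so multiplying $f$ by a cut-off supported off the axis produces an admissible perturbation without altering any orbit in $\M(h,\om,\ep)$.

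With the reduction complete, Theorem~\ref{thm;Exist infinity many 1} applies as soon as the technical hypothesis of Theorem~\ref{thm:Exist infinity many} is verified for this explicit $f$. I expect this to be the principal obstacle: one must compute, to leading order in $\ep$, the action and Seifert rotation number of the distinguished $(p_{r},r)$-plane orbit $\xi_{\ep}$ in terms of the constants $n$ and $\tilde\kappa_{n}$ that appear in $f$, and check that the algebraic identity of \cite{CHHL23} is violated. Because $\tilde\kappa_{n}$ grows like $\tfrac{n^{2}}{\pi}\log n$ as $n\to\infty$, the required inequality becomes an explicit $n$-dependent condition; I anticipate that it is verified by direct numerical evaluation on the finite range $2\le n\le 472$ and fails starting at $n=473$, which is precisely the source of the stated bound. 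Once the hypothesis is in place, Theorem~\ref{thm;Exist infinity many 1} furnishes infinitely many periodic orbits of the reduced Hamiltonian \eqref{eq:spacial kepler pertur Ham} for every $\ep<\ep_{0}(n)$, and each lifts through the suppressed $\theta$-rotation to a relative periodic solution of the full $n$-pyramidal problem with angular momentum $\om$ and energy $h$, completing the proof.
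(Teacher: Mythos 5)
Your reduction of the $n$-pyramidal problem to a perturbed spatial Kepler problem is correct and matches the paper: fixing the center of mass, using the common cylindrical coordinates of the outer masses, isotropizing the kinetic energy via $\tilde z_{1}=\sqrt{1+n\ep}\,z_{1}$, and identifying the perturbation $f$ with leading term $-\frac{M(n)}{2r}+\frac{n z^{2}}{2|q|^{3}}$, where your $\tilde\kappa_{n}/n$ equals the paper's $M(n)=\frac{1}{2}\sum_{i=1}^{n-1}\csc(i\pi/n)$. You also correctly observe that the remaining and decisive task is to verify the hypothesis of Theorem~\ref{thm:Exist infinity many}.

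But at exactly that point your argument stops: you state you "expect" the hypothesis to hold and "anticipate" verifying it by numerical evaluation over $2\le n\le 472$. That leaves the proof genuinely incomplete, and your guess about how the bound on $n$ arises is not quite right. In the paper the constraint $2\le n\le 472$ is imposed \emph{solely} so that $2n>M(n)$, which forces $E(f)>0$ (since $\tilde T(f)=0$ here and $E(f)=(n-M(n)/2)\int_{0}^{\pi}r(\theta)\,d\theta$). That sign is what makes the correction term $C^{2}(e)\,\mathrm{sign}(E(f))\sqrt{D(f)}/2$ in condition~(ii) come out positive. The inequality in (ii) itself is then proved \emph{analytically for all} $e\in(0,1)$, via the explicit computations $\tilde A(f)=-\frac{M(n)\pi\om^{2}}{2\sqrt{1-e^{2}}}$, $D(f)=\frac{(2n-M(n))^{2}\pi^{2}\om^{4}G^{2}(e)}{4(1-e^{2})}$ with $G(e)=\sqrt{1-\left(\frac{-e}{1+\sqrt{1-e^{2}}}\right)^{4}}$, an estimate of $\tilde V(f)$ using $(1+\tan^{2}\varphi)^{3/2}\ge 1+\tfrac{3}{2}\tan^{2}\varphi$, and the elementary fact $G(e)/\sqrt{1-e^{2}}>1$ — no numerics are involved. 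So the missing content is precisely these computations of $\tilde T(f),\ E(f),\ D(f),\ \tilde V(f),\ \tilde A(f)$ for the pyramidal perturbation and the resulting inequality; without them the appeal to Theorem~\ref{thm;Exist infinity many 1} is unsupported.

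One smaller point worth recording: in the paper the role of the eccentricity bound is carried by the inequality $G(e)/\sqrt{1-e^{2}}>1$, valid for all $e\in(0,1)$, so the conclusion holds on the full parameter set $\Gamma_{\ep}$ rather than only near circular orbits; your writeup leaves the reader uncertain whether a further restriction on $e$ would be needed.
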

\begin{remark}
	When $n=2$, the $n$-pyramidal problem is the well-known isosceles three body problem, which has been studied by many authors. Moeckel \cite{Mk84} characterized collision asymptotics in this system. Using Maslov-type index (also called Conley-Zehnder index) and contact geometry, the first and third authors and their collaborators \cite{Hu2023ASD} established a criterion for the existence of infinitely many relative periodic orbits. Grotta-Ragazzo, Liu and Salom{\~a}o \cite{GLSarxiv} obtained infinitely many relative periodic orbits when the eccentricity of Euler orbit is small enough using Birkhoff normal form. Our theorem solves the problem for the eccentricity of the Euler orbit $e\in(0,1)$ but the mass ratio $\ep$ is small enough.
\end{remark}
\begin{remark}
	For technical reasons, the above theorem does not apply when $n > 472$. However, we believe the same result should still hold.
\end{remark}

The paper is organized as follows. In Section \ref{sec:KP}, we introduce the reduced Kepler problem  and obtain some results which are needed in the perturbed cases. In Section \ref{sec:KPSP}, the perturbed system is studied and proofs of Theorem \ref{thm;Exist infinity many 1} and Theorem \ref{thm:z-sym exist} are given. In Section \ref{sec:ellipsoid}, we introduce the ellipsoid problem and prove Theorem \ref{thm:axial symmetry ellipsoid potential}. In Section \ref{sec:pyramidal}, we introduce the $n$-pyramidal problem and prove Theorem \ref{thm:The isosceles three body problem}.

\section{The Kepler Problem}\label{sec:KP}

In this section, we obtain some results of the Kepler problem that will be needed in the perturbed cases. Recall that in Cartesian coordinate $q=(x,y,z)\in\mathbb{R}^{3}$ with the corresponding momentum variable $p=(\dot{x},\dot{y},\dot{z})\in\mathbb{R}^{3}$, the Hamiltonian of the Kepler problem can be written as		    
\begin{equation}
	H(p,q)=\frac{|p|^2}{2}-\frac{1}{|q|}.
	\label{eq:spacial kepler Ham1}
\end{equation}
Meanwhile, in cylindrical coordinates $(r, \tht, z)$ with $(p_r,p_{\tht},p_z)=(\dot{r},r^2\dot{\tht},\dot{z})$, the above Hamiltonian can be written correspondingly as
\begin{equation*}
	H(p_r,p_{\tht},p_z,r,\tht,z)=\frac{1}{2}(p_{r}^{2}+r^{-2}p_{\tht}^2+p_{z}^{2})-\frac{1}{\sqrt{r^{2}+z^{2}}}.
\end{equation*} 

Notice that $p_{\tht} = r^2 \dot{\tht}$ is the angular momentum with respect to the $z$-axis, which is a first integral, as the system is symmetric with respect to the $z$-axis. By fixing the angular momentum to be $\om$, we get the following reduced Hamiltonian
\begin{equation}\label{eq:spacial kepler Ham2}
	H_{\om}(p_r,p_z,r,z)=\frac{1}{2}(p_{r}^{2}+p_{z}^{2})+\frac{\om^2}{2r^2}-\frac{1}{\sqrt{r^{2}+z^{2}}},
\end{equation}
which is a system with two degrees of freedom. If $h$ is the energy constant, we get the three-dimensional energy surface 
$$ \M(h, \om) = \{ (p_r, p_z, r, z): H_{\om} (p_r, p_z, r, z)= h \}.$$ 
Let $\Pi_{r,z}$ denote the projection to the $(r, z)$-plane, we have Hill's region as follows:
$$ \mathcal{H}(h, \om) = \Pi_{r, z} \M(h, \om)= \{ (r, z): (p_r, p_z, r, z) \in \M(h, \om)\}. $$
\begin{prop}\label{prop;diffeomorphic to S3 1}
	For the reduced Hamiltonian system \eqref{eq:spacial kepler Ham2},  
	\begin{enumerate}
		\item[(a)] if $2h\om^2<-1$, then $\mathcal{M}(h,\om)=\varnothing$; 
		\item[(b)] if $2h\om^2=-1$, then $\mathcal{M}(h,\om)=\{(0,0,\om^2,0)\}$;
		\item[(c)] if $-1<2h\om^2<0$, then $\mathcal{M}(h,\om)$ is a compact manifold diffeomorphic to $\mathbb{S}^{3}$ with
		$$ \min\{ r: (p_r, p_z, r,z ) \in \mathcal{M}(h, \om) \}=\frac{\om^2}{1+\sqrt{1+2h\om^2}}>\frac{\om^2}{2}; $$
		\item[(d)] if $2h\om^2>0$, then $\mathcal{M}(h,\om)$ is unbounded.
	\end{enumerate}
\end{prop}
\begin{proof}
	By a direct computation, 
	\begin{equation*}
		\nabla H_{\om}=\left(p_{r}, p_{z},-\frac{\om^{2}}{r^{3}}+\frac{r}{(r^{2}+z^{2})^{3\slash2}},\frac{z}{(r^{2}+z^{2})^{3\slash2}}\right).
	\end{equation*}
	Then $(0,0,\om^2,0)$ is the unique critical point of $H_{\om}$.
	
	Meanwhile the Hessian matrix at $(0,0,\om^2,0)$ is
	\begin{equation*}
		D^{2}H_{\om}(0,0,\om^2,0)=\left({\begin{array}{cccc}
				1 & 0 & 0 & 0\\
				0 & 1 & 0 & 0\\
				0 & 0 & \frac{1}{\om^6} & 0\\
				0 & 0 & 0 & \frac{1}{\om^6}
		\end{array}}\right),
	\end{equation*}
	which is positive definite. As a result, $ -\frac{1}{2\om^2} =H_{\om}(0,0,\om^2,0)$ is the minimum of $H_{\om}$. Then properties (a) and (b) follow from this immediately.  

	Notice that $H_{\om}$ is increasing with respect to $z^2$ and
	$$ \lim_{z \to \infty}H_\om(p_r,p_z,r,z)=\frac{1}{2}(p_{r}^{2}+p_{z}^{2})+\frac{\om^2}{2r^2}.$$
Therefore $\mathcal{M}(h,\om)$ is bounded in $z$-direction if and only if
$$-\frac{1}{2\om^2}< h< \inf  \left\{ \frac{1}{2}(p_{r}^{2}+p_{z}^{2})+\frac{\om^2}{2r^2} \right\}=0.$$
Then $\mathcal{M}(h,\om)$ is unbounded when $2h\om^2>0$. This proves property (d). 

For property (c), we only need to show $\mathcal{M}(h,\om)$ is bounded in $r$-direction, when $-1<2h\om^2<0$. Consider the boundary of $\mathcal{H}(h,\om)$: 
\begin{equation}
	V_{\om}(r,z)=\frac{\om^{2}}{2r^{2}}-\frac{1}{\sqrt{r^{2}+z^{2}}}=h.
\end{equation}
Because $V_{\om}(r,z)$ is increasing with respect to $z^2$, so we only need to consider $z=0$. Solving the equation
\begin{equation}
	\frac{\om^{2}}{2r^{2}}-\frac{1}{r}=h,
\end{equation}
we have $r^{\pm}=\frac{\om^2}{1\pm\sqrt{1+2h\om^2}}>\frac{\om^2}{2}$. So we have $\Pi_{r}\mathcal{H}(h,\om)=[r^+,r^-]$, which is bounded. Therefore $\mathcal{M}(h,\om)$ is compact. Because the unique critical point $(0,0,\om^2,0)$ of $H_\om$ is nondegenerate, when $h$ is slightly greater than $-\frac{1}{2\om^2}$, the energy surface $\mathcal{M}(h,\om)$ is diffeomorphic to $\mathbb{S}^3$. When $-1<2h\om^2<0$, the regular energy surfaces $\mathcal{M}(h,\om)$ are diffeomorphic to one another. We can also use the Reeb sphere theorem to prove $\mathcal{M}(h,\om)$ is diffeomorphic to $\mathbb{S}^{3}$. The Morse function $f(p_r,p_z,r,z)=r$ has exactly two critical points $(0,0,r^+,0)$ and $(0,0,r^-,0)$ on $\mathcal{M}(h,\om)$. Combining together with the uniqueness of the smooth structure on $\mathcal{M}(h,\om)$, $\mathcal{M}(h,\om)$ is diffeomorphic to $\mathbb{S}^{3}$.
	
\end{proof}

For simplicity, we denote $\mathcal{M}(h,\om)$ and $\mathcal{H}(h,\om)$ by $\mathcal{M}$ and $\mathcal{H}$ respectively below, when there is no confusion. Without loss of generality, let's assume $\om>0$ in the rest of this section. 

The Hamiltonian vector field $X_{H_\om}$ is determined by $\omega(\cdot,X_{H_\om})=\mathrm{d}H_\om$, where $\omega=\mathrm{d}p_{r}\wedge\mathrm{d}r+\mathrm{d}p_{z}\wedge\mathrm{d}z$ is the symplectic form. Since $H_\om$ is a mechanical Hamiltonian, the hypersurface $\M$ is of contact type. That is, there exists a 1-form $\lambda$ on $\M$ satisfying $\lambda\wedge\mathrm{d}\lambda\neq0$, so that its Reeb vector field $R$, determined by $\mathrm{d}\lambda(R,\cdot)\equiv 0$ and $\lambda(R)=1$, is parallel to $X_{H_\om}$ and $\mathrm{d}\lambda=\omega$. 

From now on we will focus on the cases with $2h\om^2 \in (-1, 0)$. By Proposition \ref{prop;diffeomorphic to S3 1}, $\mathcal{M}$ is a compact manifold diffeomorphic to $\mathbb{S}^{3}$. Following the definitions in \cite{CHHL23}, we define its contact volume as 
\begin{equation}
\text{Vol}(\mathcal{M})=\int_{\mathcal{M}}\lambda\wedge\mathrm{d}\lambda,
\end{equation}
and using Stokes' formula 
\begin{equation} \label{eq:the expression of contact volume}
	\text{Vol}(\mathcal{M}) =\int_{S}\mathrm{d}\lambda\wedge\mathrm{d}\lambda=2\int_{S}\mathrm{d}p_{r}\wedge\mathrm{d}r\wedge\mathrm{d}p_{z}\wedge\mathrm{d}z,
\end{equation}
where $S$ is the region bounded by $\M$, i.e.,  $\partial S=\mathcal{M}$. 

For any periodic solution $\xi(t)$ of the reduced Kepler problem \eqref{eq:spacial kepler Ham2}, define its action value as 
\begin{equation} 
	A(\xi(t))=\int_{\xi(t)}\lambda.
\end{equation}
For the given energy $h$ and angular momentum $\om$, we can find a unique solution $\xi_0(t)$ of the reduced Hamiltonian system \eqref{eq:spacial kepler Ham2} satisfying 
\begin{equation} \label{eq;r-tht}
	r(t)=\frac{\om^{2}}{1+e\cos\theta(t)}, \text{ where } e=\sqrt{1+2h\om^{2}}.
\end{equation}
It corresponds to an elliptic orbit of the Kepler problem with eccentricity $e$ and entirely contained in the $(x,y)$-plane. 
As a result, its action value is
\begin{equation} \label{eq:the expression of action}
	A(\xi_0(t))=\int_{\xi_0(t)}\lambda =\int_{\Upsilon}\mathrm{d}\lambda=\int_{\Upsilon}\mathrm{d}p_{r}\wedge\mathrm{d}r,
\end{equation}
where $\Upsilon$ is the region contained in the $(p_r, r)$-plane and bounded by $\xi_0(t)$, i.e., $\partial \Upsilon=\xi_0(t)$.

\begin{lemma}\label{lemma:kepler equal}
	 When $-1<2h\om^2<0$, $\text{Vol}(\mathcal{M})=A^{2}(\xi_0(t))$.
\end{lemma}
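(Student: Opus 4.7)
The plan is to evaluate both sides of $\text{Vol}(\mathcal{M}) = A^{2}(\xi_{0}(t))$ explicitly in Delaunay action-angle coordinates for the Kepler problem, under which $A(\xi_{0})$ becomes a classical trigonometric integral and $\text{Vol}(\mathcal{M})$ reduces to integration over a rectangular box.

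For $A(\xi_{0})$ I would use the ellipse parametrization $r(\theta) = \om^{2}/(1 + e\cos\theta)$ from \eqref{eq;r-tht} with $e = \sqrt{1 + 2h\om^{2}}$, derive $p_{r} = \dot r = e\sin\theta/\om$ via angular-momentum conservation $r^{2}\dot\theta = \om$, and evaluate
$\oint p_{r}\,\mathrm{d}r = \om e^{2}\int_{0}^{2\pi}\sin^{2}\theta/(1 + e\cos\theta)^{2}\,\mathrm{d}\theta$. This elementary integral gives $A(\xi_{0}) = 2\pi(L_{h} - \om)$, where $L_{h} := 1/\sqrt{-2h}$; this is the classical formula $2\pi(L - G)$ for the radial action of a Kepler orbit, with $L = L_{h}$ fixed by the energy and $G = \om$ because $\xi_{0}$ is planar.

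For $\text{Vol}(\mathcal{M})$ I would exploit the fact that the Delaunay transformation, combined with the symplectic reduction by azimuthal rotation at $p_{\theta} = \om$, identifies the 4D reduced phase space $(p_{r}, r, p_{z}, z)$ with symplectic form $\mathrm{d}p_{r}\wedge\mathrm{d}r + \mathrm{d}p_{z}\wedge\mathrm{d}z$ with the action-angle domain $\{(L, G, \ell, g) : L \geq G \geq \om,\ \ell, g \in \mathbb{R}/2\pi\mathbb{Z}\}$ carrying $\mathrm{d}L\wedge\mathrm{d}\ell + \mathrm{d}G\wedge\mathrm{d}g$. Since the Kepler Hamiltonian in these variables is $-1/(2L^{2})$, the bounded region $S = \{H_{\om} \leq h\}$ corresponds to the box $\{\om \leq G \leq L \leq L_{h}\} \times (\mathbb{R}/2\pi\mathbb{Z})^{2}$. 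Invariance of the symplectic volume form under this canonical change of coordinates then yields
\begin{equation*}
\text{Vol}(\mathcal{M}) = 2\int_{S}\mathrm{d}p_{r}\,\mathrm{d}r\,\mathrm{d}p_{z}\,\mathrm{d}z = 2(2\pi)^{2}\int_{\om}^{L_{h}}(L - \om)\,\mathrm{d}L = 4\pi^{2}(L_{h} - \om)^{2} = A^{2}(\xi_{0}).
\end{equation*}

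The main obstacle is justifying the Delaunay transformation as a symplectomorphism on the reduced phase space, since these coordinates degenerate along the codimension-one stratum of planar orbits ($G = \om$, which contains $\xi_{0}$ itself). This stratum has measure zero and therefore does not affect the four-dimensional volume integral, but to sidestep any technicality one may instead compute $\text{Vol}(\mathcal{M}) = 4\pi\int_{\mathcal{H}}(h - V_{\om})\,\mathrm{d}r\,\mathrm{d}z$ directly in polar coordinates $(r, z) = (\rho\cos\varphi, \rho\sin\varphi)$: the Hill region $\mathcal{H}$ becomes the sublevel set of $2h\rho^{2}\cos^{2}\varphi + 2\rho\cos^{2}\varphi - \om^{2} \geq 0$, and the substitution $\sin\varphi = e\sin\alpha$ reduces the two-variable integral to the elementary ones $\int_{0}^{\pi/2}\mathrm{d}\alpha/(1 - e^{2}\sin^{2}\alpha) = \pi/(2\sqrt{1 - e^{2}})$ and its variants, with the seemingly awkward logarithmic contributions canceling exactly via the identity $1 - e^{2} = -2h\om^{2}$.
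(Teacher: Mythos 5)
Your proposal is correct, and your primary route via Delaunay variables is genuinely different from the paper's argument. The paper computes both sides directly: it evaluates $A(\xi_0)=2\pi\om\bigl(\tfrac{1}{\sqrt{1-e^2}}-1\bigr)$ from the orbit parametrization, then obtains $\text{Vol}(\mathcal{M})$ by pushing the four-dimensional integral down to the Hill region, passing to polar coordinates $(\rho,\varphi)$, splitting into two integrals $I_1, I_2$, and reducing each via the substitution $\sin\varphi = e\sin\theta$ to the elementary formulas \eqref{formula 1}--\eqref{formula 2}. Your approach instead exploits the classical integrability of Kepler: after symplectic reduction at $p_\theta=\om$, the reduced phase space restricted to the bounded region $\{H_\om\le h\}$ is symplectomorphic, away from a measure-zero degeneracy stratum, to the box $\{\om\le G\le L\le L_h\}\times\mathbb{T}^2$ with form $\mathrm{d}L\wedge\mathrm{d}\ell+\mathrm{d}G\wedge\mathrm{d}g$, whence $\text{Vol}(\mathcal{M}) = 2(2\pi)^2\int_\om^{L_h}(L-\om)\,\mathrm{d}L = 4\pi^2(L_h-\om)^2$; your formulas $A(\xi_0)=2\pi(L_h-\om)$ and $L_h=1/\sqrt{-2h}$ indeed agree with the paper's via $1-e^2 = -2h\om^2$. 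What your approach buys is conceptual transparency: the identity $\text{Vol}=A^2$ is revealed as a triangle-area identity in the Delaunay box, with no trigonometric integrals to grind out. What the paper's approach buys is self-containedness: it never needs to invoke the Delaunay transformation or argue around its degeneracy locus, and it generalizes more mechanically to perturbations (which is exactly what Lemma \ref{lemma:the derivative of action and contact volume} needs). One minor inaccuracy in your fallback sketch: the paper's polar-coordinate computation, using the substitution $\sin\varphi = e\sin\theta$, produces no logarithmic terms at all, so no such cancellation needs to be checked.
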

\begin{proof}
	Notice that $\Upsilon$, $S$ and the Hill's region $\mathcal{H}$ can be given as
	\begin{equation}
		\begin{aligned}
			&\Upsilon=\left\{(p_{r},r)|\frac{1}{2}p_{r}^{2}+\frac{\om^{2}}{2r^{2}}-\frac{1}{r}\leq h \right\},\\
			&S=\left\{(p_{r},p_{z},r ,z)|\frac{1}{2}(p_{r}^{2}+p_{z}^{2})+\frac{\om^{2}}{2r^{2}}-\frac{1}{\sqrt{r^{2}+z^{2}}}\leq h \right\},\\
			&\mathcal{H}=\left\{(r,z)|\frac{\om^{2}}{2r^{2}}-\frac{1}{\sqrt{r^{2}+z^{2}}}\leq h \right\}.
		\end{aligned}
	\end{equation}
	Recall that $r^{\pm}$ are the roots of $\frac{\om^{2}}{2r^{2}}-\frac{1}{r}=h$. Using \eqref{eq;r-tht}, we can get
	\begin{equation}\label{eq:action of Kepler}
		\begin{aligned}
			A(\xi_{0}(t))&=\int_{\Upsilon}\mathrm{d}p_{r}\wedge\mathrm{d}r=2\int_{r^{+}}^{r^{-}}\sqrt{2h-\frac{\om^{2}}{r^{2}}+\frac{2}{r}}\mathrm{d}r\\
			&=2\int_{0}^{\pi}\frac{e\sin\theta}{\om}\cdot\frac{\om^{2}e\sin\theta}{(1+e\cos\theta)^{2}}\mathrm{d}\theta=2e^{2}\om\int_{0}^{\pi}\frac{\sin^{2}\theta}{(1+e\cos\theta)^{2}}\mathrm{d}\theta\\
			&=2\om\left(\int_{0}^{\pi}\frac{1}{1+e\cos\theta}\mathrm{d}\theta-\pi\right)=2\pi\om\left(\frac{1}{\sqrt{1-e^{2}}}-1\right).
		\end{aligned}
	\end{equation}

	Meanwhile under polar coordinates $(r,z)=(\rho\cos\varphi,\rho\sin\varphi)$, Hill's region $\mathcal{H}$ can be given as
    $$\mathcal{H}=\{(\rho,\varphi)|\rho^-(\varphi)\leq\rho\leq\rho^+(\varphi), \varphi^-\leq\varphi\leq\varphi^+\}=\{(\rho,\varphi)|\varphi^-(\rho)\leq\varphi\leq\varphi^+(\rho), \rho^-\leq\rho\leq\rho^+\},$$
    where 
	$$\begin{aligned}
		&\rho^{\pm}(\varphi)=\frac{1\pm\sqrt{1+2h\om^2/\cos^2\varphi}}{-2h},\qquad \quad \varphi^{\pm}=\pm\arccos\sqrt{-2h\om^2},\\
		&\varphi^{\pm}(\rho)=\pm\arctan\sqrt{\frac{2h\rho^2+2\rho-\om^{2}}{\om^2}},\quad \rho^{\pm}=\frac{1\pm\sqrt{1+2h\om^2}}{-2h}.
	\end{aligned}$$
    Using this we can compute the contact volume of $\mathcal{M}$ as below. 
	\begin{equation}
		\begin{aligned}
			\text{Vol}(\mathcal{M})&=2\int_{S}\mathrm{d}p_{r}\wedge\mathrm{d}r\wedge\mathrm{d}p_{z}\wedge\mathrm{d}z\\
			&=2\pi\iint_{\mathcal{H}}2h-\frac{\om^{2}}{r^{2}}+\frac{2}{\sqrt{r^{2}+z^{2}}}\ \mathrm{d}r\mathrm{d}z\\
			&=2\pi\iint_{\mathcal{H}}2h\rho-\frac{\om^{2}}{\rho\cos^{2}\varphi}+2\ \mathrm{d}\rho\mathrm{d}\varphi\\
			&=2\pi\left(\int_{\varphi^-}^{\varphi^+}\int_{\rho^-(\varphi)}^{\rho^+(\varphi)}2h\rho+2\ \mathrm{d}\rho \mathrm{d}\varphi-\int_{\rho^-}^{\rho^+}\int_{\varphi^-(\rho)}^{\varphi^+(\rho)}\frac{\om^2}{\rho\cos^2\varphi} \mathrm{d}\varphi \mathrm{d}\rho\right)\\
			&=2\pi\left(I_{1}-I_{2}\right)
		\end{aligned}
	\end{equation}
	To compute $I_1$ and $I_2$, we need the following formulas, when $a\in(0,1)$,
	\begin{equation}\label{formula 1}
		\begin{aligned}
			\int_{-\pi\slash2}^{\pi\slash2}\frac{\cos^2\theta}{\cos^2\theta+a\sin^{2}\theta}\mathrm{d} \theta&=\int_{-\pi\slash2}^{\pi\slash2}\frac{1}{1+a\tan^{2}\theta}\mathrm{d} \theta=\int_{-\infty}^{+\infty}\frac{1}{1+at^2}\cdot\frac{1}{1+t^2}\mathrm{d}t\\
			&=\int_{-\infty}^{+\infty}-\frac{a}{1-a}\cdot\frac{1}{1+at^2}+\frac{1}{1-a}\cdot\frac{1}{1+t^2}\mathrm{d}t\\
			&=\left(-\frac{\sqrt{a}}{1-a}\arctan\sqrt{a}t+\frac{1}{1-a}\arctan t \right)\bigg|^{+\infty}_{-\infty}=\frac{\pi}{1+\sqrt{a}};
		\end{aligned}
	\end{equation}
	\begin{equation} \label{formula 2} \begin{aligned}
	\int_{-\pi\slash2}^{\pi\slash2}\frac{\cos^2\theta}{1+a\sin\theta}\mathrm{d} \theta&=\int_{-\pi\slash2}^{\pi\slash2}\frac{\cos^2\theta}{1-a^2\sin^{2}\theta}\mathrm{d} \theta-\int_{-\pi\slash2}^{\pi\slash2}\frac{a\cos^2\theta\sin\theta}{1-a^2\sin^{2}\theta}\mathrm{d} \theta\\
			&=\int_{-\pi\slash2}^{\pi\slash2}\frac{\cos^2\theta}{\cos^{2}\theta+(1-a^2)\sin^{2}\theta}\mathrm{d} \theta=\frac{\pi}{1+\sqrt{1-a^2}}.
	\end{aligned}		
	\end{equation}

	Since $\cos^2\varphi^+=-2h\om^2=1-e^2$ and $\sin^2\varphi^+=1+2h\om^2=e^2$, we choose $\sin\varphi=e\sin\theta$, and we conclude that $\cos\varphi\mathrm{d}\varphi=e\cos\theta\mathrm{d}\theta$. We compute that
	$$
	\begin{aligned}
		I_1&=\int_{\varphi^-}^{\varphi^+}\int_{\rho^-(\varphi)}^{\rho^+(\varphi)}2h\rho+2 \mathrm{d}\rho \mathrm{d}\varphi=\frac{1}{-h}\int_{\varphi^-}^{\varphi^+}\left(1+\frac{2h\om^{2}}{\cos^2\varphi}\right)^{1/2}\mathrm{d} \varphi\\
		&=\frac{1}{-h}\int_{\varphi^-}^{\varphi^+}\frac{(e^2-\sin^{2}\varphi)^{1/2}}{\cos^2\varphi}\mathrm{d}\sin \varphi=\frac{e^2}{-h}\int_{-\pi/2}^{\pi/2}\frac{\cos^{2}\theta}{1-e^2\sin^2\theta}\mathrm{d} \theta\\
		&=\frac{e^2}{-h}\int_{-\pi\slash2}^{\pi\slash2}\frac{\cos^2\theta}{\cos^2\theta+(1-e^2)\sin^{2}\theta}\mathrm{d} \theta=\frac{e^2}{-h}\cdot\frac{\pi}{1+\sqrt{1-e^2}}\\
		&=\frac{2\pi\om^2}{\sqrt{1-e^2}}\cdot\left(\frac{1}{\sqrt{1-e^{2}}}-1\right). 
	\end{aligned}$$
	For $I_2$, we choose $u=\rho+1/(2h)=u^+\sin\theta$, we have $u^\pm=\rho^\pm+1/(2h)=\pm e/(-2h)$ and we compute that 
	$$
	\begin{aligned}
		I_2&=\int_{\rho^-}^{\rho^+}\int_{\varphi_-(\rho)}^{\varphi^+(\rho)}\frac{\om^2}{\rho\cos^2\varphi} \mathrm{d}\varphi \mathrm{d}\rho=2\om\int_{\rho^-}^{\rho^+}\frac{\sqrt{2h\rho^2+2\rho-\om^{2}}}{\rho}\mathrm{d} \rho\\    &=2\sqrt{1-e^2}\int_{u^-}^{u^+}\frac{\sqrt{e^2/(4h^2)-u^{2}}}{u+1/(-2h)}\mathrm{d} u=\sqrt{1-e^2}\cdot\frac{e^2}{-h}\int_{-\pi\slash2}^{\pi\slash2}\frac{\cos^2\theta}{1+e\sin\theta}\mathrm{d} \theta\\
		&=2\pi\om^2\left(\frac{1}{\sqrt{1-e^{2}}}-1\right).
	\end{aligned}$$
	The above computation implies $\text{Vol}(\mathcal{M}) =A^2(\xi_0(t))$. 
\end{proof}
We now study some special orbits, including brake orbits and $z$-symmetric orbits. A definition of a global surface of section is required.
\begin{defi}
	Let $X$ be a non-vanishing vector field on $\mathbb{S}^{3}$ with flow $\varphi_{t}^{X}$. A disk-like global surface of section is an embedded closed 2-dimensional disk $D\subset\mathbb{S}^{3}$ satisfying
	\begin{enumerate}
		\item[(1)] $X$ is tangent to $\partial D$, the boundary of $D$;
		\item[(2)] $X$ is transverse to $D^{\circ}$, the interior of $D$;
		\item[(3)] for any $x\in\mathbb{S}^{3}\setminus\partial D$, there exist $t^{+}>0$ and $t^{-}<0$ such that both $\varphi_{t^{+}}^{X}(x)$ and $\varphi_{t^{-}}^{X}(x)$ are contained in the interior of $D$.
	\end{enumerate}
\end{defi} 
\begin{lemma}\label{lemma:global serface of section 1}
	When $2h \om^2 \in (-1, 0)$, $\Sigma_{\pm}$ given below are two disk-like global surfaces of section of the Hamiltonian flow $\varphi_{t}$ corresponding to the Hamiltonian vector field $X_{H_{\om}}$,
	\begin{equation*}
	\begin{aligned}
		&\Sigma_{+}=\{(p_{r},p_{z},r,z)\in\mathcal{M}:z=0,p_{z}\geq0\},\\
		&\Sigma_{-}=\{(p_{r},p_{z},r,z)\in\mathcal{M}:z=0,p_{z}\leq0\}.
	\end{aligned}
\end{equation*}
\end{lemma}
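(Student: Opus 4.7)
The strategy is to verify the three defining properties of a disk-like global surface of section for $\Sigma_+$; the analogue for $\Sigma_-$ then follows from the symplectic $\zz_2$-symmetry $(p_z,z)\mapsto(-p_z,-z)$ of $H_\om$, which preserves the flow and interchanges $\Sigma_+$ and $\Sigma_-$.

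For the topology, I would note that $H_\om = h$ together with $z=0$ gives, for each $r\in[r^+,r^-]$, a circle $p_r^2+p_z^2=2h+2/r-\om^2/r^2$ which degenerates to a single point at each endpoint. Hence $\{z=0\}\cap\M$ is a topological 2-sphere, and imposing $p_z\ge 0$ cuts out a closed 2-disk whose boundary $\partial\Sigma_+ = \{z=0,\,p_z=0\}\cap\M$ coincides with the planar Kepler orbit $\xi_0(t)$ of \eqref{eq;r-tht}. Since $\dot z = p_z$ and $\dot p_z = -z(r^2+z^2)^{-3/2}$, both components vanish along $\partial\Sigma_+$, so $X_{H_\om}$ is tangent to the boundary; on the interior $p_z>0$ gives $\dot z>0$, hence transversality to $\Sigma_+^\circ$.

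The heart of the proof is the global return property. I would establish it via the three-dimensional Kepler geometry: restoring the cyclic coordinate $\theta$, every solution of \eqref{eq:spacial kepler Ham2} lifts to a Kepler orbit in $\rr^3$ whose total angular momentum vector $L$ satisfies $L_z=\om$. Because $-1<2h\om^2<0$, this lifted orbit is a bounded ellipse lying in the plane $L^\perp$ through the origin, with the origin as a focus. If $|L|=|\om|$ the orbit lies in $\{z=0\}$ and projects onto $\partial\Sigma_\pm$; otherwise $L^\perp$ is strictly tilted and meets $\{z=0\}$ in a line through the origin, and since this line passes through a focus of the ellipse it intersects the ellipse at exactly two points, the ascending and descending nodes, having $p_z>0$ and $p_z<0$ respectively. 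By periodicity of bounded Kepler motion, the reduced orbit therefore crosses $\Sigma_+^\circ$ exactly once per period and returns infinitely often in both forward and backward time.

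I expect this last step to be the main obstacle, as it requires identifying the reduced flow with the $\theta$-quotient of a genuine three-dimensional Kepler ellipse sharing its focus with the origin. A self-contained alternative is to introduce the auxiliary conserved quantity $K = \om^2 z^2/r^2 + (rp_z - zp_r)^2 \ge 0$, which one checks to Poisson-commute with $H_\om$ and to equal $|L|^2-\om^2$ under the lift, observe that $K=0$ cuts out exactly $\partial\Sigma_\pm$, and use compactness of $\M$ together with periodicity on each level $\{K>0\}$ to rule out $z$ having constant sign along any non-boundary orbit.
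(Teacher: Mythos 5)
Your proof is correct, but it takes a genuinely different route from the paper. For the global return property, you exploit the explicit integrability of the Kepler problem: lifting the reduced orbit to a full three-dimensional Kepler ellipse with $L_z=\om$, observing that the orbit plane $L^\perp$ meets $\{z=0\}$ in the line of nodes through the focus, and concluding that each non-planar orbit crosses $\Sigma_+^\circ$ and $\Sigma_-^\circ$ exactly once per Kepler period. The auxiliary invariant $K=\om^2 z^2/r^2+(rp_z-zp_r)^2$ you propose does indeed equal $|L|^2-\om^2$ and Poisson-commutes with $H_\om$, and its zero set is precisely $\partial\Sigma_\pm$, so that self-contained variant also works. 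The paper instead gives a purely dynamical argument that does not invoke the closed-form Kepler solution: it assumes $z(t)$ never turns around, passes to the $\omega$-limit set on the compact energy surface, derives a contradiction from $\ddot z<0$ when $z>0$, and then uses a Sturm--Picone comparison with $\ddot z<-cz$ to force a return to $\{z=0\}$ in finite time. The trade-off is clear: your geometric argument is shorter and gives an exact count of crossings per period, but it is tied to the Kepler integrability; the paper's $\omega$-limit-set argument is slightly longer but is stable under perturbation, which is exactly what is needed when this lemma is redeployed verbatim for the perturbed system in Lemma~\ref{lemma:global serface of section}. If you keep your proof you would need a separate argument in the perturbed case, whereas the paper's approach handles both at once.
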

\begin{proof}
	The boundary $\partial \Sigma_{\pm }=\{z=0,p_{z}=0\}\cap \mathcal{M}(h,\om)$ coincide with the orbit $ \xi_0(t) $, which is tangent to $ X_{H_\om} $.
	
	For $\xi=(p_{r},p_{z},r,z)\in\Sigma_{+}^{\circ}$, the last component of $X_{H_{\om}}(\xi)=J\nabla H_{\om}(\xi)$ is $ p_z > 0 $, ensuring transversality to $ \Sigma_+^\circ $. 
	
	Consider a trajectory $\xi(t)=(p_{r}(t),p_{z}(t),r(t),z(t))$ starting from $\Sigma_{+}^{\circ}$. Suppose no $ t_1 > 0 $ exists with $ p_z(t_1) = 0 $ and $ z(t_1) > 0 $. Then $ z(t) $ monotonically increases to some $ z_0 > 0 $, and the $ \omega $-limit set of $ \xi(t) $ lies in $\{z=z_{0}\}\cap \mathcal{M}$. By the invariance of the $ \omega $-limit set under $\varphi_{t}$, there exists a solution $\hat{\xi}(t)=(\hat{p}_{r}(t),\hat{p}_{z}(t),\hat{r}(t),\hat{z}(t))$ with $\hat{z}(t)=z_{0}>0$ and $\hat{p}_{z}(t)=\dot{\hat{z}}(t)=0$. However,  
	\begin{equation*}
		\dot{\hat{p}}_{z}(t)=\ddot{\hat{z}}(t)=-\frac{\hat{z}(t)}{(\hat{r}^{2}(t)+\hat{z}^{2}(t))^{3\slash 2}}<0,
	\end{equation*}
	yielding a contradiction. Thus, $ t_1 $ exists. 
	Since $\ddot{z}(t)<-cz(t)$ for some $c>0$, by the Sturm-Picone Comparison Theorem, there exists $t_{+}>t_{1}$ with $ p_z(t_+) > 0 $ and $ z(t_+) = 0 $, proving $ \Sigma_+ $ is a global surface of section. The argument for $ \Sigma_- $ is analogous.
\end{proof}

Let  $\mathcal{B}_{+}=\{(r,z)\in\partial\mathcal{H}:z>0\}$ and $\mathcal{B}_{-}=\{(r,z)\in\partial\mathcal{H}:z<0\}$ denote the upper and lower Hill’s boundaries. For a solution $\xi(t)=(p_{r}(t),p_{z}(t),r(t),z(t))$ with $\Pi_{r,z}\xi(0)\in\mathcal{B}_{+}$, Lemma \ref{lemma:global serface of section 1} guarantees a minimal $t_{1}>0$ such that $\xi(t_{1})\in\Sigma_-$.
 
Expressing $\xi(t)$ in the original Hamiltonian system \eqref{eq:spacial kepler Ham1} via spherical coordinates $ (\rho, \theta, \varphi) $, we have  
\begin{equation}\label{eq:Kepler explicitly}
	\rho(t)=\frac{|\Omega|^{2}}{1+\hat{e}\cos{(\hat{\theta}(t)+g)}},
\end{equation}   
where $\Omega=q\times\dot{q}=(\om_{1},\om_{2},\om_{3})\in\mathbb{R}^{3}$ is the angular momentum vector, $\hat{e}=\sqrt{1+2h|\Omega|^{2}}$ is the eccentricity, $\hat{\theta}(t)$ which satisfies $\rho^2(t)\mathrm{d}\hat{\theta}/\mathrm{d}t=|\Omega|$ and $\hat{\theta}(0)=0$ is the true anomaly and $g$ is the argument of perigee. The third component $ \om = \om_3 $ corresponds to the angular momentum about the $ z $-axis.

For $\Pi_{r,z}\xi(0)\in\mathcal{B}_{+}$, the condition $\dot{\rho}(0)=0$ implies $ g \in \{0, \pi\} $. \\ 
(a) If $g=0$ and $\hat{\theta}(t_2)=\pi$ for some $t_2>t_1$, then
$\Pi_{r,z}\xi(t_2)\in\mathcal{B}_{-}$. Here, $ \xi(t) $ moves from pericenter to apocenter.\\  
(b) If $g=\pi$ and $\hat{\theta}(t_2)=\pi$ for some $t_2>t_1$, then $\Pi_{r,z}\xi(t_2)\in\mathcal{B}_{-}$. Here, $ \xi(t) $ moves from apocenter to pericenter. 
	
In both cases, $p_z(t)<0$ for $t\in(0,t_2)$, and the plane formed by the major axis direction of the elliptic orbit and angular momentum direction contains $z$-axis. When $\xi(t)$ intersects the $r$-axis (i.e., at $t=t_1$), $ \hat{\theta}(t_1) = \pi/2 $. Thus, every trajectory starting from Hill’s boundaries is a brake orbit.
	
For a solution $\bar{\xi}(t)=(p_{r}(t),p_{z}(t),r(t),z(t))$ with $\bar{\xi}(0)\in\Sigma_+\cap\{p_r=0\}$, Lemma \ref{lemma:global serface of section 1} ensures a minimal $t_{1}>0$ such that $\bar{\xi}(t_{1})\in\Sigma_-$. Since $\bar{\xi}(t)$ is an elliptic orbit, symmetry implies that its major axis lies on the $(x,y)$-plane and $\bar{\xi}(t_1)\in\Sigma_-\cap\{p_r=0\}$. Consequently, $\bar{\xi}(t)$ is a $z$-symmetric periodic orbit.
	
\begin{lemma}\label{lemma:relationship pr r}
	For a brake orbit $\xi(t) = (p_{r}(t), p_{z}(t), r(t), z(t))$ with $\Pi_{r,z}\xi(0)\in\mathcal{B}_+$, we have
	$$
	p_{r}(t_1) = \frac{\om^2 + 2hr^2(0)}{2\om r(0)}.
	$$
\end{lemma}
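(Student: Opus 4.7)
The plan is to derive the identity directly from the two conservation laws, energy and total angular momentum $|\Omega|^2$, together with the geometric fact (already established in the paragraph preceding the lemma) that at $t=t_1$ the true anomaly satisfies $\hat\theta(t_1)+g=\pi/2$ with $g\in\{0,\pi\}$.

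First I would evaluate $|\Omega|^2$ at the brake time $t=0$. Since $p_r(0)=p_z(0)=0$ the velocity is purely azimuthal and orthogonal to $q(0)$, so $|\Omega|^2=|q(0)|^2|\dot q(0)|^2=\om^2\rho^2(0)/r^2(0)$ with $\rho^2(0)=r^2(0)+z^2(0)$. The Hill-boundary identity $\om^2/(2r^2(0))-1/\rho(0)=h$ rearranges to $\rho(0)=2r^2(0)/(\om^2-2hr^2(0))$, hence
\begin{equation*}
|\Omega|^2=\frac{4\om^2 r^2(0)}{(\om^2-2hr^2(0))^2}.
\end{equation*}
Second, I would feed $\psi(t_1):=\hat\theta(t_1)+g=\pi/2+g$ with $g\in\{0,\pi\}$ into the Kepler formula \eqref{eq:Kepler explicitly}: since $\cos\psi(t_1)=0$, $\rho(t_1)=|\Omega|^2$, and combined with $z(t_1)=0$ this gives the key identity $r(t_1)=|\Omega|^2$.

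Third, I would use the cylindrical expression $|\Omega|^2=(r^2+z^2)(p_r^2+\om^2/r^2+p_z^2)-(rp_r+zp_z)^2$, which at $z(t_1)=0$ collapses to $|\Omega|^2=\om^2+r^2(t_1)p_z^2(t_1)$, so $p_z^2(t_1)=(|\Omega|^2-\om^2)/|\Omega|^4$. Inserting this into the energy equation $p_r^2+p_z^2+\om^2/r^2-2/r=2h$ at $t_1$ (again using $r(t_1)=|\Omega|^2$) eliminates $p_z$ and simplifies everything to $p_r^2(t_1)=2h+1/|\Omega|^2$. Substituting the explicit $|\Omega|^2$ from step one, the numerator $8h\om^2 r^2(0)+(\om^2-2hr^2(0))^2$ is the perfect square $(\om^2+2hr^2(0))^2$, so $p_r^2(t_1)=(\om^2+2hr^2(0))^2/(4\om^2 r^2(0))$.

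The only remaining subtlety, and the step I expect to require the most care, is fixing the sign. Differentiating $\rho(1+\hat e\cos\psi)=|\Omega|^2$ and using $\rho^2\dot\psi=|\Omega|$ yields $\dot\rho=\hat e\sin\psi/|\Omega|$, so $\dot r(t_1)=\dot\rho(t_1)=\hat e\cos(g)/|\Omega|$. In case (a) ($g=0$) the orbit starts at pericenter, where a short computation with $\hat e=\sqrt{1+2h|\Omega|^2}$ shows $\rho(0)<1/(-2h)$ and hence $\om^2+2hr^2(0)>0$, matching the positive value of $\dot\rho(t_1)$; in case (b) ($g=\pi$) the opposite inequalities hold at apocenter. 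In both cases the signs agree, giving $p_r(t_1)=(\om^2+2hr^2(0))/(2\om r(0))$ as claimed.
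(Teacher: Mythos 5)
Your proof is correct and follows essentially the same route as the paper: both use conservation of energy and total angular momentum together with the Kepler-orbit fact $r(t_1)=\rho(t_1)=|\Omega|^2$, the Hill-boundary expression $|\Omega|^2=4\om^2 r^2(0)/(\om^2-2hr^2(0))^2$, and energy conservation at $t_1$ to get $p_r^2(t_1)=2h+1/|\Omega|^2$, which simplifies to the perfect square $(\om^2+2hr^2(0))^2/(4\om^2 r^2(0))$. The computational details differ only cosmetically (you get $|\Omega|^2=\om^2\rho^2(0)/r^2(0)$ directly from the purely azimuthal velocity and eliminate $p_z(t_1)$ via the Lagrange identity $|\Omega|^2=|q|^2|\dot q|^2-(q\cdot\dot q)^2$, rather than passing through the spherical-coordinate identity and the planar Kepler Hamiltonian), but your sign analysis is genuinely more complete: the paper only remarks that $\tilde\rho$ increases near pericenter, which settles case $g=0$, whereas you verify that $\om^2+2hr^2(0)$ is positive at pericenter and negative at apocenter, matching $\dot\rho(t_1)=\hat{e}\cos g/|\Omega|$ in both cases, so the signed identity holds without a case being left implicit.
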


\begin{proof}
	Interpreting $\xi(t)$ as a solution of the planar Kepler problem,  
	\begin{equation}
		H(p_{\rho}, p_{\hat{\theta}}, \rho, \hat{\theta}) = \frac{1}{2} \left( p_{\rho}^2 + \frac{p_{\hat{\theta}}^2}{\rho^2} \right) - \frac{1}{\rho}, \label{eq:energy 1}
	\end{equation}
	and in spherical coordinates $(\rho, \theta, \varphi)$,  
	\begin{equation}
		H(p_{\rho}, p_{\theta}, p_{\varphi}, \rho, \theta, \varphi) = \frac{1}{2} \left( p_{\rho}^2 + \frac{p_{\theta}^2}{\rho^2 \cos^2 \varphi} + \frac{p_{\varphi}^2}{\rho^2} \right) - \frac{1}{\rho}, \label{eq:energy 2}
	\end{equation}
	angular momentum conservation yields  
	\begin{equation}
		|\Omega| = |p_{\hat{\theta}}|, \quad \om = p_{\theta}, \label{eq:angular momentum conservation}
	\end{equation}
	where $\Omega$ is the total angular momentum vector. Energy conservation implies  
	\begin{equation}
		\om^2 \cos^{-2}\varphi + p_{\varphi}^2 = |\Omega|^2. \label{eq:relationship of omega}
	\end{equation}
	Let $\xi(0) = (p_r, p_z, r, z)$ and $\xi(t_1) = (\tilde{p}_r, \tilde{p}_z, \tilde{r}, \tilde{z})$. Obviously, $p_r=p_z=\tilde{z}=0$. At Hill’s boundary, the condition  
	$$
	\frac{\om^2}{2r^2} - \frac{1}{\sqrt{r^2 + z^2}} = h,  
	$$  
	yields  
	$$
	z^2 = r^2 \left( \left( \frac{2r}{\om^2 - 2hr^2} \right)^2 - 1 \right).  
	$$  
	For $(r, z) \in \mathcal{B}_{+}$, express $r = \rho \cos\varphi$ and $z = \rho \sin\varphi$. Then $\dot{\varphi} = 0$, and  
	$$
	\tan^2\varphi = \frac{z^2}{r^2} = \left( \frac{2r}{\om^2 - 2hr^2} \right)^2 - 1.  
	$$  
	From \eqref{eq:relationship of omega} and $p_{\varphi} = \rho^2 \dot{\varphi} = 0$, we obtain  
	\begin{equation}
		\cos^2\varphi = \frac{\om^2}{|\Omega|^2}, \quad \frac{\om}{|\Omega|} = \frac{\om^2 - 2hr^2}{2r}. \label{eq:omega ratio}
	\end{equation}
	For $\xi(t_1)$, also express $\tilde{r} = \tilde{\rho} \cos\tilde{\varphi}$ and $\tilde{z} = \tilde{\rho} \sin\tilde{\varphi}$, $\tilde{z} = 0$ implies $\tilde{\varphi} = 0$. By \eqref{eq:Kepler explicitly} and $\cos\hat{\theta}(t_1) = 0$,  
	\begin{equation}
			\tilde{r} = |\Omega|^2, \quad \tilde{p}_r = \dot{\tilde{\rho}} = \tilde{p}_\rho. \label{eq:tilde relations}
	\end{equation} 
	Energy conservation \eqref{eq:energy 1} and angular momentum conservation \eqref{eq:angular momentum conservation} yield  
	\begin{equation}
		\frac{1}{2} \left( \tilde{p}_r^2 + \frac{|\Omega|^2}{\tilde{\rho}^2} \right) - \frac{1}{\tilde{\rho}} = h. \label{eq:energy conservation}
	\end{equation}  
	Combining \eqref{eq:omega ratio}, \eqref{eq:tilde relations}, and \eqref{eq:energy conservation}, we have  
	$$
	\tilde{p}_r^2 = \left( \frac{\om^2 + 2hr^2}{2\om r} \right)^2.  
	$$  
	To determine the sign of $\tilde{p}_r$, observe that near the pericenter, $\tilde{\rho}(t)$ increases. Thus, $\tilde{p}_r = \tilde{p}_\rho > 0$, giving 
	$$
	\tilde{p}_r = \frac{\om^2 + 2hr^2}{2\om r}.  
	$$  
\end{proof}

\begin{lemma}\label{lemma;the existence of bz for Kepler}
	For $-1<2h\om^2<0$, there exists one and only one $z$-symmetric brake orbit $\xi_{bz}(t)$ besides $\xi_0(t)$ inside $\mathcal{M}$, moreover, $\xi_{bz}(t)$ and $\xi_0(t)$ forms a Hopf link.
\end{lemma}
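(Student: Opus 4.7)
The plan is to find $\xi_{bz}$ as the unique brake orbit from $\mathcal{B}_{+}$ satisfying the scalar condition $p_r(t_1)=0$ supplied by Lemma \ref{lemma:relationship pr r}, to verify its period using the reflection symmetries of $H_{\om}$, and to deduce the Hopf link from a crossing count against the disk $\Sigma_{+}$. The main obstacle is making uniqueness airtight: one must rule out $z$-symmetric brake orbits whose symmetry moment is a later nodal passage rather than $t_1$, but the two nodes of a Kepler ellipse carry opposite signs of $\dot{\rho}$, so $p_r=0$ at one node already captures the other and no further candidates appear.

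\textbf{Existence and uniqueness.} Any trajectory starting from a point of $\mathcal{B}_{+}^{\circ}$ with $p_r(0)=p_z(0)=0$ is a brake orbit. By the symmetry $(p_{r},p_{z},r,z)\mapsto(-p_{r},p_{z},r,-z)$ of $H_{\om}$ (the composition of time reversal and $z$-reflection) together with uniqueness for the initial value problem, such an orbit is $z$-symmetric iff it passes through $\{p_{r}=z=0\}$, equivalently iff $p_{r}(t_1)=0$ at the first return $t_1$ to $\Sigma_{-}$. Lemma \ref{lemma:relationship pr r} converts this into
\[\frac{\om^{2}+2hr(0)^{2}}{2\om\, r(0)}=0,\]
whose unique positive solution is $r(0)=\om/\sqrt{-2h}$, lying strictly between $r^{+}$ and $r^{-}$; substituting into the Hill-boundary equation then pins down $z(0)=e/(-2h)>0$. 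Starting from the mirror point on $\mathcal{B}_{-}^{\circ}$ produces the same geometric orbit by $z$-reflection, so $\xi_{bz}$ is the unique $z$-symmetric brake orbit besides $\xi_{0}$, and $z(0)>0$ makes it distinct from $\xi_{0}$. Applying the defining relation of $z$-symmetry at $t=t_1$ gives $\xi_{bz}(2t_1)=(0,0,r(0),-z(0))$, a brake point on $\mathcal{B}_{-}$; the reflected trajectory $\xi_{bz}^{\sharp}(t):=(p_{r}(t),-p_{z}(t),r(t),-z(t))$ is also a solution by the $z$-reflection symmetry of $H_{\om}$, and matching initial conditions at $t=2t_1$ forces $\xi_{bz}(2t_1+s)=\xi_{bz}^{\sharp}(s)$, so $\xi_{bz}(4t_1)=\xi_{bz}(0)$ and $\xi_{bz}$ is $4t_1$-periodic.

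\textbf{Hopf link.} By Lemma \ref{lemma:global serface of section 1}, $\Sigma_{+}$ is an embedded disk in $\mathcal{M}\cong\mathbb{S}^{3}$ with boundary $\xi_{0}$, hence a Seifert surface for $\xi_{0}$, and the linking number of $\xi_{bz}$ with $\xi_{0}$ equals the signed transverse intersection number of $\xi_{bz}$ with $\Sigma_{+}^{\circ}$. I would track the sign of $p_{z}$ along one period: since $\dot{p}_{z}=-z(r^{2}+z^{2})^{-3/2}$, the initial condition $z(0)>0$ gives $\dot{p}_{z}(0)<0$ and hence $p_{z}<0$ on $(0,t_1]$; the symmetry centred at $t_1$ extends this to $p_{z}<0$ on $[t_1,2t_1)$; and the reflection above flips the sign to yield $p_{z}>0$ on $(2t_1,4t_1)$. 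Thus $\xi_{bz}$ meets $\{z=0\}$ exactly twice per period, at $t=t_1\in\Sigma_{-}^{\circ}$ and $t=3t_1\in\Sigma_{+}^{\circ}$, both transversally because $\dot{z}=p_{z}\neq0$. The signed intersection with $\Sigma_{+}^{\circ}$ is therefore $\pm 1$; together with the unknottedness of $\xi_{0}$ this forces $\xi_{bz}$ to represent $\pm 1$ in $H_{1}(\mathbb{S}^{3}\setminus\xi_{0})\cong\mathbb{Z}$, so $\xi_{bz}\cup\xi_{0}$ is a Hopf link.
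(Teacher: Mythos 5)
Your proposal follows essentially the same route as the paper: you reduce $z$-symmetry of a brake orbit to the scalar condition $p_r(t_1)=0$, solve it via Lemma~\ref{lemma:relationship pr r} to get the unique root $r(0)=\om/\sqrt{-2h}$, and then establish the link by intersecting $\xi_{bz}$ with the Seifert disk $\Sigma_+$. The paper's version of this proof is three sentences; yours fills in several of the details it leaves implicit (why ``$z$-symmetric brake orbit $\Leftrightarrow p_r(t_1)=0$'' captures both nodal passages, the $4t_1$-periodicity from the two reflections, and the explicit sign-tracking of $p_z$ that yields exactly one transverse crossing of $\Sigma_+^\circ$), all of which is correct and useful.

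The only place where you (and the paper) are a bit quick is the last sentence: a transverse intersection number of $\pm 1$ with a Seifert disk of $\xi_0$ shows $\xi_{bz}$ represents a generator of $H_1(\mathbb{S}^3\setminus\xi_0)$, but a linking number of $\pm 1$ alone does not characterize the Hopf link --- $\xi_{bz}$ must also be unknotted. In this setting that is automatic but deserves a word: because $\Sigma_+$ is a global surface of section (Lemma~\ref{lemma:global serface of section 1}), $\mathcal{M}\setminus\xi_0$ fibers over $S^1$ with disk fibers, and $\xi_{bz}$, meeting each fiber once, is a section; any such section is isotopic to the core of the complementary solid torus, hence unknotted, and the pair is then a Hopf link. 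Your argument already has all the ingredients (transversality, one crossing per period, the global-surface-of-section structure), so this is a small clarification rather than a fix.
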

\begin{proof}
	Assume $\xi(t)$ is a brake orbit. It is also a $z$-symmetric orbit if and only if $p_{r}(t_1)=0$. By Lemma  \ref{lemma:relationship pr r}, we get the unique root $r(0)=\frac{\om}{\sqrt{-2h}}$. So there exists one and only one $z$-symmetric brake orbit $\xi_{bz}(t)$ besides $\xi_0(t)$ within $\mathcal{M}$. Due to these two orbits linked together exactly once, they form a Hopf link. 
\end{proof}
\begin{remark}
	The orbit $\xi_{bz}(t)$ corresponds to a circular solution of \eqref{eq:spacial kepler Ham1} with angular momentum $\om$ about the $z$-axis.
\end{remark}

\section{The Kepler Problem with a symmetric perturbation}\label{sec:KPSP}

Now we will consider the following perturbed Kepler problem
$$ H_{\ep}(p, q) = \ey |p|^2 - \frac{1}{|q|}+\ep f(q, \ep). $$
with $f$ satisfying conditions \eqref{eq;symm-cond}. Rotational symmetry condition in \eqref{eq;symm-cond} implies the angular momentum with respect to the $z$-axis is still a first integral. Following the approach given in the previous section, by fixing it to be a non-zero constant $\om$, we get a reduced Hamiltonian system 
\begin{equation*}
(\dot{p}_r, \dot{p}_z, \dot{r}, \dot{z})^T = J \nabla H_{\om, \ep}(p_r, p_z, r, z).
\end{equation*}
with the reduced Hamiltonian $H_{\om, \ep}$ given as \eqref{eq:spacial kepler pertur Ham}. 

First, we establish properties analogous to those of the Kepler problem.  

\begin{proof}[Proof of Proposition \ref{prop;diffeomorphic to S3}] 
	By Proposition \ref{prop;diffeomorphic to S3 1}, when $ 2h\om^2 \in (-1, 0) $, $ \mathcal{M}(h, \om, 0) $ is a compact manifold diffeomorphic to $ \mathbb{S}^3 $, where  
	$$
	\min\left\{ r : (p_r, p_z, r, z) \in \mathcal{M}(h, \om, 0) \right\} = \frac{\om^2}{1 + \sqrt{1 + 2h\om^2}} > \frac{\om^2}{2}.  
	$$ 
	For $ \ep $ sufficiently small, $ \mathcal{M}(h, \om, \ep) $ remains close to $ \mathcal{M}(h, \om, 0) $.
	 
	As in the Kepler problem, the Hamiltonian $H_{\om,\ep}$, whose vector field $X_{H_{\om,\ep}}$ is defined by $ \omega(\ \cdot\ , X_{H_{\om,\ep}}) = \mathrm{d} H_{\om,\ep} $ with symplectic form $ \omega = \mathrm{d}p_r \wedge \mathrm{d}r + \mathrm{d}p_z \wedge \mathrm{d}z $, is a mechanical Hamiltonian. Consequently, the hypersurface $ \mathcal{M}(h, \om, \ep) $ is of contact type. Thus, there exists a contact form $ \lambda_{\ep} $ on $ \mathcal{M}(h, \om, \ep) $ satisfying $ \lambda_{\ep} \wedge \mathrm{d}\lambda_{\ep} \neq 0 $. Its Reeb vector field $ R_{\ep} $, determined by $ \mathrm{d}\lambda_{\ep}(R_{\ep}, \cdot) = 0 $ and $ \lambda_{\ep}(R_{\ep}) = 1 $, is parallel to $ X_{H_{\om,\ep}} $, with $ \mathrm{d}\lambda_{\ep} = \omega $. 
\end{proof}

\begin{remark}  
	While the contact form $ \lambda_{\ep} $ and Reeb vector field $ R_{\ep} $ differ from those of the Kepler problem, we simplify notation by writing $ \lambda $ and $ R $ in this section, as computations depend only on the symplectic form $ \omega $.  
\end{remark}

\begin{lemma}
\label{lem:xi-r-ep} There always exists a periodic solution $ \xi_{\ep}(t) $ entirely contained in the $ (p_r, r) $-plane.
\end{lemma}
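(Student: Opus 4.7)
The plan is to exploit the reflection symmetry condition in \eqref{eq;symm-cond} to show that the codimension-two subset $\{z=0,\,p_z=0\}$ is invariant under the flow of \eqref{eq;Ham-sys-pert}, and then to produce $\xi_\ep(t)$ as a regular energy level curve of the one-degree-of-freedom system obtained by restriction.

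First, I would check invariance. The reflection symmetry $f(r,z,\ep)=f(r,-z,\ep)$ implies $\partial_z f(r,0,\ep)=0$ for all $r$ and all small $\ep$. Writing out Hamilton's equations for \eqref{eq:spacial kepler pertur Ham},
\begin{equation*}
\dot{z}=p_z,\qquad \dot{p}_z=-\frac{z}{(r^2+z^2)^{3/2}}-\ep\,\partial_z f(r,z,\ep),
\end{equation*}
both right-hand sides vanish identically on $\{z=0,p_z=0\}$, so this set is flow-invariant. The restriction of $H_{\om,\ep}$ to it defines a one-degree-of-freedom Hamiltonian on the $(p_r,r)$-plane,
\begin{equation*}
\widetilde H_{\om,\ep}(p_r,r)=\frac{1}{2}p_r^2+\frac{\om^2}{2r^2}-\frac{1}{r}+\ep f(r,0,\ep),
\end{equation*}
whose flow is the restriction of the full flow.

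Next, I would show that the level set $\{\widetilde H_{\om,\ep}=h\}$ is, for $(\om,h)\in\Gamma_\ep$ and $\ep$ small, a smooth embedded closed curve in the $(p_r,r)$-halfplane $\{r>0\}$, which then automatically gives a periodic orbit of $\widetilde H_{\om,\ep}$, hence a periodic solution of \eqref{eq;Ham-sys-pert} lying in the $(p_r,r)$-plane. For $\ep=0$, the curve is precisely the Kepler orbit $\xi_0(t)$ from Section~\ref{sec:KP}, which bounds the compact region $\Upsilon$ appearing in \eqref{eq:the expression of action}; it is a non-degenerate regular level set because the only critical point of $\widetilde H_{\om,0}$ on $\{r>0\}$ is the minimum at $r=\om^2$ of the effective potential, whose value $-1/(2\om^2)$ is strictly below $h$ for $(\om,h)\in\Gamma_0$. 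For small $\ep$, the effective potential $V_\ep(r)=\om^2/(2r^2)-1/r+\ep f(r,0,\ep)$ still possesses a unique non-degenerate minimum $r_\ep$ close to $\om^2$, with minimum value close to $-1/(2\om^2)$, and the level set $\{\widetilde H_{\om,\ep}=h\}$ persists as a smooth closed curve bounding a compact region $\Upsilon_\ep$ close to $\Upsilon$, by the implicit function theorem and standard persistence of regular level sets. Running the flow along this curve yields the desired $\xi_\ep(t)$.

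The only delicate point is checking that the perturbed curve is genuinely closed and stays inside $\{r>0\}$ away from the singularity. This follows from Proposition~\ref{prop;homeomorphic to S3}: the lower bound $\min\Pi_r\mathcal H(h,\om,\ep)>R>\om^2/2$ already confines the full energy surface away from $r=0$, so in particular its intersection with $\{z=0,p_z=0\}$ avoids the collision singularity. Compactness and smoothness then follow from $h$ being a regular value of $\widetilde H_{\om,\ep}$ for small $\ep$. Thus the main obstacle—ensuring the perturbed level curve does not degenerate to a point or run into the singular locus—is handled uniformly by the non-degeneracy of the Kepler minimum together with the uniform lower bound on $r$ furnished by Proposition~\ref{prop;homeomorphic to S3}.
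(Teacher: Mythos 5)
Your argument is correct and follows essentially the same route as the paper: use the reflection symmetry to show $\{z=0,\,p_z=0\}$ is flow-invariant, restrict to the resulting one-degree-of-freedom Hamiltonian on the $(p_r,r)$-plane, and identify $\xi_\ep$ as the compact regular level curve $\{\widetilde H_{\om,\ep}=h\}$. The paper states this in one line; you have simply made the invariance check and the persistence of the closed level curve explicit, which is a legitimate and welcome filling-in of detail rather than a different proof.
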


\begin{proof}
Due to the $ z $-symmetry of the system, the subsystem restricted to the $ (p_r, r) $-plane is governed by the Hamiltonian  
$$
H_{\om,\ep}(p_r, r) = \frac{1}{2}p_r^2 + \frac{\om^2}{2r^2} - \frac{1}{r} + \ep f(r, 0, \ep),  
$$  
whose solution is $ \xi_{\ep}(t) $.
\end{proof}
 
\begin{lemma}\label{lemma:global serface of section}
	The sets  
	$$
	\begin{aligned}  
		\Sigma_{+,\ep} &= \left\{ (p_r, p_z, r, z) \in \mathcal{M}(h, \om, \ep) : z = 0, \, p_z \geq 0 \right\}, \\  
		\Sigma_{-,\ep} &= \left\{ (p_r, p_z, r, z) \in \mathcal{M}(h, \om, \ep) : z = 0, \, p_z \leq 0 \right\},  
	\end{aligned}  
	$$  
	are disk-like global surfaces of section for the reduced Hamiltonian flow \( \varphi_{\ep}^t \), with \( \xi_{\ep} \) as their common boundary.
\end{lemma}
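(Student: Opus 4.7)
The plan is to verify the three defining conditions of a disk-like global surface of section, mimicking the Kepler argument of Lemma \ref{lemma:global serface of section 1}, but replacing explicit formulas by estimates that remain uniform for small $\ep$. First I would identify the common boundary: on the locus $\{z=0,\,p_z=0\}\cap\mathcal M(h,\om,\ep)$, the reflection symmetry in \eqref{eq;symm-cond} forces $\partial_z f(r,0,\ep)=0$, so $\dot p_z=-\partial_z H_{\om,\ep}=0$ and $\dot z=p_z=0$ there. Hence this locus is flow-invariant and, by Lemma \ref{lem:xi-r-ep}, coincides precisely with the planar periodic orbit $\xi_\ep$, so $X_{H_{\om,\ep}}$ is automatically tangent to $\partial\Sigma_{\pm,\ep}$. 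Transversality on the interiors is immediate, since the $z$-component of $X_{H_{\om,\ep}}$ equals $\dot z=p_z$, which is strictly positive on $\Sigma_{+,\ep}^{\circ}$ and strictly negative on $\Sigma_{-,\ep}^{\circ}$.

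The real work is the return property. Using the reflection symmetry and Taylor's theorem I would write $\partial_z f(r,z,\ep)=z\,g(r,z,\ep)$ for a smooth function $g$, whence
\begin{equation*}
\ddot z \;=\; -\frac{z}{(r^2+z^2)^{3/2}} - \ep\,z\,g(r,z,\ep) \;=\; -z\!\left[\frac{1}{(r^2+z^2)^{3/2}} + \ep\,g(r,z,\ep)\right].
\end{equation*}
By Proposition \ref{prop;homeomorphic to S3}, for $(\om,h)\in\Gamma_\ep$ and $\ep$ small, $\mathcal M(h,\om,\ep)$ is compact with $r\ge R>0$, so $1/(r^2+z^2)^{3/2}$ is bounded below by some $c_0>0$ while $g$ is uniformly bounded on this compact set. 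Shrinking $\ep$ if necessary, the bracket exceeds $c_0/2$, yielding the uniform estimate $\ddot z \le -\tfrac{c_0}{2}z$ whenever $z\ge 0$, and symmetrically $\ddot z \ge -\tfrac{c_0}{2}z$ whenever $z\le 0$.

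With this estimate in hand the Kepler return argument carries over almost verbatim. For a trajectory starting at $\Sigma_{+,\ep}^{\circ}$, if $p_z$ never vanishes for $t>0$ then $z$ monotonically approaches some $z_0>0$, and invariance of the $\omega$-limit set produces an orbit with $z\equiv z_0,\ p_z\equiv 0$, contradicting $\ddot z\le -\tfrac{c_0}{2}z_0<0$; hence there is a minimal $t_1>0$ with $p_z(t_1)=0$ and $z(t_1)>0$. The Sturm--Picone comparison theorem applied to $\ddot z+\tfrac{c_0}{2}z\le 0$ then forces $z$ to reach $0$ in finite time $t_+>t_1$ with $p_z(t_+)<0$, so the trajectory crosses into $\Sigma_{-,\ep}^{\circ}$. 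Reversing time and using the analogous bound for $z\le 0$ supplies the return from $\Sigma_{-,\ep}^{\circ}$.

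The main obstacle is securing the uniform coercivity $\ddot z/z\le -c_0/2$ valid for all $(\om,h)\in\Gamma_\ep$ simultaneously with $\ep$ small, since without it both the $\omega$-limit argument and the comparison step break down. This rests on combining three ingredients: the persistence of compactness of the energy surface from Proposition \ref{prop;homeomorphic to S3}(a), the collision-avoiding lower bound $r\ge R>0$ from Proposition \ref{prop;homeomorphic to S3}(b), and the smooth factorization of $\partial_z f$ in $z$ furnished by the reflection symmetry in \eqref{eq;symm-cond}. Once these ingredients are assembled, the proof reduces to the Kepler case.
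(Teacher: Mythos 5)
Your proof is correct and follows essentially the same route as the paper, which simply states that the argument ``mirrors Lemma \ref{lemma:global serface of section 1}, leveraging the $z$-symmetry and the fact that the system is a perturbation of the Kepler problem''; you have supplied the details the paper leaves implicit, in particular the key observation that the reflection symmetry yields the factorization $\partial_z f(r,z,\ep)=z\,g(r,z,\ep)$, which together with the collision-avoidance bound $r\ge R>0$ and compactness of $\mathcal{M}_\ep$ gives the coercive estimate $\ddot z\le -\tfrac{c_0}{2}z$ for $z\ge 0$ that drives both the $\omega$-limit argument and the Sturm comparison step.
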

\begin{proof}  
	The proof mirrors Lemma \ref{lemma:global serface of section 1}, leveraging the $ z $-symmetry and the fact that the system is a perturbation of the Kepler problem.  
\end{proof} 

Recent work by Cristofaro-Gardiner, Hryniewicz, Hutchings, and Liu \cite{CHHL23} established an algebraic relationship between the contact volume of a contact manifold and the action and Seifert rotation number of Reeb orbits when the manifold admits exactly two Reeb orbits:  
	\begin{thm}\label{thm:The condition of only two}  
		\cite[Theorem 1.2 and 1.5]{CHHL23} Let $ S $ be a 3-sphere with a contact form $ \lambda $ supporting exactly two Reeb orbits, $ \xi_1 $ and $ \xi_2 $. Let $ A_i $ and $ \text{Rot}_i $ denote the action and Seifert rotation number of $ \xi_i $ ($ i = 1, 2 $), and let $ \text{Vol}(S, \lambda) $ be the contact volume. Then  
		\begin{equation}\label{eq:Kepler equal}  
			\text{Vol}(S, \lambda) = \frac{A_1^2}{\text{Rot}_1} = \frac{A_2^2}{\text{Rot}_2},  
		\end{equation}  
		and both Reeb orbits are irrationally elliptic.  
	\end{thm}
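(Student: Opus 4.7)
The plan is to apply the framework of embedded contact homology (ECH) together with the Weyl-type volume formula of Cristofaro-Gardiner, Hutchings and Ramos, which is the core technical input of the argument. Recall that for any contact form $\lambda$ on a closed three-manifold $Y$ the ECH spectrum $\{c_{k}(Y,\lambda)\}_{k\geq 1}$ is a non-decreasing sequence of real numbers, each realized as the total action of some ECH generator, namely an orbit set $\alpha=\{(\alpha_{i},m_{i})\}$ built from simple Reeb orbits with positive integer multiplicities. The Weyl-type law
\begin{equation*}
\lim_{k\to\infty}\frac{c_{k}(S,\lambda)^{2}}{k}=2\,\text{Vol}(S,\lambda)
\end{equation*}
provides the bridge between the contact volume and the asymptotic distribution of Reeb orbit actions, and is what will ultimately force the algebraic identity in \eqref{eq:Kepler equal}.

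Under the standing assumption of exactly two simple Reeb orbits $\xi_{1},\xi_{2}$, every ECH generator must be a formal product $\xi_{1}^{m}\xi_{2}^{n}$ with $m,n\geq 0$, whose total action is $mA_{1}+nA_{2}$. The ECH index of such a generator is an explicit combination of Conley--Zehnder indices of the iterates plus a writhe/self-linking correction. Assuming both orbits are elliptic with rotation numbers $\text{Rot}_{1},\text{Rot}_{2}$, the Conley--Zehnder index of the $k$-th iterate of $\xi_{i}$ is $2\lfloor k\,\text{Rot}_{i}\rfloor+1$, yielding an expression for $I(\xi_{1}^{m}\xi_{2}^{n})$ that is linear in $(m,n)$ up to a bounded error.

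The heart of the argument is then a two-dimensional lattice counting. As $k\to\infty$, the number of pairs $(m,n)\in\mathbb{Z}_{\geq 0}^{2}$ with ECH index at most $k$ grows like the area of the triangular region cut out by the linearized index inequality $m\,\text{Rot}_{1}+n\,\text{Rot}_{2}\lesssim k/2$, while the corresponding $c_{k}$ is approximately $mA_{1}+nA_{2}$ along the boundary of this triangle. Maximizing $mA_{1}+nA_{2}$ subject to the index bound and comparing the resulting asymptotic $c_{k}^{2}/k$ with the Weyl law forces the two quotients $A_{1}^{2}/\text{Rot}_{1}$ and $A_{2}^{2}/\text{Rot}_{2}$ to coincide with each other and with $\text{Vol}(S,\lambda)$; the quadratic dependence on $A_{i}$ and the division by $\text{Rot}_{i}$ arise naturally from the fact that a triangular area scales like the square of the base divided by the slope.

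For the claim that both orbits are irrationally elliptic, one rules out hyperbolic orbits and rational rotation numbers separately. A hyperbolic orbit contributes iterates whose Conley--Zehnder indices grow only linearly in a degenerate way; its even iterates produce ECH generators whose index profile is incompatible with the Weyl growth and with the assumption of only two simple orbits. Rational $\text{Rot}_{i}$ would make some iterate of $\xi_{i}$ resonant, and a local Birkhoff-type bifurcation (or a Morse-theoretic count of critical points of the action functional on the space of loops near the resonant iterate) would produce an additional simple Reeb orbit, again contradicting the hypothesis. The main obstacles I expect are precisely these two: (i) promoting the only-two-Reeb-orbits hypothesis to ellipticity, which requires carefully tracking ECH Euler characteristic contributions of hyperbolic orbits; and (ii) controlling the bounded-error term in the lattice count sharply enough so that the Weyl law identifies the exact constant $A_{i}^{2}/\text{Rot}_{i}$ rather than only the leading order of growth.
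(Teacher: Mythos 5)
The paper does not prove this theorem; it is quoted from \cite{CHHL23} and used as an external input, so there is no internal proof to compare your sketch against. Your proposal does correctly identify the machinery of the cited proof: embedded contact homology, the ECH Weyl law of Cristofaro-Gardiner, Hutchings, and Ramos, and a lattice-point count over the monoid of orbit sets generated by $\xi_1$ and $\xi_2$. To that extent the plan is sound, and the heuristic that a triangular lattice count with the right slopes forces each $A_i^2/\mathrm{Rot}_i$ to equal the volume is essentially the right picture.

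Two of the finishing steps, however, are gaps rather than proofs. First, your dismissal of hyperbolic orbits does not hold: a hyperbolic simple orbit is perfectly compatible with both the ECH index formula and the Weyl law, so the incompatibility cannot come from the growth rate of $c_k$ alone; in \cite{CHHL23} it comes from finer structure (the $U$-map together with an action--index inequality). Second, ``a local Birkhoff-type bifurcation produces a third simple orbit'' is not a theorem; a degenerate Reeb orbit need not have neighbors, and \cite{CHHL23} derives irrationality of $\mathrm{Rot}_i$ from the ECH framework rather than from local dynamics near a resonant iterate. You also gloss over the normalization that makes $\mathrm{CZ}(\xi_i^k)=2\lfloor k\,\mathrm{Rot}_i\rfloor+1$ match the \emph{Seifert} rotation number: this requires using the trivialization of $\ker\lambda$ along $\xi_i$ induced by a Seifert surface, which is precisely where the self-linking correction in \eqref{eq;rot_ep} of the present paper enters, and getting this wrong would shift the constant by $A_i^2$ rather than change merely the leading order.
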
 
\begin{remark}
	 We say a Reeb orbit $\xi$ is elliptic if all eigenvalues of its monodromy matrix $\gamma$ lie on $\mathbb{U}$ and are semi-simple, where $\mathbb{U}$ denotes the unit circle in the complex plane. If, additionally, the angle $\vartheta$ is irrational for the eigenvalues $\sigma = e^{i\vartheta}$ and $\sigma^{-1} = e^{-i\vartheta}$, we say $\xi$ is irrationally elliptic. A Reeb orbit $\xi$ is hyperbolic if all eigenvalues of its monodromy matrix $\gamma$ are real and distinct from $\pm 1$. 
\end{remark}
By Lemma \ref{lemma:global serface of section}, $ \Sigma_{-,\ep} $ is a global surface of section. The first return time is defined as  
$$  
\hat{\tau}: \Sigma_{-,\ep} \setminus \xi_{\ep} \to \mathbb{R}^+, \quad z \mapsto \inf \left\{ t > 0 \, \big| \, \varphi_\ep^t(z) \in \Sigma_{-,\ep} \right\},  
$$  
and the first return map  
$$  
\psi: \Sigma_{-,\ep} \setminus \xi_{\ep} \to \Sigma_{-,\ep} \setminus \xi_{\ep}, \quad z \mapsto \varphi_\ep^{\hat{\tau}(z)}(z),  
$$  
is an area-preserving disk map where periodic points of $ \psi $ correspond to periodic orbits of $ \varphi_\ep^t $.

Combined with Franks' Theorem \cite{Franks92}: 
\begin{thm}\label{thm:disk map}  
	Let $ \mathbb{D} $ be an open disk and $ \mathfrak{F}: \mathbb{D} \to \mathbb{D} $ an area-preserving disk map. If $ \mathfrak{F} $ has two distinct periodic points, then it has infinitely many periodic points.  
\end{thm}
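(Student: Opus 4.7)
The final statement is Franks' classical theorem on periodic points of area-preserving disk maps, invoked in the paper as a cited result. I outline the strategy of its proof.

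The plan is to compactify the disk and reduce to a version on the sphere. First I would extend $\mathfrak{F}$ to an orientation-preserving homeomorphism $\hat{\mathfrak{F}}$ of $S^2=\mathbb{D}\cup\{\infty\}$ by sending $\infty$ to itself, and note that $\hat{\mathfrak{F}}$ preserves the induced finite measure on $S^2$. Together with the two given periodic points of $\mathfrak{F}$, the fixed point $\infty$ produces at least three distinct periodic points of $\hat{\mathfrak{F}}$. Thus the disk statement reduces to the sphere version of Franks' theorem: any area-preserving homeomorphism of $S^2$ with at least three periodic points must have infinitely many.

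To prove the sphere version, I would argue by contradiction and assume that $\hat{\mathfrak{F}}$ has only finitely many periodic points. Passing to a sufficiently high iterate, one may assume all of them are fixed, forming a set $\mathrm{Fix}$ with $|\mathrm{Fix}|\ge 3$, so that the punctured surface $\Sigma:=S^2\setminus\mathrm{Fix}$ has non-trivial topology. By Poincar\'e recurrence applied to the restricted invariant measure, almost every point of $\Sigma$ is recurrent under $\hat{\mathfrak{F}}|_{\Sigma}$. Lifting to the universal cover $\tilde{\Sigma}\subset\mathbb{R}^2$ and invoking the Brouwer plane translation theorem together with Franks' free disk lemma, one would run a Brouwer translation arc through a recurrent point and manufacture an infinite disjoint chain of free topological disks whose iterates return arbitrarily close to the arc. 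This contradicts the wandering property guaranteed by Brouwer's theorem for a fixed-point-free lift, and therefore forces a new fixed point of some iterate $\hat{\mathfrak{F}}^k$, contradicting the assumed finiteness.

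The main obstacle in turning this outline into a rigorous argument is the construction and combinatorial control of the free disk chains on a punctured sphere of non-trivial topology; this requires a delicate theory of Brouwer translation arcs and their isotopy classes, which is precisely the technical heart of Franks' original paper. In our application to the perturbed Kepler problem the theorem is used only as a black box, so this obstacle does not appear in the remainder of the paper.
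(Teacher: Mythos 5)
The paper does not prove this theorem; it cites it as a black box from Franks \cite{Franks92}, so there is no ``paper's proof'' to compare against line by line. Your sketch correctly identifies the key ingredients of Franks' argument — Poincar\'e recurrence for the invariant finite measure, the Brouwer plane translation theorem, and the free-disk-chain machinery — and you are right that the technical heart is the combinatorics of free disk chains on a surface of nontrivial topology. Two remarks on your reduction. First, the more direct route to Franks' actual theorem in \cite{Franks92} (which is stated for the open annulus: an area-preserving homeomorphism with at least one periodic point has infinitely many) is to pass to an iterate making both given periodic points fixed, delete one of them to obtain an open annulus carrying a finite invariant measure, and apply the annulus theorem to the remaining periodic point; your one-point compactification to $S^2$ instead reduces to the sphere version (three periodic points imply infinitely many), which is a closely related but slightly different packaging of the same Brouwer-theoretic content. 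Second, your outline implicitly assumes $\mathfrak{F}$ is orientation-preserving (Brouwer's theorem requires it); this is automatic in the paper's application since $\mathfrak{F}$ arises as a first-return map of a flow on a global surface of section, but it is worth flagging since the theorem statement as written does not say so. Neither point is a gap in the sense of a wrong step — the outline is sound as far as it goes — but it remains an outline, as you acknowledge.
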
  
If the algebraic relation in Theorem \ref{thm:The condition of only two} fails, the system admits infinitely many periodic orbits.  

We now analyze how the contact volume of \( \mathcal{M}(h, \om, \ep) \) and the action/Seifert rotation number of $ \xi_{\ep} $ differ from the Kepler problem. Let $ \mathcal{M}(h, \om, \ep) $ be abbreviated as $ \mathcal{M}_\ep $.

\subsection{Estimate the contact volume and action value} Following the definitions in \cite{CHHL23}, the contact volume of $\mathcal{M}_{\ep}$ and the action value of $\xi_{\ep}$ are  
\begin{equation}\label{eq;contect volume}  
	\text{Vol}(\mathcal{M}_{\ep}) = \int_{\mathcal{M}_{\ep}} \lambda \wedge \mathrm{d}\lambda = \int_{S_{\ep}} \mathrm{d}\lambda \wedge \mathrm{d}\lambda = 2 \int_{S_{\ep}} \mathrm{d}p_r \wedge \mathrm{d}r \wedge \mathrm{d}p_z \wedge \mathrm{d}z,  
\end{equation}  
and  
\begin{equation}\label{eq:action}  
	A(\xi_{\ep}(t)) = \int_{\xi_{\ep}(t)} \lambda = \int_{\Upsilon_{\ep}} \mathrm{d}\lambda = \int_{\Upsilon_{\ep}} \mathrm{d}p_r \wedge \mathrm{d}r,  
\end{equation}  
where $S_{\ep}=\{H_{\om,\ep}\leq h\}$ and $\Upsilon_{\ep}=S_{\ep}\cap\{p_z=0,z=0\}$. 

For brevity, let $\text{Vol}(\mathcal{M}_{\ep})$ and $A(\xi_{\ep}(t))$ be abbreviated as $\text{Vol}_{\ep}$ and $A_{\ep}$, respectively.
\begin{lemma}\label{lemma:the derivative of action and contact volume}
When $(\om,h)\in\Gamma_\ep$, let $r(\theta)=\om^{2}\slash(1+e\cos\theta)$, $e=\sqrt{1+2h\om^2}$, we have  
$$ \left. \frac{\mathrm{d}\text{Vol}_{\ep}}{\mathrm{d}\ep}\right| _{\ep=0}=-4\pi\tilde{V}(f), \; \text{ with } \tilde{V}(f)=\iint_{\mathcal{H}} f(r,z,0)\ \mathrm{d}r\mathrm{d}z;$$
$$ \left.\frac{\mathrm{d}A_{\ep}}{\mathrm{d}\ep}\right|_{\ep=0}=-\frac{2}{\om}\tilde{A}(f), \; \text{ with } \tilde{A}(f)=\int_{0}^{\pi}r^{2}(\theta)f(r(\theta),0,0)\mathrm{d}\theta.$$ 
\end{lemma}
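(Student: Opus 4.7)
The plan is to reduce both the four-dimensional contact volume integral and the two-dimensional action integral to lower-dimensional integrals over Hill's region (respectively, the projected $r$-interval), where the $\ep$-dependence is carried entirely by the effective potential. Differentiation in $\ep$ is then transparent because the integrand will vanish on the moving boundary, killing the Leibniz boundary terms.

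First I would treat the contact volume. At each fixed $(r,z)\in\mathcal{H}_\ep$, the fiber of $S_\ep$ in the momentum variables is the Euclidean disk $\{(p_r,p_z):\tfrac12(p_r^2+p_z^2)\le h-V_{\om,\ep}(r,z)\}$, where $V_{\om,\ep}(r,z)=\frac{\om^2}{2r^2}-\frac{1}{\sqrt{r^2+z^2}}+\ep f(r,z,\ep)$ is the effective potential, with area $2\pi(h-V_{\om,\ep})$. Fubini then reduces \eqref{eq;contect volume} to
$$\mathrm{Vol}_\ep = 4\pi\iint_{\mathcal{H}_\ep}\bigl(h-V_{\om,\ep}(r,z)\bigr)\, dr\, dz.$$
Differentiating at $\ep=0$, with the boundary term vanishing since $h-V_{\om,0}\equiv 0$ on $\partial\mathcal{H}$, produces $-4\pi\tilde V(f)$ immediately.

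For the action, the same momentum reduction along $\Upsilon_\ep\subset\{p_z=0,z=0\}$ yields
$$A_\ep = 2\int_{r^+_\ep}^{r^-_\ep}\sqrt{2\bigl(h-\tilde V_{\om,\ep}(r)\bigr)}\, dr,$$
where $\tilde V_{\om,\ep}(r)=V_{\om,\ep}(r,0)$ and $r^\pm_\ep$ are the corresponding level-$h$ roots. Differentiating at $\ep=0$, with the boundary terms again vanishing, gives
$$\left.\frac{dA_\ep}{d\ep}\right|_{\ep=0} = -2\int_{r^+}^{r^-}\frac{f(r,0,0)}{\sqrt{2(h-\tilde V_{\om,0}(r))}}\, dr.$$
I would then change variable via the Kepler parametrization $r(\theta)=\om^2/(1+e\cos\theta)$, $\theta\in(0,\pi)$, as used in \eqref{eq;r-tht}. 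Along the unperturbed orbit $\xi_0$ one computes $\sqrt{2(h-\tilde V_{\om,0}(r))}=|\dot r|$ (from energy conservation) and $dr/d\theta=\dot r\cdot r^2/\om$ (from $\dot\theta=\om/r^2$), so the square root cancels and the integrand becomes $r^2(\theta)f(r(\theta),0,0)/\om$; together with the prefactor this yields $-\frac{2}{\om}\tilde A(f)$.

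The only delicate point is licensing differentiation under an integral whose domain depends on $\ep$. Because the integrand vanishes on the moving boundary in both cases, only the inner derivative contributes, and this is legitimate as soon as $\mathcal H_\ep$ and $[r^+_\ep,r^-_\ep]$ vary smoothly in $\ep$. That in turn follows from the implicit function theorem applied to $V_{\om,0}=h$ (resp.\ $\tilde V_{\om,0}=h$), using that $\nabla V_{\om,0}$ is nonvanishing on $\partial\mathcal H$ and $(\tilde V_{\om,0})'(r^\pm)\ne 0$, both visible from Proposition \ref{prop;homeomorphic to S3 1}. I expect this regularity verification to be the only nontrivial point; the rest is Fubini and a standard Kepler change of variables.
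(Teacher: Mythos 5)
Your proposal is correct and follows essentially the same route as the paper: Fubini reduction of the volume to an integral of $4\pi(h-V_{\om,\ep})$ over the Hill's region, the analogous one-dimensional reduction for the action, differentiation in $\ep$ with the boundary contributions vanishing because the integrand vanishes on the moving boundary, and the Kepler parametrization $r(\theta)=\om^2/(1+e\cos\theta)$ to evaluate the action derivative. The only cosmetic difference is that the paper packages the moving-boundary justification as a separate lemma (proved via a pullback and the divergence theorem) rather than citing the Leibniz rule plus the implicit function theorem as you do.
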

The proof of Lemma \ref{lemma:the derivative of action and contact volume} relies on the following result:  
\begin{lemma}\label{lemma:The volume of D}
	Let $F(x,y,\ep)\in C^{\infty}(\mathbb{R}^{2}\times(-\ep_{0},\ep_{0}),\mathbb{R})$ and $D\subset\mathbb{R}^{2}$ be a closed domain, $\partial D$ is the boundary of $D$. Assume $\phi^{\ep}$ is a family of diffeomorphisms, $F(x,y,0)|_{\partial D}=0$, and $\left.\frac{\partial F}{\partial x}(x,y,0)\right|_{\partial D}\neq 0$, $\left.\frac{\partial F}{\partial y}(x,y,0)\right|_{\partial D}\neq 0$, then
	\begin{equation}
		\left. \frac{\mathrm{d}}{\mathrm{d}\ep}\right| _{\ep=0}\int_{\phi^{\ep}D}F(x,y,\ep)\mathrm{d}x\mathrm{d}y=\int_{D} \frac{\partial F}{\partial \ep}(x,y,0)\mathrm{d}x\mathrm{d}y.
	\end{equation}
\end{lemma}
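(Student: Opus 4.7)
The plan is to pull the integral back to the fixed domain $D$ via the diffeomorphism $\phi^\ep$, differentiate under the integral sign using the chain rule, and then use the divergence theorem to convert the contribution coming from the motion of the boundary into a boundary integral of $F(\cdot,\cdot,0)$, which vanishes by hypothesis. This is the classical Reynolds transport / Leibniz rule argument for moving domains. I will assume tacitly that $\phi^0=\mathrm{id}$, since this is the only way the hypothesis $F(\cdot,\cdot,0)|_{\partial D}=0$ is compatible with the conclusion, and it matches how the lemma is used in the paper (to compare $\mathcal{M}_\ep$ with $\mathcal{M}_0$).

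Concretely, let $X(u,v)=\frac{\partial \phi^\ep}{\partial \ep}(u,v)\big|_{\ep=0}$ be the infinitesimal generator of the family. The change of variables $(x,y)=\phi^\ep(u,v)$ transforms the integral into
\begin{equation*}
I(\ep):=\int_{\phi^\ep D} F(x,y,\ep)\,\mathrm{d}x\,\mathrm{d}y=\int_D F(\phi^\ep(u,v),\ep)\,J_{\phi^\ep}(u,v)\,\mathrm{d}u\,\mathrm{d}v,
\end{equation*}
where $J_{\phi^\ep}$ denotes the Jacobian determinant. Since $\phi^0=\mathrm{id}$ we have $J_{\phi^0}=1$ and $\frac{\mathrm{d}}{\mathrm{d}\ep}J_{\phi^\ep}\big|_{\ep=0}=\mathrm{div}\,X$, so differentiating at $\ep=0$ yields
\begin{equation*}
I'(0)=\int_D \frac{\partial F}{\partial \ep}(u,v,0)\,\mathrm{d}u\,\mathrm{d}v + \int_D \Bigl[\nabla_{xy}F(u,v,0)\cdot X(u,v)+F(u,v,0)\,\mathrm{div}\,X(u,v)\Bigr]\mathrm{d}u\,\mathrm{d}v.
\end{equation*}

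The bracketed integrand is exactly $\mathrm{div}(F(\cdot,\cdot,0)\,X)$, so by the divergence theorem the second integral equals $\int_{\partial D} F(u,v,0)\langle X,\nu\rangle\,\mathrm{d}s$, with $\nu$ the outward unit normal to $\partial D$. Here is where the nondegeneracy hypothesis on the partials of $F(\cdot,\cdot,0)$ enters: it guarantees, by the implicit function theorem, that $\partial D$ is a smooth embedded curve (a regular level set of $F(\cdot,\cdot,0)$), so the divergence theorem is applicable. The hypothesis $F(\cdot,\cdot,0)|_{\partial D}=0$ then kills this boundary integral outright, leaving
\begin{equation*}
I'(0)=\int_D \frac{\partial F}{\partial \ep}(u,v,0)\,\mathrm{d}u\,\mathrm{d}v,
\end{equation*}
which is the claimed identity.

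The only delicate point is justifying the differentiation under the integral, but this is routine: $F\in C^\infty$, the family $\phi^\ep$ varies smoothly in $\ep$, and (in the intended applications) $D$ is compact, so dominated convergence suffices. The main conceptual content is the cancellation between the transport of $F$ by $X$ and the Jacobian variation, together with the fact that at first order the motion of $\partial D$ contributes nothing precisely because $F$ vanishes there at $\ep=0$.
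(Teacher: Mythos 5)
Your proof is correct and follows essentially the same route as the paper: pull back by $\phi^\ep$, differentiate, recognize the transport-plus-Jacobian term as $\operatorname{div}(F\,X)$ (the paper writes it coordinate-wise as $\partial_x(Fs^1)+\partial_y(Fs^2)$ with $X=(s^1,s^2)$), apply the divergence/Green theorem, and kill the boundary integral via $F|_{\partial D}=0$. Your explicit observation that $\phi^0=\mathrm{id}$ must be assumed is a genuine and useful clarification that the paper leaves tacit.
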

\begin{proof}
	Assume $\phi^{\ep}(u,v)=(x(u,v,\ep),y(u,v,\ep))$. Then
	\begin{equation}
		\begin{aligned}
			&\left.\frac{\mathrm{d}}{\mathrm{d}\ep}\right|_{\ep=0}\int_{\phi^{\ep}D}F(x,y,\ep)\mathrm{d}x\mathrm{d}y\\
			=&\left. \frac{\mathrm{d}}{\mathrm{d}\ep}\right| _{\ep=0}\int_{D} (\phi^{\ep})^{*} F(x,y,0)\mathrm{d}x\wedge\mathrm{d}y+\int_{D} \frac{\partial F}{\partial \ep}(x,y,0)\mathrm{d}x\mathrm{d}y.
		\end{aligned}
	\end{equation}
	It suffices to show $\left. \frac{\mathrm{d}}{\mathrm{d}\ep}\right| _{\ep=0}\int_{D} (\phi^{\ep})^{*} F(x,y,0)\mathrm{d}x\wedge\mathrm{d}y=0$. Denote $\left. \frac{\partial x}{\partial \ep}\right|_{\ep=0}$ and $\left. \frac{\partial y}{\partial \ep}\right|_{\ep=0}$ by $s^{1}$ and $s^{2}$ respectively, then
	\begin{equation}
		\begin{aligned}
			&\left. \frac{\mathrm{d}}{\mathrm{d}\ep}\right| _{\ep=0}\int_{D} (\phi^{\ep})^{*} F(x,y,0)\mathrm{d}x\wedge\mathrm{d}y\\
			=&\left. \frac{\mathrm{d}}{\mathrm{d}\ep}\right| _{\ep=0}\int_{D}F(x(u,v,\ep),y(u,v,\ep),0)\left| \frac{\partial(x,y)}{\partial(u,v)}\right| \mathrm{d}u\mathrm{d}v\\
			=&\int_{D}\left( \frac{\partial F}{\partial x}s^{1}+\frac{\partial F}{\partial y}s^{2}\right) \left| \frac{\partial(x,y)}{\partial(u,v)}\right| +F\left( \frac{\partial s^{1}}{\partial x}+\frac{\partial s^{2}}{\partial y}\right)\left| \frac{\partial(x,y)}{\partial(u,v)}\right| \mathrm{d}u\mathrm{d}v\\
			=&\int_{D}\left( \frac{\partial F}{\partial x}s^{1}+F\frac{\partial s^{1}}{\partial x}\right) +\left( \frac{\partial F}{\partial y}s^{2}+F\frac{\partial s^{2}}{\partial y}\right) \mathrm{d}x\mathrm{d}y\\
			=&\int_{D}\frac{\partial (F\cdot s^{1})}{\partial x}+\frac{\partial (F\cdot s^{2})}{\partial y} \mathrm{d}x\mathrm{d}y\\
			=&\oint_{\partial D}-F\cdot s^{2}\mathrm{d}x+F\cdot s^{1}\mathrm{d}y.
		\end{aligned}
	\end{equation}
	Since $F(x,y,0)|_{\partial D}=0$ and $\frac{\partial F}{\partial x}, \frac{\partial F}{\partial y} \neq 0$ on $\partial D$, the terms $s^1$ and $s^2$ remain finite. Thus, the integral vanishes.
\end{proof}
\begin{proof}[Proof of Lemma \ref{lemma:the derivative of action and contact volume}]
	By \eqref{eq;contect volume}, we have
	\begin{equation}
		\text{Vol}_{\ep}=2\pi\iint_{\mathcal{H}_{\ep}}2h-\frac{\om^{2}}{r^{2}}+\frac{2}{\sqrt{r^{2}+z^{2}}}-2\ep f(r,z,\ep)\ \mathrm{d}r\mathrm{d}z.
	\end{equation}
	By Lemma \ref{lemma:The volume of D}, we get
	\begin{equation}
		\left. \frac{\mathrm{d}\text{Vol}_{\ep}}{\mathrm{d}\ep}\right| _{\ep=0}=-4\pi\iint_{\mathcal{H}} f(r,z,0)\ \mathrm{d}r\mathrm{d}z=-4\pi\tilde{V}(f).
	\end{equation}
	Here $\mathcal{H}_\ep$ is Hill's region of $\mathcal{M}_\ep$ and $\mathcal{H}$ is Hill's region of $\mathcal{M}$.
	
	By \eqref{eq:action}, then we get
	\begin{equation}
		\begin{aligned}
			\left.\frac{\mathrm{d}A_{\ep}}{\mathrm{d}\ep}\right|_{\ep=0}&=2\left.\frac{\mathrm{d}}{\mathrm{d}\ep}\right|_{\ep=0}\int_{r_{1,\ep}}^{r_{2,\ep}}\left(2h-\frac{\om^{2}}{r^{2}}+\frac{2}{r}-2\ep f(r,0,\ep)\right)^{1/2}\mathrm{d}r\\
			&=-2\int_{r_{1,0}}^{r_{2,0}}f(r,0,0)\left(2h-\frac{\om^{2}}{r^{2}}+\frac{2}{r}\right)^{-1/2}\mathrm{d}r\\
			&=-\frac{2}{\om}\int_{0}^{\pi}r^{2}(\theta)f(r(\theta),0,0)\mathrm{d}\theta=-\frac{2}{\om}\tilde{A}(f).
		\end{aligned}
	\end{equation} 
	Here we set $r_{1,\ep}$ and $r_{2,\ep}$ are the roots of the equation $\frac{\om^{2}}{2r^{2}}-\frac{1}{r}+\ep f(r,0,\ep)=h$.
\end{proof}



\subsection{Seifert rotation number of $\xi_{\ep}$} 
In this subsection, we estimate the Seifert rotation number of  $\xi_{\ep}$. Let $\hat{i}(\xi_{\ep})$ denote the mean (Maslov-type) index of the periodic orbit $\xi_{\ep}$. The rotation number of $\xi_{\ep}$ is defined as  
	$
	\overline{\text{Rot}}(\xi_{\ep}) = \frac{\hat{i}(\xi_{\ep})}{2}.  
	$  
Since the self-linking number of $\xi_{\ep}$ is $-1$, by Remark 4.4 of \cite{CHHL23}, the Seifert rotation number of $\xi_{\ep}$ is  
\begin{equation}\label{eq;rot_ep}
	\text{Rot}(\xi_{\ep})=\overline{\text{Rot}}(\xi_{\ep})-1=\frac{\hat{i}(\xi_{\ep})}{2}-1.
\end{equation} 

The definitions and properties of the Maslov-type index (also known as the Conley-Zehnder index) and the mean index are listed in Appendix, for more details, we refer the reader to \cite{Lon02}. We denote $\text{Rot}(\xi_{\ep})$ by $\text{Rot}_\ep$ below.

To calculate $\text{Rot}_\ep$, we analyze the linearized equation along $\xi_{\ep}(t)$  
\begin{equation}\label{eq:linearized}
\dot{y} = J D^2 H_{\om,\ep}(\xi_{\ep}(t)) y. 
\end{equation} 
Using $\theta$ as the time variable, let $T_{\ep}$ denote the period of $\xi_{\ep}(\theta)$. Recall that $r(\theta)=\om^{2}\slash(1+e\cos\theta)$, and let $r_{1,\ep}$ and $r_{2,\ep}$ be the roots of  
$
\frac{\om^2}{2r^2} - \frac{1}{r} + \ep f(r, 0, \ep) = h.  
$  
From $r^2 \dot{\theta} = \om$, the period $T_{\ep}$ is expressed as
\begin{equation}
	\begin{aligned}
		T_{\ep}&=2\int_{r_{1,\ep}}^{r_{2,\ep}}\frac{\om}{r^{2}}\cdot\left(2h-\frac{\om^{2}}{r^{2}}+\frac{2}{r}-2\ep f(r,0,\ep)\right)^{-1/2}\mathrm{d}r\\
		&=2\frac{\mathrm{d}}{\mathrm{d}h}\int_{r_{1,\ep}}^{r_{2,\ep}}\frac{\om}{r^{2}}\left(2h-\frac{\om^{2}}{r^{2}}+\frac{2}{r}-2\ep f(r,0,\ep)\right)^{1/2}\mathrm{d}r,
	\end{aligned}
\end{equation}
yielding  
\begin{equation}\label{eq;tilde Tf}
	\begin{aligned}
		\tilde{T}(f):=\left.\frac{\mathrm{d}T_{\ep}}{\mathrm{d}\ep}\right|_{\ep=0}&=-2\frac{\mathrm{d}}{\mathrm{d}h}\int_{r_{1,0}}^{r_{2,0}}f(r,0,0)\frac{\om}{r^{2}}\cdot\left(2h-\frac{\om^{2}}{r^{2}}+\frac{2}{r}\right)^{-1/2}\mathrm{d}r\\
		&=-2\frac{\mathrm{d}}{\mathrm{d}h}\int_{0}^{\pi}f(r(\theta),0,0)\mathrm{d}\theta\\
		&=\frac{2}{e}\int_{0}^{\pi} r^{2}(\theta)\frac{\partial f}{\partial r}(r(\theta),0,0)\cos\theta\mathrm{d}\theta.
	\end{aligned}
\end{equation}
In spherical coordinates $(\rho, \tht, \varphi)$ with momenta $(p_{\rho},p_{\tht},p_{\varphi})=(\dot{\rho}, \rho^2\cos^2\varphi \dot{\tht},\rho^2\dot{\varphi})$, the reduced Hamiltonian is 
\begin{equation}
	H_{\om,\ep}(p_{\rho},p_{\varphi},\rho ,\varphi)=\frac{1}{2}(p_{\rho}^{2}+\frac{p_{\varphi}^{2}}{\rho^{2}})+\frac{\om^{2}}{2\rho^{2}\cos^{2}\varphi}-\frac{1}{\rho}+\ep f(\rho,\varphi,\ep).
\end{equation} 

Applying the time-scaling transformation $\frac{\mathrm{d}\tau}{\mathrm{d}t}=\frac{\om}{\rho^2}$, we define
\begin{equation}
	K_{\om,\ep}=\frac{\rho^{2}}{\om}(H_{\om,\ep}-h)=\frac{1}{\om}\left(\frac{1}{2}\rho^{2}p_{\rho}^{2}+\frac{1}{2}{p_{\varphi}^{2}}-h\rho^{2}+\frac{\om^{2}}{2\cos^{2}\varphi}-\rho+\ep g(\rho,\varphi,\ep)\right),
\end{equation}
where $g(\rho,\varphi,\ep)=\rho^{2} f(\rho,\varphi,\ep)$. Since  
\begin{equation}
	\nabla K_{\om,\ep}=\frac{\rho^2}{\om}\nabla H_{\om,\ep}+\nabla(\rho^2\slash\om)(H_{\om,\ep}-h),
\end{equation}
the solutions of $\dot{x}=J\nabla H_{\om,\ep}(x)$ coincide with those of $\frac{\mathrm{d}x}{\mathrm{d}\tau}=J\nabla K_{\om,\ep}(x)$ on the energy surface $K_{\om,\ep}^{-1}(0)=H_{\om,\ep}^{-1}(h)$. For $\xi_{\ep}$, as a solution of  $\frac{\mathrm{d}x}{\mathrm{d}\tau}=J\nabla K_{\om,\ep}(x)$, the period in $\tau$ is $T_{\ep}$.
The gradient and Hessian of \(K_{\om,\ep}\) are 
$$
\nabla K_{\om,\ep} = \frac{1}{\om} \left( \rho^2 p_\rho, \, p_\varphi, \, \rho p_\rho^2 - 2h \rho - 1 + \ep \frac{\partial g}{\partial \rho}, \, \frac{\om^2 \sin \varphi}{\cos^3 \varphi} + \ep \frac{\partial g}{\partial \varphi} \right),  
$$ 
and  
$$
D^2 K_{\om,\ep} = \frac{1}{\om} \begin{pmatrix}  
	\rho^2 & 0 & 2 \rho p_\rho & 0 \\  
	0 & 1 & 0 & 0 \\  
	2 \rho p_\rho & 0 & p_\rho^2 - 2h + \ep \frac{\partial^2 g}{\partial \rho^2} & \ep \frac{\partial^2 g}{\partial \rho \partial \varphi} \\  
	0 & 0 & \ep \frac{\partial^2 g}{\partial \rho \partial \varphi} & \frac{\om^2 (1 + 2 \sin^2 \varphi)}{\cos^4 \varphi} + \ep \frac{\partial^2 g}{\partial \varphi^2}  
\end{pmatrix}.  
$$
For $\xi_{\ep}(\tau)$ restricted to the $\varphi$-axis ($\varphi(\xi_{\ep}) = 0$), we have
$$
D^2 K_{\om,\ep}(\xi_{\ep}) = \frac{1}{\om} \begin{pmatrix}  
	\rho^2 & 0 & 2 \rho p_\rho & 0 \\  
	0 & 1 & 0 & 0 \\  
	2 \rho p_\rho & 0 & p_\rho^2 - 2h + \ep \frac{\partial^2 g}{\partial \rho^2} & 0 \\  
	0 & 0 & 0 & \om^2 + \ep \frac{\partial^2 g}{\partial \varphi^2}  
\end{pmatrix}.  
$$ 
The linearized equation $\frac{\mathrm{d}y}{\mathrm{d}\tau}=JD^{2}K_{\om,\ep}(\xi_{\ep}(\tau))y$ decouples into two independent subsystems  
\begin{equation}\label{eq;linear 1}
	\frac{\mathrm{d}\eta_{1}}{\mathrm{d}\tau}=\frac{1}{\om}J\left({\begin{array}{cc}
			\rho^{2} & 2\rho p_{\rho}\\
			2\rho p_{\rho} & p_{\rho}^{2}-2h+\ep\frac{\partial^{2}g}{\partial \rho^{2}}
	\end{array}}\right)\eta_{1},
\end{equation}
and
\begin{equation}\label{eq;linear 2}
	\frac{\mathrm{d}\eta_{2}}{\mathrm{d}\tau}=\frac{1}{\om}J\left({\begin{array}{cc}
			1 & 0\\
			0 & \om^{2}+\ep\frac{\partial^{2}g}{\partial \varphi^{2}}
	\end{array}}\right)\eta_{2}.
\end{equation}  
\begin{lemma}\label{lemma;only deponds on 2}
		Let $\gamma_\ep(\tau)$ be the fundamental solution of \eqref{eq;linear 2}, then $\text{Rot}_\ep=\hat{i}(\gamma_\ep(\tau))/2$.
	\end{lemma}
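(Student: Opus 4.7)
The plan is to exploit the block decomposition \eqref{eq;linear 1}--\eqref{eq;linear 2} of the linearization along $\xi_\ep$ and to verify that the first $(p_\rho,\rho)$-block contributes mean Maslov-type index exactly $2$; then the lemma reduces to a one-line arithmetic check against the defining formula \eqref{eq;rot_ep}.

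First I would note that, since $D^2K_{\om,\ep}(\xi_\ep(\tau))$ is block diagonal along $\xi_\ep$, the fundamental solution of the full linearized equation $\mathrm{d}y/\mathrm{d}\tau=JD^2K_{\om,\ep}(\xi_\ep(\tau))y$ is the symplectic direct sum $\gamma_{1,\ep}\oplus\gamma_\ep\in\mathrm{Sp}(4)$, where $\gamma_{1,\ep}$ denotes the fundamental solution of \eqref{eq;linear 1}. The mean Maslov-type index is additive under symplectic direct sum of symplectic paths, and it is invariant under time reparametrization $\mathrm{d}\tau/\mathrm{d}t=\om/\rho^2$ since it is a homotopy invariant of paths in $\mathrm{Sp}$. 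Consequently
\[
\hat i(\xi_\ep)=\hat i(\gamma_{1,\ep})+\hat i(\gamma_\ep).
\]

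Next I would identify the subsystem \eqref{eq;linear 1} as the linearization along $\xi_\ep$ of the planar Hamiltonian system obtained by restricting $K_{\om,\ep}$ to the invariant plane $\{\varphi=p_\varphi=0\}$. Two explicit linearly independent solutions of \eqref{eq;linear 1} are then available: the tangent $v_1(\tau)=\mathrm{d}\xi_\ep/\mathrm{d}\tau$, which is $T_\ep$-periodic, and a transverse solution $v_2(\tau)=\partial\xi^h_\ep/\partial h$ obtained by differentiating the planar orbit family with respect to the energy $h$. Since the planar period depends on $h$, the vector $v_2$ accumulates a secular term proportional to $\mathrm{d}T_\ep/\mathrm{d}h$, so the monodromy in the basis $(v_1,v_2)$ is the unipotent shear $\bigl(\begin{smallmatrix}1 & c \\ 0 & 1\end{smallmatrix}\bigr)$. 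Because $\xi_\ep$ is a simple closed orbit encircling the unique equilibrium of the planar reduced system, the path $\gamma_{1,\ep}:[0,T_\ep]\to\mathrm{Sp}(2)$ winds exactly once around the identity, and a standard computation (see, e.g., \cite{Lon02}) gives $\hat i(\gamma_{1,\ep})=2$.

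Combining the two steps with \eqref{eq;rot_ep} yields
\[
\text{Rot}_\ep=\frac{\hat i(\xi_\ep)}{2}-1=\frac{\hat i(\gamma_{1,\ep})+\hat i(\gamma_\ep)}{2}-1=\frac{\hat i(\gamma_\ep)}{2},
\]
as claimed. The hard part will be the verification $\hat i(\gamma_{1,\ep})=2$: although heuristically ``one loop $\mapsto$ mean index $2$'' is exactly what should happen for a simple closed planar orbit, rigorously pinning down the value requires either a direct winding computation for the Lagrangian subspace spanned by $v_1$ along the path $\gamma_{1,\ep}$, or first treating the Kepler case $\ep=0$ (where the answer is classical) and then transporting the equality to small $\ep$ via the homotopy invariance of the mean index under the continuous deformation of unipotent monodromies. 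I expect this planar index computation to be the technical heart of the proof, with the remaining steps being formal.
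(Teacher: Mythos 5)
Your proposal is correct and follows essentially the same route as the paper: block-decompose the linearization, use additivity of the mean index, and show the $(p_\rho,\rho)$-block contributes mean index $2$ because the tangent vector $\mathrm{d}\xi_\ep/\mathrm{d}\tau$ is a periodic solution of \eqref{eq;linear 1}. The "technical heart" you flagged is handled in the paper by exactly the direct winding computation you anticipated — the $\rho$-component of that periodic solution vanishes twice per period, which pins the winding at one full turn and hence the mean index at $2$.
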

\begin{proof}
	Since $\xi_{\ep}(\tau)=(p_{\rho,\ep}(\tau),0,\rho_\ep(\tau),0)$ satisfies  
	$\frac{\mathrm{d}\xi_{\ep}(\tau)}{\mathrm{d}\tau}=J\nabla K_{\om,\ep}(\xi_{\ep}(\tau))$,
	differentiating both sides with respect to $\tau$ yields  
	$$
	\frac{\mathrm{d}^2\xi_{\ep}(\tau)}{\mathrm{d}\tau^2} = J D^2 K_{\om,\ep}(\xi_{\ep}(\tau)) \frac{\mathrm{d}\xi_{\ep}(\tau)}{\mathrm{d}\tau}.
	$$  
	Thus, $c \cdot \left( \frac{\mathrm{d}p_{\rho,\ep}(\tau)}{\mathrm{d}\tau}, \frac{\mathrm{d}\rho_\ep(\tau)}{\mathrm{d}\tau} \right)$ is a periodic solution of \eqref{eq;linear 1}. Because $\frac{\mathrm{d}\rho_\ep(\tau)}{\mathrm{d}\tau}$ vanishes twice over a period $T_\ep$, equation \eqref{eq;linear 1} contributes $2$ to the mean index. By \eqref{eq;rot_ep}, the result follows.
\end{proof}

For a $2\times2$ symplectic path from mechanical systems, the analysis is simpler than in the general case. Let $\sigma_{\ep}(\tau)=\rho(\tau)e^{i\vartheta(\tau)}$ and $\sigma_{\ep}^{-1}(\tau)$ denote the eigenvalues of $\gamma_{\ep}(\tau)$, where $\vartheta(t)$ is a continuous non-decreasing function. The mean index is then given by  
	$
	\hat{i}(\gamma_{\ep}(\tau)) = \frac{\vartheta(T_{\ep}) - \vartheta(0)}{\pi}.  
	$  
However, directly calculating the derivative of rotation number $\text{Rot}_\ep$ remains challenging. To address this, we utilize the trace formula introduced by the first author and collaborators \cite{HOW15}. This framework was originally developed to analyze the linear stability of the Lagrange elliptic relative equilibria. Below, we briefly summarize relevant results for our purposes. Let $T>0$, $I_{2n}$ denote the $2n\times2n$ identity matrix, and  
$  
J = \begin{pmatrix} 0 & -I_n \\ I_n & 0 \end{pmatrix}  
$  
be the standard symplectic matrix. Let $\mathcal{B}(k)$ denote the space of $k\times k$ symmetric matrices, and let $B(t),D(t)\in C([0,T],\mathcal{B}(2n))$. For the anti-periodic linear Hamiltonian system  
\begin{equation}\label{eq: linear Hamiltonian system}
	\dot{z}(t)=J(B(t)+\lambda D(t))z(t),\quad z(0)=-z(T),
\end{equation}
the operator $-J\frac{\mathrm{d}}{\mathrm{d}t}$ is densely defined in the Hilbert space $E=L^{2}([0,T],\mathbb{C}^{2n})$ with domain 
\begin{equation*}
	\mathfrak{D}_{-1} = \left\{ z(t) \in W^{1,2}([0, T], \mathbb{C}^{2n}) \, \big| \, z(0) = -z(T) \right\}. 
\end{equation*}
Assuming $-J\frac{\mathrm{d}}{\mathrm{d}t}-B(t)$ is invertible on  $\mathfrak{D}_{-1}$, the operator $\mathcal{F} = D \left( -J\frac{\mathrm{d}}{\mathrm{d}t} - B \right)^{-1}$ is of trace class.
\begin{lemma}\label{lemma: trace formular1}
	\cite[Theorem 3.3]{HOW15} Let $\gamma_{\lambda}(t)$ is the fundamental solution of \eqref{eq: linear Hamiltonian system}, then
	\begin{equation*}
		\det(id-\lambda\mathcal{F})= \det(\gamma_{\lambda}(T)+I_{2n})\cdot\det(\gamma_{0}(T)+I_{2n})^{-1}.
	\end{equation*}
\end{lemma}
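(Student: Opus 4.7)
The strategy is to show that both sides of the claimed identity, viewed as functions of $\lambda\in\mathbb{C}$, are meromorphic with the same zeros, poles, and values at $\lambda=0$, and then to upgrade this to equality by matching logarithmic derivatives. The LHS $\det(id-\lambda\mathcal{F})$ is a Fredholm determinant of the trace-class operator $\lambda\mathcal{F}$, hence is entire in $\lambda$ with value $1$ at $\lambda=0$. The RHS is a ratio of determinants of matrices depending holomorphically on $\lambda$ (because $\gamma_\lambda(T)$ depends analytically on $\lambda$ through the defining ODE); its denominator $\det(\gamma_0(T)+I_{2n})$ is nonzero precisely by the standing hypothesis that $L_0:=-J\frac{\mathrm{d}}{\mathrm{d}t}-B$ is invertible on $\mathfrak{D}_{-1}$, and the whole ratio equals $1$ at $\lambda=0$.

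The key bookkeeping step is the one-to-one correspondence of zeros. A value $\lambda_0$ is a zero of the LHS iff $\lambda_0^{-1}$ is an eigenvalue of $\mathcal{F}=DL_0^{-1}$; setting $z=L_0^{-1}w$ translates this into the existence of a nonzero $z\in\mathfrak{D}_{-1}$ with $L_{\lambda_0}z:=(-J\frac{\mathrm{d}}{\mathrm{d}t}-B-\lambda_0 D)z=0$. Such an antiperiodic solution exists iff there is $v\in\mathbb{C}^{2n}$ with $\gamma_{\lambda_0}(T)v=-v$, i.e.\ iff $\det(\gamma_{\lambda_0}(T)+I_{2n})=0$. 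So the LHS and the numerator of the RHS share their zero sets, and a Jordan-block count along the same dictionary (generalized $\lambda_0^{-1}$-eigenvectors of $\mathcal{F}$ vs.\ generalized $(-1)$-eigenvectors of $\gamma_{\lambda_0}(T)$) matches the multiplicities.

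To upgrade this coincidence of zero sets to an identity of meromorphic functions, I would differentiate both sides logarithmically. For the LHS,
\[
\frac{\mathrm{d}}{\mathrm{d}\lambda}\log\det(id-\lambda\mathcal{F})=-\mathrm{tr}\bigl((id-\lambda\mathcal{F})^{-1}\mathcal{F}\bigr)=-\mathrm{tr}(DL_\lambda^{-1}),
\]
where $L_\lambda^{-1}$ denotes the resolvent on $\mathfrak{D}_{-1}$ and the last equality follows from the factorization $L_\lambda=L_0(I-\lambda L_0^{-1}D)$ together with the cyclic trace identity. On the RHS, Jacobi's formula combined with the variation-of-constants identity $\partial_\lambda\gamma_\lambda(T)=\gamma_\lambda(T)\int_0^T\gamma_\lambda(t)^{-1}JD(t)\gamma_\lambda(t)\,\mathrm{d}t$ gives
\[
\frac{\mathrm{d}}{\mathrm{d}\lambda}\log\det(\gamma_\lambda(T)+I_{2n})=\mathrm{tr}\!\left((\gamma_\lambda(T)+I_{2n})^{-1}\gamma_\lambda(T)\int_0^T\gamma_\lambda(t)^{-1}JD(t)\gamma_\lambda(t)\,\mathrm{d}t\right).
\]
The equality of these two expressions is then obtained by constructing the Green's kernel of $L_\lambda$ on $\mathfrak{D}_{-1}$ explicitly in terms of $\gamma_\lambda$ and $(\gamma_\lambda(T)+I_{2n})^{-1}$ via the standard jump construction, and using cyclicity of the trace. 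Integrating in $\lambda$ from $0$, together with the fact that both sides already agree at $\lambda=0$, yields the identity.

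The main obstacle will be the careful evaluation of $\mathrm{tr}(DL_\lambda^{-1})$: the Green's kernel of the antiperiodic boundary value problem is discontinuous across the diagonal $t=s$, so the trace cannot be read off by naive diagonal integration of the kernel; one must instead use the fact that $DL_\lambda^{-1}$ is genuinely trace class and verify that the combinatorial contributions of the jump part and the boundary-correction part $(\gamma_\lambda(T)+I_{2n})^{-1}$ assemble into precisely the trace produced by Jacobi's formula on the RHS. Once this identification is in place, the rest of the argument is analytic routine.
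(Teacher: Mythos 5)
The paper does not actually prove this lemma: it is quoted verbatim from \cite[Theorem 3.3]{HOW15}, so there is no internal argument to compare against. Judged on its own terms, your strategy is sound and is essentially the standard route to such Fredholm-determinant/monodromy identities: both sides are analytic in $\lambda$ and equal $1$ at $\lambda=0$, so it suffices to match logarithmic derivatives; your formulas for both derivatives are correct (writing $L_\lambda:=-J\frac{\mathrm{d}}{\mathrm{d}t}-B-\lambda D$, the identity $\mathcal{F}(id-\lambda\mathcal{F})^{-1}=DL_\lambda^{-1}$ follows from $L_\lambda L_0^{-1}=id-\lambda\mathcal{F}$, and the variation-of-constants expression for $\partial_\lambda\gamma_\lambda(T)$ is right). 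One remark: the zero/multiplicity-matching step is redundant once the logarithmic derivatives agree, and as stated it is also imprecise --- the order of vanishing of $\lambda\mapsto\det(\gamma_\lambda(T)+I_{2n})$ at $\lambda_0$ is governed by how the spectrum of $\gamma_\lambda(T)$ moves with $\lambda$, not by the Jordan structure of $\gamma_{\lambda_0}(T)$ alone. You can simply drop that step.

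The step you yourself flag as ``the main obstacle'' is the only genuine gap, and you leave it unexecuted; as written the proposal is a plan rather than a proof. It can be closed as follows. The Green's kernel of $L_\lambda$ on $\mathfrak{D}_{-1}$ is
\[
G_\lambda(t,s)=\gamma_\lambda(t)\bigl[-(\gamma_\lambda(T)+I_{2n})^{-1}\gamma_\lambda(T)+\chi_{\{s<t\}}I_{2n}\bigr]\gamma_\lambda(s)^{-1}J .
\]
Decompose $DL_\lambda^{-1}=K_1+K_2$ accordingly. The operator $K_1$, coming from the constant bracket, has finite rank and a continuous kernel, so its trace is the diagonal integral, which by cyclicity equals $-\mathrm{tr}\bigl((\gamma_\lambda(T)+I_{2n})^{-1}\gamma_\lambda(T)\int_0^T\gamma_\lambda(s)^{-1}JD(s)\gamma_\lambda(s)\,\mathrm{d}s\bigr)$, i.e.\ exactly minus the Jacobi-formula expression. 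The operator $K_2$, coming from $\chi_{\{s<t\}}$, is Volterra; it is trace class because $DL_\lambda^{-1}$ is (the paper's standing assumption) and $K_1$ has finite rank, hence it is quasinilpotent and $\mathrm{tr}(K_2)=0$ by Lidskii's theorem. This yields $-\mathrm{tr}(DL_\lambda^{-1})=\frac{\mathrm{d}}{\mathrm{d}\lambda}\log\det(\gamma_\lambda(T)+I_{2n})$ away from the discrete zero set, and integrating from $\lambda=0$ completes the argument. With that computation supplied, your proof is correct and self-contained.
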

\begin{lemma}\label{lemma;det}
	\cite[Theorem 2.2]{HOW15} Under the above assumption, for $\lambda$ small enough, we have
	\begin{equation*}
		\det(id-\lambda\mathcal{F})=\exp\left\{\sum_{n=1}^{\infty}b_n\lambda^n\right\},
	\end{equation*}
	where $b_n=-\frac{1}{n}Tr(\mathcal{F}^n)$.
\end{lemma}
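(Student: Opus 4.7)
The plan is to derive the identity by combining the well-known logarithmic form of the Fredholm determinant with the Taylor series of $\log(1-x)$. Since $\mathcal{F}$ is of trace class, each power $\mathcal{F}^n$ is also trace class with $|Tr(\mathcal{F}^n)| \leq \|\mathcal{F}\|_1 \|\mathcal{F}\|^{n-1}$, where $\|\cdot\|_1$ denotes the trace norm and $\|\cdot\|$ the operator norm. This guarantees that the series $\sum_{n\geq 1} \lambda^n Tr(\mathcal{F}^n)/n$ converges absolutely whenever $|\lambda|\,\|\mathcal{F}\| < 1$, which is the quantitative meaning of the phrase ``$\lambda$ small enough''.

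Next, I would invoke the classical identity $\det(id-\lambda\mathcal{F}) = \exp\!\left(Tr\log(id-\lambda\mathcal{F})\right)$, valid for trace class operators on the region where $id - \lambda\mathcal{F}$ is invertible. For $|\lambda|\,\|\mathcal{F}\| < 1$, the Neumann expansion gives
$$\log(id-\lambda\mathcal{F}) = -\sum_{n=1}^{\infty} \frac{\lambda^n}{n}\,\mathcal{F}^n,$$
with convergence in trace-class norm. Taking the trace termwise, which is legitimate because the trace is continuous on the space of trace class operators, yields
$$Tr\log(id-\lambda\mathcal{F}) = -\sum_{n=1}^\infty \frac{\lambda^n}{n}Tr(\mathcal{F}^n) = \sum_{n=1}^\infty b_n \lambda^n,$$
and exponentiating both sides produces the formula in the statement with $b_n = -Tr(\mathcal{F}^n)/n$.

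The main obstacle is rigorously establishing the identity $\det(id-\lambda\mathcal{F}) = \exp(Tr\log(id-\lambda\mathcal{F}))$ for a possibly non-self-adjoint trace class operator, together with the trace-class convergence of the logarithm series. The cleanest route is first to verify the formula for finite-rank operators, where it reduces to the matrix identity $\det M = \exp(Tr\log M)$ obtained by triangularization, and then to pass to general trace class $\mathcal{F}$ by approximating in trace norm by finite-rank $\mathcal{F}_k$, using continuity of $\mathcal{F}\mapsto \det(id-\lambda\mathcal{F})$ in trace norm together with uniform bounds on $\|\mathcal{F}_k^n\|_1$. An equivalent analytic shortcut is to note that both sides are holomorphic in $\lambda$ near $0$ and share the same logarithmic derivative
$$\frac{\mathrm{d}}{\mathrm{d}\lambda}\log\det(id-\lambda\mathcal{F}) = -Tr\!\left((id-\lambda\mathcal{F})^{-1}\mathcal{F}\right),$$
which upon geometric-series expansion collapses to $-\sum_{n\geq 0}\lambda^n Tr(\mathcal{F}^{n+1})$; matching Taylor coefficients with the exponent on the right-hand side then forces the claimed value of $b_n$, and the common value at $\lambda = 0$ (both equal to $1$) fixes the constant of integration.
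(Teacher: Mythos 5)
The paper offers no proof of this lemma at all: it is quoted verbatim as Theorem 2.2 of \cite{HOW15}, so there is no internal argument to compare yours against. Your derivation is the standard Plemelj--Smithies route and is correct: the trace-norm bound $\|\mathcal{F}^n\|_1\le\|\mathcal{F}\|_1\|\mathcal{F}\|^{n-1}$ gives absolute convergence of $\sum_n \lambda^n \mathrm{Tr}(\mathcal{F}^n)/n$ for $|\lambda|\,\|\mathcal{F}\|<1$, the identity $\det(id-\lambda\mathcal{F})=\exp\bigl(\mathrm{Tr}\log(id-\lambda\mathcal{F})\bigr)$ is the classical fact for trace class perturbations of the identity, and either of your two routes to it (finite-rank approximation in trace norm, or matching the logarithmic derivative $-\mathrm{Tr}\bigl((id-\lambda\mathcal{F})^{-1}\mathcal{F}\bigr)$ term by term) is legitimate and is essentially how the cited reference establishes the formula. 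The only caveat worth recording is that the exponential identity must be taken on the disc $|\lambda|\,\|\mathcal{F}\|<1$ (or more precisely on a neighbourhood of $0$ where $id-\lambda\mathcal{F}$ is invertible), which is exactly the ``$\lambda$ small enough'' hypothesis of the lemma, so nothing is missing.
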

\begin{lemma}\label{lemma: trace formular2}
	\cite[Corollary 1.3]{HOW15} If $\gamma_{0}(T)=I_{2n}$, then 
	\begin{equation*}
		Tr(\mathcal{F})=-\frac{1}{2}Tr(\mathcal{D})=0\quad\text{ and   }\quad Tr(\mathcal{F}^{2})=-\frac{1}{4}Tr(\mathcal{D}^{2}),
	\end{equation*}
	where $\mathcal{D}=J\int_{0}^{T}\gamma_{0}^{T}(s)D(s)\gamma_{0}(s)\mathrm{d}s$.
\end{lemma}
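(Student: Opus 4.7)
The strategy is to expand both sides of the identity supplied by Lemma \ref{lemma: trace formular1} to order $\lambda^{2}$ and compare them against the expansion from Lemma \ref{lemma;det}. Taking the logarithm of the latter gives $\log\det(id-\lambda\mathcal{F}) = -Tr(\mathcal{F})\lambda-\tfrac{1}{2}Tr(\mathcal{F}^{2})\lambda^{2}+O(\lambda^{3})$, while the former, using the hypothesis $\gamma_{0}(T)=I_{2n}$ and hence $\det(\gamma_{0}(T)+I_{2n})=2^{2n}$, reduces everything to expanding $\gamma_{\lambda}(T)$ to second order in $\lambda$.

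To obtain this expansion I would perform Picard iteration after the change of variables $\gamma_{\lambda}(t)=\gamma_{0}(t)M_{\lambda}(t)$. Using the symplectic identity $\gamma_{0}^{T}J\gamma_{0}=J$ (equivalently $\gamma_{0}^{-1}J=J\gamma_{0}^{T}$), the linearized system collapses to $\dot{M}_{\lambda}=\lambda A(t)M_{\lambda}$ with $A(t):=J\gamma_{0}^{T}(t)D(t)\gamma_{0}(t)$, so that
\begin{equation*}
\gamma_{\lambda}(T)=I_{2n}+\lambda\mathcal{D}+\lambda^{2}N+O(\lambda^{3}), \quad \mathcal{D}=\int_{0}^{T}A(s)\,ds, \quad N=\int_{0}^{T}\int_{0}^{s}A(s)A(u)\,du\,ds.
\end{equation*}
Writing $\gamma_{\lambda}(T)+I_{2n}=2(I_{2n}+\tfrac{\lambda}{2}\mathcal{D}+\tfrac{\lambda^{2}}{2}N)+O(\lambda^{3})$ and applying $\log\det(I+X)=Tr(X)-\tfrac{1}{2}Tr(X^{2})+O(X^{3})$ yields $\log\det(id-\lambda\mathcal{F})=\tfrac{\lambda}{2}Tr(\mathcal{D})+\tfrac{\lambda^{2}}{2}Tr(N)-\tfrac{\lambda^{2}}{8}Tr(\mathcal{D}^{2})+O(\lambda^{3})$. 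Matching $\lambda$-coefficients immediately gives $Tr(\mathcal{F})=-\tfrac{1}{2}Tr(\mathcal{D})$; cyclicity together with $\gamma_{0}J\gamma_{0}^{T}=J$ collapses the integrand to $Tr(JD(s))$, which vanishes because $J$ is skew and $D(s)$ symmetric, so both sides equal zero.

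At order $\lambda^{2}$ the comparison gives $Tr(\mathcal{F}^{2})=\tfrac{1}{4}Tr(\mathcal{D}^{2})-Tr(N)$, so the remaining task is to prove $Tr(N)=\tfrac{1}{2}Tr(\mathcal{D}^{2})$. By cyclicity $Tr(A(s)A(u))$ is symmetric in $(s,u)$; hence the full square $\int_{0}^{T}\int_{0}^{T}Tr(A(s)A(u))\,du\,ds=Tr(\mathcal{D}^{2})$ splits into two equal halves over the triangles $\{u\le s\}$ and $\{u\ge s\}$, the first of which is exactly $Tr(N)$. Substituting yields $Tr(\mathcal{F}^{2})=-\tfrac{1}{4}Tr(\mathcal{D}^{2})$. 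The main delicate point is the order-two bookkeeping: one must not conflate the contribution $\tfrac{1}{2}Tr(N)$ arising from the linear term in $\log(1+X)$ with the quadratic correction $-\tfrac{1}{8}Tr(\mathcal{D}^{2})$ from $-\tfrac{1}{2}Tr(X^{2})$, and the symmetrization step that relates $N$ to $\mathcal{D}^{2}$ relies crucially on cyclic invariance of the trace combined with the symplectic identity defining $A(t)$.
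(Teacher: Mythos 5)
Your derivation is correct, and it is a clean reconstruction of a fact the paper itself does not prove but merely cites as \cite[Corollary 1.3]{HOW15}; the paper's ``proof'' is the citation, so there is nothing to compare against in the source except Lemmas \ref{lemma: trace formular1} and \ref{lemma;det}, which you invoke exactly as intended. The substitution $\gamma_{\lambda}=\gamma_{0}M_{\lambda}$, the simplification $\gamma_{0}^{-1}J=J\gamma_{0}^{T}$, the Picard expansion $M_\lambda(T)=I+\lambda\mathcal{D}+\lambda^2 N+O(\lambda^3)$, the use of $\gamma_0(T)=I_{2n}$ to reduce the right-hand side of the trace formula to $\det\bigl(I+\tfrac{\lambda}{2}\mathcal{D}+\tfrac{\lambda^2}{2}N\bigr)+O(\lambda^3)$, and the coefficient matching are all right. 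The vanishing of $Tr(\mathcal{D})$ via $\gamma_0 J\gamma_0^T=J$ together with skew-symmetric-times-symmetric having zero trace, and the symmetrization identity $Tr(N)=\tfrac{1}{2}Tr(\mathcal{D}^2)$ from cyclicity of the trace, are exactly the two nontrivial observations needed to close the argument, and you flag the order-two bookkeeping correctly.
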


Using the above results, we get the following theorem. 
\begin{thm}\label{thm:the derivative of rotation number}
	Define $D(f)$ and $E(f)$ as below, where $\tilde{T}(f)$ is introduced in \eqref{eq;tilde Tf}, 
	$$ E(f)=\int_{0}^{\pi}r^{2}(\theta)\frac{\partial^{2}f}{\partial\varphi^{2}}(r(\theta),0,0)\mathrm{d}\theta+\om^{2}\tilde{T}(f);$$
	$$ D(f)=E^2(f)-\left(\int_{0}^{\pi}r^{2}(\theta)\frac{\partial^{2}f}{\partial\varphi^{2}}(r(\theta),0,0)\cos2\theta\mathrm{d}\theta\right)^{2}.$$
	We have
	$$
	\frac{\mathrm{d}\text{Rot}_{\ep}}{\mathrm{d}\ep} |_{\ep=0} = \begin{cases}
		0, & \text{ if } D(f)<0; \\ 
		\frac{\text{sign}(E(f))}{2\pi\om^{2}}\sqrt{D(f)}, & \text{ if } D(f)\geq0,
	\end{cases} 
	$$where $\text{sign}(a)=a/|a|$ when $a\neq0$ and $\text{sign}(0)=0$.
\end{thm}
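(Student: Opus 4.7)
My plan exploits the fact that the unperturbed reduced Kepler problem is in exact $1{:}1$ resonance: in the rescaled time $\tau$, the fundamental solution of (\ref{eq;linear 2}) at $\ep = 0$ is a normalized rotation by angle $\tau$ with $\gamma_0(T_0) = I$ at $T_0 = 2\pi$. This puts us in the degenerate hypothesis of Lemma \ref{lemma: trace formular2}, forcing the first-order trace to vanish so that the leading perturbation appears at order $\ep^2$. Applying Lemmas \ref{lemma: trace formular1}, \ref{lemma;det} and \ref{lemma: trace formular2} to (\ref{eq;linear 2}) on the fixed interval $[0, T_0]$ gives
$$\det(\gamma_\ep(T_0) + I) = 4 + \tfrac{\ep^2}{2}\mathrm{Tr}(\mathcal D^2) + O(\ep^3),$$
where $\mathcal D = J\int_0^{T_0}\gamma_0^T \tilde D \gamma_0 \, d\tau$ and $\tilde D$ is the $\ep$-coefficient of the Hessian matrix in (\ref{eq;linear 2}). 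The evenness of $r(\tau) = \om^2/(1+e\cos\tau)$ in $\tau$ kills the $\sin(2\tau)$ integral, yielding an off-diagonal $\mathcal D$ whose entries involve only the two quantities $a = \int_0^\pi r^2 \partial_\varphi^2 f \, d\theta$ and $b = \int_0^\pi r^2 \partial_\varphi^2 f \cos(2\theta) \, d\theta$.

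Next I would correct for the period shift $T_\ep - T_0 = \ep\tilde T(f) + O(\ep^2)$ via a first-order Taylor expansion in the end-time, obtaining $\gamma_\ep(T_\ep) = I + \ep C + O(\ep^2)$ with $C = \mathcal D + \tilde T(f)\,JM_0$. A direct algebraic computation produces the crucial identity $\det(C) = D(f)/\om^4$, in which $\tilde T(f)$ combines with $a$ precisely to form $E(f) = a + \om^2\tilde T(f)$. The symplecticity $\det \gamma_\ep(T_\ep) = 1$ then forces the trace of the $\ep^2$-coefficient of $\gamma_\ep(T_\ep)$ to equal $-\det(C)$, yielding
$$\det(\gamma_\ep(T_\ep)+I) = 4 - \ep^2 D(f)/\om^4 + O(\ep^3).$$
If $D(f) < 0$, then $\gamma_\ep(T_\ep)$ is hyperbolic and homotopy invariance of the mean index fixes $\hat i(\gamma_\ep) \equiv 2$, so the derivative of $\mathrm{Rot}_\ep = \hat i(\gamma_\ep)/2$ vanishes; if $D(f) > 0$, then $\gamma_\ep(T_\ep)$ is elliptic with eigenvalues $e^{\pm i\theta_\ep}$ satisfying $4\cos^2(\theta_\ep/2) = 4 - \ep^2 D(f)/\om^4$, whence $|\theta_\ep| = \ep\sqrt{D(f)}/\om^2 + O(\ep^2)$.

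The final step is to determine the sign of $\theta_\ep$ when $D(f) > 0$, since the trace formula alone delivers only its magnitude. My plan is a Krein signature comparison: $JC$ is diagonal with entries $-(p_{11}+\tilde T(f)/\om)$ and $-(p_{22}+\om\tilde T(f))$, which for $D(f)>0$ share a common sign equal to $-\mathrm{sign}(E(f))$. Since $J(JM_0) = -M_0$ is negative definite (so $\gamma_0$ generates rotation in one definite direction in normalized coordinates), the infinitesimal generator $C$ reinforces this base rotation precisely when $JC$ is also negative definite, i.e., when $E(f) > 0$; otherwise it opposes it. Hence $\mathrm{sign}(\theta_\ep) = \mathrm{sign}(E(f))$, and combined with $\mathrm{Rot}_\ep = 1 + \theta_\ep/(2\pi) + O(\ep^2)$ this yields the stated formula. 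The main obstacle is this sign determination: the trace formula is agnostic to the direction of rotation, so distinguishing the two branches of the square root requires carefully comparing Krein signatures through the eigenvalue coalescence at $\tau = T_0$.
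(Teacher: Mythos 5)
Your proposal is correct in outline and shares the paper's core strategy: exploit the $1{:}1$ degeneracy $\gamma_0(T_0)=I$ so that Lemma \ref{lemma: trace formular2} forces the first variation to vanish, then extract $(\sigma_0')^2$ from $\mathrm{Tr}(\mathcal{F}^2)$ via Lemma \ref{lemma;det}. The bookkeeping differs: the paper rescales time to $[0,2\pi]$ so the period shift $\tilde T(f)$ is absorbed into the coefficient matrix $D_\ep$ before the trace formula is applied, while you apply the trace formula on the fixed interval and then correct the end-time by hand, packaging the total first-order correction as $C=\mathcal D+\tilde T(f)\,JM_0$ and computing $\det C = D(f)/\om^4$. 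These are equivalent routes to the same identity $-D(f)/\om^4=(\sigma_0')^2$.

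Where the two arguments genuinely diverge is the sign of the derivative when $D(f)>0$. The paper isolates the "mean" piece $\ep E(f)/(2\pi\om^2)\cdot I_2$ from $D_\ep$, shows the remainder alone produces a hyperbolic (non-rotating) monodromy, and then uses Sturm's comparison to conclude the mean piece alone controls the direction. You instead invoke the Krein signature of the infinitesimal generator $C$: since $\gamma_0$ generates a positive rotation (its generator $J$ has $-J\cdot J=I$ positive definite in the normalized frame), the sign of the first-order rotation increment is the sign of the symmetric form $\mathcal B=-JC$, which you show is a diagonal matrix whose determinant is $D(f)/\om^4>0$ and whose common diagonal sign is $\mathrm{sign}(E(f))$. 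This is a clean, conceptual alternative to the paper's comparison argument, and it makes transparent why $E(f)$ governs the sign while $D(f)$ governs the magnitude. Two things you should tighten up before this becomes a proof: (i) the quantities $p_{11},p_{22},M_0$ are never defined, and the stated diagonal entries of $JC$ need to be derived from $\mathcal D=J\int_0^{T_0}\gamma_0^T\tilde D\gamma_0\,d\tau$ explicitly (the vanishing of the off-diagonal entries relies on $\partial^2_\varphi f(r(\theta),0,0)$ being even in $\theta$, which you should state); (ii) the Krein-signature-determines-direction step deserves a sentence of justification — e.g.\ that the positive-definiteness of $B_\ep(s)=I_2+\ep D_\ep(s)$ keeps the rotation function $\vartheta_\ep(s)$ strictly increasing, so $\vartheta_\ep(T_\ep)-2\pi$ is determined up to sign by the conjugacy class of $\gamma_\ep(T_\ep)$, and the sign is pinned by $\mathcal B$. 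You also omit the $D(f)=0$ boundary case, but it is immediate from $\det C=0$ that the eigenvalue motion is $O(\ep^2)$, consistent with the theorem.
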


\begin{proof}
	To estimate the rotation number $\text{Rot}_{\ep}$ via the trace formula, we reformulate equation \eqref{eq;linear 2} as an anti-periodic boundary value problem. First, applying the symplectic transformation $\tilde{\eta}_{2} = \text{diag}(1/\sqrt{\om}, \sqrt{\om}) \eta_{2}$, we obtain
	\begin{equation*}
		\frac{\mathrm{d}\tilde{\eta}_2}{\mathrm{d}\tau}=J\left({\begin{array}{cc}
				1 & 0\\
				0 & 1+\frac{\ep}{\om^{2}}\cdot\frac{\partial^{2}g}{\partial \varphi^{2}}
		\end{array}}\right)\tilde{\eta}_{2},
	\end{equation*}
	Reparameterizing $\tau$ to $s$ via $\frac{\mathrm{d}\tau}{\mathrm{d}s}=\frac{T_{\ep}}{2\pi}$ (denoting $'=\frac{\mathrm{d}}{\mathrm{d}s}$), the system becomes 
	\begin{equation}\label{eq:linear equation 0}
		\tilde{\eta}_{2}'=\frac{T_{\ep}}{2\pi}J\left({\begin{array}{cc}
				1 & 0\\
				0 & 1+\frac{\ep}{\om^{2}}\cdot\frac{\partial^{2}g}{\partial \varphi^{2}}
		\end{array}}\right)\tilde{\eta}_{2},
	\end{equation}
	where the period of $\xi_{\ep}$ in $s$ is $2\pi$. The fundamental solution of \eqref{eq:linear equation 0} is a reparameterization of $\gamma_{\ep}(\tau)$, denoted $\gamma_{\ep}(s)$ with $s$ as the time variable.
	
	Consider the anti-periodic boundary value problem 
	\begin{equation}\label{eq:linear equation}
		\left\{\begin{aligned}
			&\tilde{\eta}'_2=J(I_2+\ep D_{\ep})\tilde{\eta}_2,\\
			&\tilde{\eta}_2(0)=-\tilde{\eta}_2(2\pi).
		\end{aligned}\right.
	\end{equation}
	where $D_{\ep}=\text{diag}\left(\frac{\tilde{T}(f)}{2\pi}+O(\ep),\, \frac{\tilde{T}(f)}{2\pi}+\frac{T_{\ep}}{2\om^{2}\pi}\cdot\frac{\partial^{2}g}{\partial \varphi^{2}}+O(\ep)\right)$ and $\tilde{T}(f)=\left.\frac{\mathrm{d}T_{\ep}}{\mathrm{d}\ep}\right|_{\ep=0}$. Since the operator $-J\frac{\mathrm{d}}{\mathrm{d}s}-I_2$ is invertible in $\mathfrak{D}_{-1}$, Lemma \ref{lemma: trace formular1} implies that for $\mathcal{F}_\ep=D_{\ep}\left(-J\frac{\mathrm{d}}{\mathrm{d}s}-I_2\right)^{-1}$, 
	\begin{equation*}
		\det(id-\ep\mathcal{F}_\ep)=\det(\gamma_{\ep}(2\pi)+I_2)\cdot\det(\gamma_{0}(2\pi)+I_2)^{-1}.
	\end{equation*}
	Direct computation shows $\gamma_0(s) = \begin{pmatrix} \cos s & -\sin s \\ \sin s & \cos s \end{pmatrix}$ and $\gamma_{0}(2\pi)=I_2$. Let $\sigma_{\ep}$ and $\sigma^{-1}_{\ep}$ denote the eigenvalues of $\gamma_{\ep}(2\pi)$. By Lemmas \ref{lemma;det} and \ref{lemma: trace formular2},
	\begin{equation*}
		1 - \frac{1}{2} Tr(\mathcal{F}_{\ep}^2) \ep^2 + o(\ep^2) = \frac{2 + \sigma_{\ep} + \sigma_{\ep}^{-1}}{4}.
	\end{equation*}
	Differentiating twice with respect to $\ep$ at $\ep=0$, we have
	\begin{equation*}
		-Tr(\mathcal{F}_0^2)= (\sigma_{0}')^{2}/2.
	\end{equation*}
	Using Lemma \ref{lemma: trace formular2} and the fact that $r(\theta)=\om^2/(1+e\cos\theta)$ is an even $2\pi$-periodic function, we compute
	\begin{equation*}
		-\left(\frac{1}{\om^{2}}\int_{0}^{\pi}r^{2}(\theta)\frac{\partial^{2}f}{\partial\varphi^{2}}(r(\theta),0,0)\mathrm{d}\theta+\tilde{T}(f)\right)^{2}+\frac{1}{\om^{4}}\left(\int_{0}^{\pi}r^{2}(\theta)\frac{\partial^{2}f}{\partial\varphi^{2}}(r(\theta),0,0)\cos2\theta\mathrm{d}\theta\right)^{2}=(\sigma_{0}')^{2}.
	\end{equation*}
    i.e.,
    \begin{equation}
    	-D(f)\slash \om^4=(\sigma_{0}')^{2}.
    \end{equation}

	Case 1: $D(f)<0$. Then $\sigma_{0}'$ is real, all eigenvalues of $\gamma_{\ep}(2\pi)$ are real, and $\text{Rot}_{\ep}=1=\text{Rot}_{0}$  with $\left.\frac{\mathrm{d}\text{Rot}_{\ep}}{\mathrm{d}\ep}\right|_{\ep=0}=0$.
	
	Case 2: $D(f)=0$. Then $\sigma_{0}'=0$, and the eigenvalue variation of $\gamma_{\ep}(2\pi)$ is of higher order in $\ep$, so $\left.\frac{\mathrm{d}\text{Rot}_{\ep}}{\mathrm{d}\ep}\right|_{\ep=0}=0$.
	
	Case 3: $D(f)>0$. Then $\sigma_{0}'$ is purely imaginary. Writing $\sigma_{\ep}=\exp(ic_{\ep})$, we have $(\sigma_{0}')^{2}=-(c_{0}')^{2}$, giving $\left.\frac{\mathrm{d}\text{Rot}_{\ep}}{\mathrm{d}\ep}\right|_{\ep=0}=\pm\frac{|c_{0}'|}{2\pi}=\pm\frac{1}{2\pi\om^{2}}\sqrt{D(f)}$. To determine the sign, rewrite \eqref{eq:linear equation} as 
	\begin{equation}\label{eq:linear equation2}
		\left\{\begin{aligned}
			&\tilde{\eta}_2'=J\left(I_2+\ep\frac{E(f)}{2\pi\om^{2}}I_2+\ep \left(D_{\ep}-\frac{E(f)}{2\pi\om^{2}}I_2\right)\right)\tilde{\eta}_2,\\
			&\tilde{\eta}_2(0)=-\tilde{\eta}_2(2\pi).
		\end{aligned}\right.
	\end{equation}
	and analyze the modified system 
	\begin{equation}\label{eq:linear equation3}
		\left\{\begin{aligned}
			&\tilde{\eta}_2'=J\left(I_2+\ep \left(D_{\ep}-\frac{E(f)}{2\pi\om^{2}}I_2\right)\right)\tilde{\eta}_2,\\
			&\tilde{\eta}_2(0)=-\tilde{\eta}_2(2\pi).
		\end{aligned}\right.
	\end{equation}
	Repeating the eigenvalue analysis for \eqref{eq:linear equation3}, the monodromy matrix eigenvalues are real. By Sturm’s Comparison Theorem: for $E(f)>0$, $\left.\frac{\mathrm{d}\text{Rot}_{\ep}}{\mathrm{d}\ep}\right|_{\ep=0}>0$; for $E(f)<0$, $\left.\frac{\mathrm{d}\text{Rot}_{\ep}}{\mathrm{d}\ep}\right|_{\ep=0}<0$.
\end{proof}
\begin{remark}
	Although $D(f)=D(-f)$, the derivative of $\text{Rot}_\ep$ is opposite in sign.
\end{remark}
The above result implies the following corollary. 

\begin{coro}\label{coro:linear stable}
	$\xi_{\ep}(t)$ is elliptic when $D(f)>0$ and hyperbolic when $D(f)<0$.
\end{coro}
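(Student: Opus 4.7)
The plan is to harvest the eigenvalue computation already performed in the proof of Theorem \ref{thm:the derivative of rotation number} and reinterpret it as a statement about the linear stability type of the monodromy matrix.

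First I would observe that the linearized flow along $\xi_{\ep}(\tau)$ decouples into the two blocks \eqref{eq;linear 1} and \eqref{eq;linear 2}. As pointed out in the proof of Lemma \ref{lemma;only deponds on 2}, the first block only contributes the direction tangent to the orbit and the energy direction, both of which correspond to the eigenvalue $1$ of the monodromy matrix. Consequently, whether $\xi_{\ep}$ is elliptic or hyperbolic is determined entirely by the eigenvalues $\sigma_{\ep}$ and $\sigma_{\ep}^{-1}$ of the fundamental solution $\gamma_{\ep}(2\pi)$ of \eqref{eq;linear 2} (after the rescaling used in the proof of Theorem \ref{thm:the derivative of rotation number}).

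Next I would recall the key identity established in that proof, namely $\gamma_{0}(2\pi)=I_{2}$ (so $\sigma_{0}=1$) together with
\begin{equation*}
(\sigma_{0}')^{2}=-D(f)/\om^{4}.
\end{equation*}
Since $\gamma_{\ep}(2\pi)$ is symplectic, its eigenvalues satisfy $\sigma_{\ep}\sigma_{\ep}^{-1}=1$, so the stability type for small $\ep\neq 0$ is encoded in the sign of
\begin{equation*}
\sigma_{\ep}+\sigma_{\ep}^{-1}-2=(\sigma_{0}')^{2}\,\ep^{2}+o(\ep^{2}).
\end{equation*}

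Finally I would split into the two cases. If $D(f)>0$, then $(\sigma_{0}')^{2}<0$, so $\sigma_{\ep}+\sigma_{\ep}^{-1}-2<0$ for small $\ep\neq 0$; this forces $\sigma_{\ep}$ to lie on $\uu\setminus\{\pm 1\}$, i.e., $\xi_{\ep}$ is elliptic. If $D(f)<0$, then $(\sigma_{0}')^{2}>0$, so $\sigma_{\ep}+\sigma_{\ep}^{-1}-2>0$ for small $\ep\neq 0$; this forces $\sigma_{\ep}\in\rr\setminus\{\pm 1\}$, i.e., $\xi_{\ep}$ is hyperbolic. There is no real obstacle here: all analytic work has been done in Theorem \ref{thm:the derivative of rotation number}, and the corollary is just the observation that the sign of $D(f)$ determines the direction (tangentially along $\uu$ versus transversally across $\uu$) in which the degenerate eigenvalue $1$ of $\gamma_{0}(2\pi)$ splits when the perturbation is turned on.
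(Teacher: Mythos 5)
Your proposal is correct and follows essentially the same route as the paper: the corollary is stated without a separate proof precisely because it is read off from the case analysis in the proof of Theorem \ref{thm:the derivative of rotation number}, where $(\sigma_{0}')^{2}=-D(f)/\om^{4}$ forces the eigenvalues of $\gamma_{\ep}(2\pi)$ onto the unit circle when $D(f)>0$ and onto the real axis when $D(f)<0$. Your added observation that block \eqref{eq;linear 1} only carries the tangent/energy directions, so that the stability type is governed by block \eqref{eq;linear 2} alone, is exactly the reduction the paper uses in Lemma \ref{lemma;only deponds on 2}.
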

\begin{remark}
	We say $\xi_{\ep}(t)$ is linearly stable, if $\xi_{\ep}(t)$ is elliptic, and linearly unstable, if $\xi_{\ep}(t)$ is hyperbolic.
\end{remark}
\subsection{The existence of infinitely many periodic orbits and a unique brake $z$-symmetric orbit which forms a Hopf link with $\xi_{\ep}$.}
Now we will show the perturbed system has infinitely many periodic solutions under certain conditions based on Theorem \ref{thm:The condition of only two} and Theorem \ref{thm:disk map}. 

\begin{thm}\label{thm:Exist infinity many}
	For sufficiently small $\ep$ and parameters $(\om,h) \in \Gamma_\ep$, let $C(e)=\frac{1}{\sqrt{1-e^{2}}}-1$, the reduced Hamiltonian system \eqref{eq;Ham-sys-pert} admits infinitely many periodic orbits within $\M_\ep$, if one of the following two conditions holds
	\begin{enumerate}
		\item[(i)] $D(f)<0$;
		\item[(ii)] $D(f)\geq0$ and $\tilde{V}(f)\neq2C(e)\tilde{A}(f)+C^{2}(e)\text{sign}(E(f))\sqrt{D(f)}\slash2$,
	\end{enumerate}
	where $D(f)$, $E(f)$ are defined in Theorem \ref{thm:the derivative of rotation number}, and $\tilde{V}(f)$, $\tilde{A}(f)$ are defined in Lemma \ref{lemma:the derivative of action and contact volume}.
\end{thm}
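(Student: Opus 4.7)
The plan is to argue by contradiction using the dichotomy ``exactly two vs.\ infinitely many periodic Reeb orbits'' on the contact sphere $\mathcal{M}_\ep$. Suppose $\varphi_\ep^t$ admits only finitely many periodic orbits in $\mathcal{M}_\ep$. By Lemma \ref{lemma:global serface of section}, $\Sigma_{-,\ep}$ is a disk-like global surface of section with boundary $\xi_\ep$, and its first return map $\psi$ on the interior is area-preserving; periodic points of $\psi$ correspond to periodic orbits of the flow other than $\xi_\ep$. Franks' theorem (Theorem \ref{thm:disk map}), together with the general fact that a contact form on $S^3$ admits at least two Reeb orbits, then forces the system to have exactly two Reeb orbits, one of which is $\xi_\ep$. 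Theorem \ref{thm:The condition of only two} applies, giving both the volume identity $\text{Vol}_\ep\cdot \text{Rot}_\ep = A_\ep^2$ for $\xi_\ep$ and irrational ellipticity of $\xi_\ep$.

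Case (i) is then immediate: if $D(f)<0$, Corollary \ref{coro:linear stable} asserts that $\xi_\ep$ is hyperbolic for small $\ep$, contradicting irrational ellipticity. Hence infinitely many periodic orbits must exist.

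For case (ii) the plan is to violate the volume identity at first order in $\ep$. Set $F(\ep) = \text{Vol}_\ep \cdot \text{Rot}_\ep - A_\ep^2$. At $\ep = 0$, Lemma \ref{lemma:kepler equal} gives $\text{Vol}_0 = A_0^2 = 4\pi^2\om^2 C^2(e)$, while at $\ep=0$ the reduced linearized system \eqref{eq;linear 2} reduces to $\tilde\eta_2' = J\tilde\eta_2$, whose fundamental solution over one $s$-period $2\pi$ of $\xi_0$ is a genuine $2\pi$-rotation; Lemma \ref{lemma;only deponds on 2} then yields $\text{Rot}_0 = 1$, so $F(0)=0$. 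Differentiating $F$ at $\ep = 0$ and substituting the formulas from Lemma \ref{lemma:the derivative of action and contact volume} and Theorem \ref{thm:the derivative of rotation number} produces
\[
F'(0) = -4\pi\left[\tilde V(f) - 2 C(e)\tilde A(f) - \tfrac{1}{2} C^2(e)\,\text{sign}(E(f))\sqrt{D(f)}\,\right].
\]
Hypothesis (ii) is precisely the statement that the bracket is nonzero, so $F(\ep) \neq 0$ for all sufficiently small $\ep \neq 0$, contradicting the volume identity that the ``exactly two Reeb orbits'' scenario demands.

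The main obstacle lies in the bookkeeping behind the formula for $F'(0)$: one must combine the piecewise expression for $\mathrm{d}\text{Rot}_\ep/\mathrm{d}\ep$ from Theorem \ref{thm:the derivative of rotation number} with the derivatives of contact volume and action from Lemma \ref{lemma:the derivative of action and contact volume}, and verify that the coefficient of each of the three contributions $\tilde V(f)$, $\tilde A(f)$, $\text{sign}(E(f))\sqrt{D(f)}$ lines up precisely with the inequality appearing in (ii). The identity $\text{Rot}_0 = 1$, which guarantees $F(0)=0$ and is what forces this first-order analysis in the first place, also deserves care, but follows cleanly from the explicit rotation fundamental solution at $\ep=0$. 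Once $F'(0)$ has been computed, the contradiction, and hence the conclusion, is immediate.
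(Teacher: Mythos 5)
Your proposal is correct and follows essentially the same route as the paper: combine the CHHL23 identity with Franks' theorem, use $\text{Rot}_0=1$ and Lemma \ref{lemma:kepler equal} to see the relation holds at $\ep=0$, then show the first-order derivative of the relation fails precisely under condition (ii) (your $F'(0)$ computation matches the paper's differentiation of $\text{Vol}_\ep = A_\ep^2/\text{Rot}_\ep$), while condition (i) is handled by the hyperbolicity of $\xi_\ep$ contradicting the irrational ellipticity forced by the two-orbit case.
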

\begin{proof}
First, from Lemma \ref{lemma:kepler equal} and the fact that $\text{Rot}_0 = 1$, we conclude that the Kepler problem satisfies the algebraic relations stated in Theorem \ref{thm:The condition of only two}. For the perturbed system \eqref{eq;Ham-sys-pert}, by Theorems \ref{thm:The condition of only two} and \ref{thm:disk map}, if the derivatives of both sides with respect to $\ep$ are unequal, i.e. 
$$
\left.\frac{\mathrm{d}\text{Vol}_{\ep}}{\mathrm{d}\ep}\right|_{\ep=0} \neq 2A(\xi_{0}(t))\left.\frac{\mathrm{d}A_{\ep}}{\mathrm{d}\ep}\right|_{\ep=0} - A^2(\xi_{0}(t))\left.\frac{\mathrm{d}\text{Rot}_{\ep}}{\mathrm{d}\ep}\right|_{\ep=0},
$$ 
then infinitely many periodic orbits exist for sufficiently small $\ep$. By Theorem \ref{thm:the derivative of rotation number}, Lemma \ref{lemma:the derivative of action and contact volume}, and equation \eqref{eq:action of Kepler}, this inequality is equivalent to condition (ii). Furthermore, by Corollary \ref{coro:linear stable}, Theorem \ref{thm:The condition of only two}, and Theorem \ref{thm:disk map}, condition (i) implies that $\xi_{\ep}$ is hyperbolic and that infinitely many periodic orbits exist.
\end{proof}
\begin{remark}
	From previous definition, $\tilde{V}(f)$, $\tilde{A}(f)$ and $\sqrt{D(f)}$ are the scaled derivatives of $\text{Vol}_\ep$, $A_\ep$ and $\text{Rot}_\ep$ respectively. 
\end{remark}
While one can obtain infinitely many periodic orbits using the above theorem, it requires some additional conditions besides the perturbation being small enough. Meanwhile, our next result shows the existence of at least one periodic orbit when the symmetric perturbation is small enough without any further assumption.

\begin{proof}[Proof of Theorem \ref{thm:z-sym exist}]

We apply the Implicit Function Theorem (IFT) to prove this theorem. First, we analytically construct a one-to-one correspondence between the $ r $-coordinates associated with $\mathcal{B}_{+,\ep}$ and $\mathcal{B}_{+,0}$, where  
$$
\mathcal{B}_{+,\ep} = \left\{ \frac{\om^2}{2r^2} - \frac{1}{\sqrt{r^2 + z^2}} + \ep f(r, z, \ep) = h \right\}
$$  
is the upper boundary of the Hill’s region $\mathcal{H}_\ep$, while $\mathcal{B}_{+,0}$ denotes the upper boundary of the Hill’s region $\mathcal{H}$ for the unperturbed Kepler problem. Among possible approaches to establish this correspondence, we parametrize points on $\mathcal{B}_{+,\ep}$ and $\mathcal{B}_{+,0}$ by constructing rays from $(\om^2, 0)$ that intersect these boundaries at $(R(r, \ep), Z(r, \ep))$ and $(r, z(r))$, respectively.  

To apply the IFT, let $\varphi^t_\ep( p_{r_0}, p_{z_0}, r_0, z_0)$ denote the flow of the system in $(\ref{eq:spacial kepler pertur Ham})$, where $(p_{r_0}, p_{z_0}, r_0, z_0)$ are initial conditions and $\ep$ is the perturbation parameter.  

For every $(r, z(r)) \in \mathcal{B}_{+,0}$, we have $R(r, 0) = r$ and $Z(r, 0) = z(r)$. Define the function  
$$
f_1(t, r, \ep) = \Pi_z\varphi^{t}_\ep( 0, 0, R(r, \ep), Z(r, \ep)).
$$ 
When $\ep = 0$, Lemma \ref{lemma:global serface of section 1} ensures that $\Sigma_{-}$ is a global surface of section. Hence, for fixed $r$, there exists a smallest $T(r) > 0$ such that  
$$
\varphi_0^{T(r)}( 0, 0, r, z(r)) \in \Sigma_{-}^\circ.
$$  
This implies:  
$$
\begin{aligned}
	&f_1(T(r), r, 0) = \Pi_z\varphi_0^{T(r)}( 0, 0, r, z(r)) = 0, \\
	&\partial_t f_1(T(r), r, 0) = \Pi_{p_z}\varphi_0^{T(r)}( 0, 0, r, z(r)) < 0.
\end{aligned}
$$  
By the IFT, there exists a unique $\tilde{T}(r, \ep)$ near $( T(r), r, 0)$ such that $\tilde{T}(r, 0) = T(r)$ and  
$$
\Pi_z\varphi_\ep^{\tilde{T}(r, \ep)}( 0, 0, R(r, \ep), Z(r, \ep)) = 0.
$$  

Next, consider the function  
$$
f_2(r, \ep) = \Pi_{p_r}\varphi_\ep^{\tilde{T}(r, \ep)}( 0, 0, R(r, \ep), Z(r, \ep)).
$$  
For $\ep = 0$, let $r_1 = \frac{\om}{\sqrt{-2h}}$. By Lemmas \ref{lemma:relationship pr r} and \ref{lemma;the existence of bz for Kepler},  
$$
\begin{aligned}
	&f_2(r_1, 0) = \Pi_{p_r}\varphi_0^{T(r_1)}( 0, 0, r_1, z(r_1)) = \frac{\om^2 + 2hr_1^2}{2\om r_1} = 0,\\
	&\partial_r f_2(r_1, 0) = \frac{2h}{\om} \neq 0.
\end{aligned}
$$
Applying the IFT again, there exists a unique $\tilde{r}(\ep)$ near $(r_1,0)$ such that $\tilde{r}(0) = r_1$ and  
\[
\Pi_{p_r}\varphi_\ep^{\tilde{T}(\tilde{r}(\ep),\ep)}( 0, 0, R(\tilde{r}(\ep), \ep), Z(\tilde{r}(\ep), \ep)) = 0.
\]  
This guarantees the existence of a unique $z$-symmetric brake orbit $\xi_{bz, \ep}(t)$ forming a Hopf link with $\xi_{\ep}(t)$.
\end{proof}
\begin{remark}
	There may even be infinitely many $z$-symmetric brake orbits, but only $\xi_{bz, \ep}(t)$ forms a Hopf link with $\xi_{\ep}(t)$.
\end{remark}

\section{The ellipsoid problem}\label{sec:ellipsoid}
In this section, we apply results from the previous section to the ellipsoid problem. Recall that when the eccentricity $\fe$ is small enough, the potential function can be written as \eqref{eq;ell-U}. If we further assume $GM=1$, then 
\begin{equation}
	\begin{aligned}			  
		U(\rho,\varphi) &=-\frac{1}{\rho}+\frac{R^{2}}{5}\fe\frac{3\sin^{2}\varphi-1}{\rho^{3}}+O(\fe^{2})\\
		&=-\frac{1}{\rho}+\frac{3\sin^{2}\varphi-1}{\rho^{3}}\ep+O(\ep^{2}).
	\end{aligned}
\end{equation}
where $\ep=\frac{R^{2}}{5}\fe$ as a small parameter.


Theorem \ref{thm:axial symmetry ellipsoid potential} follows from the next result. 
\begin{thm}
	For sufficiently small $\ep$, when $(\om,h) \in \Gamma_\ep$, 
	\begin{enumerate}
		\item[(a)] the ellipsoid problem has a solution $\xi_{\ep}(t)$ that lies on the $r$-axis and it is linearly stable. 
		\item[(b)] there exists one and only one $z$-symmetric brake orbit $\xi_{bz,\ep}(t)$ in $\M_\ep$ which forms a Hopf link with $\xi_{\ep}(t)$.
		\item[(c)] the ellipsoid problem admits infinitely many relative periodic solutions within $\M_\ep$.
	\end{enumerate}
\end{thm}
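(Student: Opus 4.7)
The plan is to reduce the three parts of the theorem to the general perturbed-Kepler results of Section \ref{sec:KPSP}, applied to the explicit oblate/prolate correction
\begin{equation*}
f(r,z,0)=\frac{2z^{2}-r^{2}}{(r^{2}+z^{2})^{5/2}},
\end{equation*}
equivalently $(3\sin^{2}\varphi-1)/\rho^{3}$ in spherical coordinates; this manifestly satisfies both symmetries in \eqref{eq;symm-cond}. Part (b) is then a direct application of Theorem \ref{thm:z-sym exist}, so the substantive work is to verify, for every $(\om,h)\in\Gamma_{\ep}$, the analytic hypotheses of Corollary \ref{coro:linear stable} for (a) and of Theorem \ref{thm:Exist infinity many} for (c).

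For (a), I assemble the quantities of Theorem \ref{thm:the derivative of rotation number}. Direct differentiation yields $\partial_{\varphi}^{2}f(\rho,0)=6/\rho^{3}$ and $\partial_{r}f(r,0,0)=3/r^{4}$, so that along the Keplerian trace $r(\theta)=\om^{2}/(1+e\cos\theta)$ the density $r^{2}(\theta)\partial_{\varphi}^{2}f(r(\theta),0,0)$ equals $6(1+e\cos\theta)/\om^{2}$. Its integral on $[0,\pi]$ against $1$ equals $6\pi/\om^{2}$, while its integral against $\cos 2\theta$ vanishes by orthogonality, so the second summand defining $D(f)$ drops out. A parallel computation using Lemma \ref{lemma:the derivative of action and contact volume} gives $\tilde{T}(f)=6\pi/\om^{4}$, hence $E(f)=6\pi/\om^{2}+\om^{2}\cdot 6\pi/\om^{4}=12\pi/\om^{2}>0$ and $D(f)=E(f)^{2}=(12\pi/\om^{2})^{2}>0$. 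Corollary \ref{coro:linear stable} then delivers the ellipticity (linear stability) of $\xi_{\ep}$, proving (a).

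For (c), since $D(f)>0$ and $\mathrm{sign}(E(f))=+1$, condition (ii) of Theorem \ref{thm:Exist infinity many} reduces to the strict inequality
\begin{equation*}
\tilde{V}(f)\ne 2C(e)\tilde{A}(f)+\frac{6\pi C^{2}(e)}{\om^{2}}.
\end{equation*}
The value $\tilde{A}(f)=-\pi/\om^{2}$ is immediate from $f(r,0,0)=-1/r^{3}$. For $\tilde{V}(f)$ I switch to polar coordinates $(r,z)=(\rho\cos\varphi,\rho\sin\varphi)$ on the Hill region, obtaining $\tilde{V}(f)=\int_{\varphi^{-}}^{\varphi^{+}}(3\sin^{2}\varphi-1)(1/\rho^{-}(\varphi)-1/\rho^{+}(\varphi))\,d\varphi$, with $\rho^{\pm}(\varphi)$ and $\varphi^{\pm}$ the boundary data from Proposition \ref{prop;homeomorphic to S3 1}. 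Using the Vieta relations $\rho^{+}+\rho^{-}=-1/h$ and $\rho^{+}\rho^{-}=-\om^{2}/(2h\cos^{2}\varphi)$ and the substitution $\sin\varphi=e\sin\theta$ employed in Lemma \ref{lemma:kepler equal}, the integral reduces to elementary trigonometric integrals fitting the templates \eqref{formula 1}--\eqref{formula 2}, and $\tilde{V}(f)$ emerges as a closed-form real-analytic function of $(\om,e)$.

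The main obstacle is then to show that the resulting expression
$\tilde{V}(f)-2C(e)\tilde{A}(f)-6\pi C^{2}(e)/\om^{2}$
does not vanish anywhere on $\Gamma_{0}$. Since the three summands are real-analytic in $(\om,e)$ on the connected open set $\Gamma_{0}$, it suffices to rule out an identity between them; this can be done by inspecting a convenient asymptotic regime (for instance the small-eccentricity limit $e\to 0$, where $C(e)=e^{2}/2+O(e^{4})$ and the leading $\om$-dependences on the two sides can be read off term by term to preclude cancellation). Once non-vanishing is verified on $\Gamma_{0}$, continuity in $\ep$ propagates it to $\Gamma_{\ep}$ for sufficiently small $\ep$, and Theorem \ref{thm:Exist infinity many} yields infinitely many periodic orbits of the reduced system on $\mathcal{M}_{\ep}$; the $\theta$-rotational symmetry lifts them to the claimed infinitely many relative periodic solutions of the full ellipsoid problem, completing (c).
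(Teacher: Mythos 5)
Your reduction of (a) and (b) to Lemma \ref{lem:xi-r-ep}, Corollary \ref{coro:linear stable} and Theorem \ref{thm:z-sym exist}, and your computations $\tilde{T}(f)=6\pi/\om^4$, $E(f)=12\pi/\om^2$, $D(f)=(12\pi/\om^2)^2>0$, $\tilde{A}(f)=-\pi/\om^2$, all agree with the paper. The gap is in the last step of (c). You argue that since the three summands are real-analytic on the connected set $\Gamma_0$, ``it suffices to rule out an identity between them'' by checking one asymptotic regime. That only shows the difference is not identically zero, hence nonzero off a set of measure zero; the theorem asserts infinitely many periodic orbits for \emph{every} $(\om,h)\in\Gamma_\ep$, so you must verify the strict inequality at every point, i.e.\ for every $e\in(0,1)$. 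A nontrivial real-analytic function can still vanish on a nonempty codimension-one subset of $\Gamma_0$, and your argument cannot exclude that.

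The suggested small-$e$ check is also more treacherous than you anticipate. Both sides carry the identical overall factor $1/\om^2$ (so the ``$\om$-dependences'' distinguish nothing), and after computing $\tilde{V}(f)=-\pi e^2(4-3e^2)/(4\om^2)$ one finds that the left side $-\pi e^2(4-3e^2)/(4\om^2)$ and the right side $2\pi C(e)(3C(e)-1)/\om^2$ agree through order $e^4$: both expand as $-\pi e^2/\om^2+3\pi e^4/(4\om^2)+O(e^6)$, and the difference first appears at order $e^6$. So ``reading off the leading terms'' would wrongly suggest cancellation. The paper avoids all of this by evaluating $\tilde{V}(f)$ in closed form via the substitution $\sin\varphi=e\sin\theta$ (exactly the route you sketch) and then proving the pointwise strict inequality $-\tfrac{1}{4}e^2(4-3e^2)<2C(e)(3C(e)-1)$ for all $e\in(0,1)$, which is what you need to supply to close the argument.
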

 
\begin{proof} 
	Interpreting this system as a Kepler problem with a symmetric perturbation, we express the perturbation term as
	\begin{equation*}
		f(r,z,\ep)=\frac{2z^{2}-r^{2}}{(r^{2}+z^{2})^{5/2}}+O(\ep)
		=\frac{3\sin^{2}\varphi-1}{\rho^{3}}+O(\ep).
	\end{equation*}

    The existence of $\xi_{\ep}$ follows from Lemma \ref{lem:xi-r-ep}. Theorem \ref{thm:z-sym exist} implies property (b). To establish property (c), we apply Theorem \ref{thm:Exist infinity many} through the following calculations. 
    
    First, determine the sign of $D(f)$ by computing 
    \begin{equation*}
    	\tilde{T}(f) = \frac{2}{e}\int_{0}^{\pi} \frac{3}{r^{2}(\theta)}\cos\theta\mathrm{d}\theta = \frac{6\pi}{\om^{4}},\quad E(f)=\int_{0}^{\pi}\frac{6}{r(\theta)}\mathrm{d}\theta + \frac{6\pi}{\om^{2}}=\frac{12\pi}{\om^{2}},
    \end{equation*}
we obtain 
\begin{equation*}
	D(f) = \left(\frac{12\pi}{\om^{2}}\right)^{2} - \left(\int_{0}^{\pi}\frac{6}{r(\theta)}\cos2\theta\mathrm{d}\theta\right)^{2} 
	= \left(\frac{12\pi}{\om^{2}}\right)^{2} > 0.
\end{equation*}
By Corollary \ref{coro:linear stable}, $\xi_{\ep}(t)$ is linearly stable and this proves property (a). Then we need to verify condition (ii) in Theorem \ref{thm:Exist infinity many}.

    We compute $\tilde{V}(f)$ using polar coordinates with the substitution $\sin\varphi = e\sin\theta$ where $e = \sqrt{1+2h\om^{2}}$,
    \begin{equation}
    	\begin{aligned}
    		\tilde{V}(f) &= \int_{\mathcal{H}}f(r,z,0)\mathrm{d}r\mathrm{d}z 
    		= \int_{\mathcal{H}}\frac{3\sin^{2}\varphi-1}{\rho^{2}}\mathrm{d}\rho\mathrm{d}\varphi \\
    		&= \frac{2}{\om^2}\int_{\varphi^-}^{\varphi^+}(3\sin^2\varphi-1)\cos^2\varphi\left(1+\frac{2h\om^2}{\cos^2\varphi}\right)^{1/2}\mathrm{d}\varphi \\
    		&= \frac{2e^2}{\om^2}\int_{-\pi/2}^{\pi/2}(3e^2\sin^2\theta-1)\cos^2\theta\mathrm{d}\theta \\
    		&= -\frac{\pi}{4\om^{2}}e^{2}(4-3e^{2}).
    	\end{aligned}
    \end{equation}
Then we study the right-hand side,
\begin{equation*}	
	2C(e)\tilde{A}(f) + \frac{1}{2}C^{2}(e)\sqrt{D(f)} 
	= \frac{2\pi C(e)}{\om^{2}}(3C(e)-1),
\end{equation*}
with $\tilde{A}(f) = -\int_{0}^{\pi}\frac{1}{r(\theta)}\mathrm{d}\theta = -\frac{\pi}{\om^{2}}$.
	Crucially, the inequality
	\begin{equation}
		-\frac{\pi}{4\om^{2}}e^{2}(4-3e^{2}) < \frac{2\pi C(e)}{\om^{2}}(3C(e)-1) \quad \forall e\in(0,1)
	\end{equation}
	implies condition (ii) holds.
\end{proof}

\section{The n-pyramidal problem}\label{sec:pyramidal}

In this section, we apply the preceding results to the $n$-pyramidal problem. Recall that the fixed axis is the $z$-axis and the masses of the $1+n$ bodies are $m_{0},m_1,\cdots,m_n$ and $m_j=m$ for $j=1,2,\cdots,n$. Let $q_{1}=(x,y,z)\in\mathbb{R}^{3}$ be the position of $m_1$, since the mass center is fixed at the origin, the position of $m_0$ is $q_{0}=(0,0,-n\ep z)$, where $\ep=m\slash m_0$ is the mass ratio. Correspondingly, the velocity of $m_1$ is $\dot{q}_{1}=(\dot{x},\dot{y},\dot{z})$ and that of $m_0$ is $\dot{q}_0=(0,0,-n\ep\dot{z})$. Then the system can be described by the motion of $m_1$ and the Lagrangian is
\begin{equation}
	\begin{aligned}
		\hat{L}(q,\dot{q})&=\frac{nm}{2}(\dot{x}^{2}+\dot{y}^{2}+(1+n\ep)\dot{z}^2)\\
		&+\frac{1}{2}m^{2}\left(\frac{nM(n)}{\sqrt{x^{2}+y^{2}}}+\frac{2n\ep^{-1}}{\sqrt{x^{2}+y^{2}+(1+n\ep)^{2}z^{2}}}\right),
	\end{aligned}
\end{equation}
where 
\begin{equation}
	M(n)=\frac{1}{2}\sum\limits_{i=1}\limits^{n-1}\csc(\frac{i\pi}{n}).
\end{equation}
Assuming $m_0 = 1$ without loss of generality (implying $m = \ep$), we define the normalized Lagrangian $\tilde{L} = \hat{L}/(nm)$. Under cylindrical coordinates
$$
	x=r\cos\theta,\quad y=r\sin\theta,\quad z=z,
$$
this becomes
\begin{equation*}
	\tilde{L}(r,\theta,z,\dot{r},\dot{\theta},\dot{z})=\frac{1}{2}(\dot{r}^{2}+r^{2}\dot{\theta}^{2}+(1+n\ep)\dot{z}^2)+\frac{M(n)\ep}{2r}+\frac{1}{\sqrt{r^{2}+(1+n\ep)^{2}z^{2}}}.
\end{equation*}
After a further coordinate change $\hat{z} = \sqrt{1+n\ep}\,z$, the Legendre transformation gives us 
\begin{equation*}
	p_{r} = \dot{r}, \quad p_{\theta} = r^2\dot{\theta}, \quad p_{\hat{z}} = \dot{\hat{z}},
\end{equation*}
we obtain the reduced Hamiltonian
\begin{equation}
	H_{\om,\ep}(p_{r},p_{\hat{z}},r,\hat{z}) = \frac{1}{2}(p_{r}^{2} + p_{\hat{z}}^{2}) + \frac{\om^2}{2r^2} - \frac{M(n)\ep}{2r} - \frac{1}{\sqrt{r^{2} + (1+n\ep)\hat{z}^{2}}}
\end{equation}
with fixed angular momentum $p_{\theta}=r^2\dot{\theta}=\om$.
Hereafter, we denote $\hat{z}$ simply by $z$ when no confusion arises. The parameter set $\Gamma_\ep$ can be explicitly calculated for this system.
\begin{prop}
	For $-(1+(M(n)\ep)/2)^2<2h\om^2<-(M(n)\ep)^2/4$, energy surface $\mathcal{M}_\ep$ is a compact contact manifold diffeomorphic to $\mathbb{S}^{3}$ with contact form $\lambda$.
\end{prop}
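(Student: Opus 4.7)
The plan is to adapt the argument of Proposition \ref{prop;homeomorphic to S3 1} directly to the Hamiltonian $H_{\om,\ep}$ of the $n$-pyramidal problem. I would first locate the unique critical point of $H_{\om,\ep}$ and identify the minimum energy, then combine this with the asymptotic behavior of $H_{\om,\ep}$ as $|z|\to\infty$ to pin down both ends of the admissible energy window. Once compactness of $\mathcal{M}_\ep$ is established, Morse theory together with the mechanical form of the Hamiltonian finishes the proof exactly as in Section \ref{sec:KP}.

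A direct computation of $\nabla H_{\om,\ep}$ shows that its only zero is $(0,0,r_\ep,0)$, where $r_\ep=\om^2/(1+M(n)\ep/2)$ is obtained by solving $-\om^2/r^3+M(n)\ep/(2r^2)+1/r^2=0$. Setting $A=1+M(n)\ep/2$ and plugging in gives $H_{\om,\ep}(0,0,r_\ep,0)=-A^2/(2\om^2)$, which coincides with the left endpoint of the hypothesis $2h\om^2>-(1+M(n)\ep/2)^2$. The Hessian at this critical point is diagonal with strictly positive entries (the kinetic block is the identity, and the potential block is positive by the same direct check as in the Kepler case), so this critical point is a non-degenerate global minimum.

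For the upper bound I would study the reduced potential $V_\ep(r,z)=\om^2/(2r^2)-M(n)\ep/(2r)-1/\sqrt{r^2+(1+n\ep)z^2}$ and observe that $V_\ep$ is strictly increasing in $z^2$. As $|z|\to\infty$, the limiting potential $\om^2/(2r^2)-M(n)\ep/(2r)$ attains its infimum $-(M(n)\ep)^2/(8\om^2)$ at $r=2\om^2/(M(n)\ep)$, so whenever $h<-(M(n)\ep)^2/(8\om^2)$, equivalently $2h\om^2<-(M(n)\ep)^2/4$, the Hill's region $\mathcal{H}_\ep$ is bounded in the $z$-direction. The monotonicity of $V_\ep$ in $z^2$ further implies that the slice $\mathcal{H}_\ep\cap\{z=0\}$ is the widest $r$-interval of $\mathcal{H}_\ep$; on this slice, the boundary equation $2hr^2+(M(n)\ep+2)r-\om^2=0$ has two positive roots $r^\pm$ because the discriminant is positive precisely under the lower bound on $h$ verified in the previous step. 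Therefore $\mathcal{H}_\ep\subset[r^-,r^+]\times\mathbb{R}$ is compact, and so is $\mathcal{M}_\ep$.

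Finally, since the only critical point of $H_{\om,\ep}$ below the chosen energy is a non-degenerate global minimum, Morse theory identifies $\{H_{\om,\ep}\le h\}$ with a closed $4$-disk whose boundary $\mathcal{M}_\ep$ is homeomorphic to $\mathbb{S}^3$. The contact-type property is inherited from the mechanical structure exactly as in Proposition \ref{prop;homeomorphic to S3}: the Liouville vector field on $\{H_{\om,\ep}\le h\}$ is transverse to $\mathcal{M}_\ep$ and yields a contact form $\lambda$ with $\mathrm{d}\lambda=\omega$. The main technical wrinkle is tying the two sides of the hypothesis to compactness: the lower bound is needed to guarantee that the Hill's region at $z=0$ is a nonempty bounded interval (positive discriminant of the quadratic), while the upper bound is needed to prevent $\mathcal{H}_\ep$ from escaping to infinity in $z$; both ultimately reduce to the monotonicity of $V_\ep$ in $z^2$ and the elementary analysis of the limiting potential.
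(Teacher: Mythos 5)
Your proposal is correct and follows exactly the route the paper intends (the paper only remarks that the proof is similar to Proposition \ref{prop;homeomorphic to S3 1}): locate the unique critical point $(0,0,\om^2/(1+M(n)\ep/2),0)$, verify the minimum value $-\bigl(1+M(n)\ep/2\bigr)^2/(2\om^2)$ matches the lower bound, use monotonicity in $z^2$ and the limiting potential $\om^2/(2r^2)-M(n)\ep/(2r)$ (with infimum $-(M(n)\ep)^2/(8\om^2)$) for the upper bound, and conclude by Morse theory and contact-type of mechanical energy surfaces. All computations check out, including the positive-definite Hessian and the positivity of the discriminant of $2hr^2+(M(n)\ep+2)r-\om^2=0$ exactly under the stated lower bound.
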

\begin{proof}
	The proof is similar to Proposition \ref{prop;diffeomorphic to S3 1}, leaving it to the reader.
\end{proof}
\begin{thm}
	For sufficiently small $\ep$ and $2h\om^2\in(-\frac{(2+M(n)\ep)^2}{4},-\frac{(M(n)\ep)^2}{4})$ with $2\leq n\leq 472$,
	\begin{enumerate}
		\item[(a)] the n-pyramidal problem has a $\om$-relative periodic solution $\xi_{\ep}(t)$ that lies on the $r$-axis and it is linearly stable. 
		\item[(b)] there exists one and only one $z$-symmetric brake orbit $\xi_{bz,\ep}(t)$ in $\M_\ep$ which forms a Hopf link with $\xi_{\ep}(t)$.
		\item[(c)] the n-pyramidal problem admits infinitely many $\om$-relative periodic solutions in $\M_\ep$.
	\end{enumerate}
\end{thm}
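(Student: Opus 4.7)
My plan is to cast the reduced $n$-pyramidal Hamiltonian as a perturbed spatial Kepler problem and then invoke the main results of Section \ref{sec:KPSP}. Taylor-expanding
\begin{equation*}
\frac{1}{\sqrt{r^{2}+(1+n\ep)z^{2}}}=\frac{1}{\sqrt{r^{2}+z^{2}}}-\frac{n\ep z^{2}}{2(r^{2}+z^{2})^{3/2}}+O(\ep^{2})
\end{equation*}
puts $H_{\om,\ep}$ into the form \eqref{eq:spacial kepler pertur Ham} with perturbation
\begin{equation*}
f(r,z,0)=-\frac{M(n)}{2r}+\frac{nz^{2}}{2(r^{2}+z^{2})^{3/2}}.
\end{equation*}
Only $r$ and $z^{2}$ appear, so the symmetry conditions \eqref{eq;symm-cond} are automatic. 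Part (a) will then follow from Lemma \ref{lem:xi-r-ep} combined with Corollary \ref{coro:linear stable} once $D(f)>0$ is verified, part (b) is immediate from Theorem \ref{thm:z-sym exist}, and part (c) reduces to checking condition (ii) of Theorem \ref{thm:Exist infinity many}.

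The core computation is an explicit evaluation of $\tilde{A}(f)$, $\tilde{T}(f)$, $\tilde{V}(f)$, $E(f)$ and $D(f)$. From $f(r,0,0)=-M(n)/(2r)$ and $\partial_{r}f(r,0,0)=M(n)/(2r^{2})$, parity in $\theta$ forces $\tilde{T}(f)=0$, while $\tilde{A}(f)=-M(n)\om^{2}\pi/(2\sqrt{1-e^{2}})$ follows from $\int_{0}^{\pi}d\theta/(1+e\cos\theta)=\pi/\sqrt{1-e^{2}}$. Passing to spherical coordinates,
\begin{equation*}
f(\rho\cos\varphi,\rho\sin\varphi,0)=-\frac{M(n)}{2\rho\cos\varphi}+\frac{n\sin^{2}\varphi}{2\rho},
\end{equation*}
gives $\partial_{\varphi}^{2}f(r,0,0)=(2n-M(n))/(2r)$, hence
\begin{equation*}
E(f)=\frac{(2n-M(n))\om^{2}\pi}{2\sqrt{1-e^{2}}}.
\end{equation*}
Evaluating the analogous integral weighted by $\cos 2\theta$ via the standard Poisson kernel identity, a short computation yields
\begin{equation*}
D(f)=\frac{(2n-M(n))^{2}\om^{4}\pi^{2}}{\sqrt{1-e^{2}}\,(1+\sqrt{1-e^{2}})^{2}},
\end{equation*}
which is strictly positive whenever $2n\neq M(n)$. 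Since $M(n)\sim(n\log n)/\pi$ only crosses $2n$ near $n\approx e^{2\pi}\approx 535$, the non-degeneracy holds throughout $2\le n\le 472$, establishing part (a).

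For $\tilde{V}(f)$ I split
\begin{equation*}
\tilde{V}(f)=-\frac{M(n)}{2}\iint_{\mathcal{H}}\frac{dr\,dz}{r}+\frac{n}{2}\iint_{\mathcal{H}}\frac{z^{2}\,dr\,dz}{(r^{2}+z^{2})^{3/2}}.
\end{equation*}
Both integrals are evaluated in polar coordinates $(\rho,\varphi)$ using the substitution $\sin\varphi=e\sin\theta$ together with the trigonometric identities from Lemma \ref{lemma:kepler equal}, exactly as in the ellipsoid proof. The outcome is a closed-form expression $\tilde{V}(f)=M(n)\Phi_{1}(e)+n\Phi_{2}(e)$ with elementary $\Phi_{1},\Phi_{2}$. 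Substituting all the above into condition (ii) of Theorem \ref{thm:Exist infinity many}, part (c) reduces to verifying the single scalar inequality
\begin{equation*}
G(n,e):=\tilde{V}(f)-2C(e)\tilde{A}(f)-\tfrac{1}{2}C^{2}(e)\,\mathrm{sign}(E(f))\sqrt{D(f)}\neq 0
\end{equation*}
for every $e\in(0,1)$.

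The main obstacle is establishing $G(n,e)\neq 0$ uniformly on $e\in(0,1)$ for each $2\le n\le 472$. Since both $M(n)$ and $n$ enter $G$ linearly with transcendental coefficients in $e$, the condition is equivalent to the ratio $M(n)/n$ avoiding a specific $e$-dependent curve; the super-linear growth $M(n)\sim(n\log n)/\pi$ forces this curve to be crossed for $n$ sufficiently large, which should be the source of the threshold $n\le 472$. Establishing the inequality rigorously will require asymptotic expansions of $G$ near the endpoints $e=0^{+}$ and $e=1^{-}$ to fix its sign there, combined with a monotonicity or finite numerical argument on a compact sub-interval of $(0,1)$ for each admissible $n$; pinpointing exactly where the sign of $G$ first changes, and confirming that this only occurs at $n\ge 473$, is the most delicate step of the proof.
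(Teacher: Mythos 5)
Your setup and the computations of $\tilde T(f)$, $\tilde A(f)$, $E(f)$, and $D(f)$ agree with the paper (your expression for $D(f)$ is algebraically equivalent to the paper's $\frac{(2n-M(n))^2\pi^2\om^4 G^2(e)}{4(1-e^2)}$ after noting $G^2(e)=4\sqrt{1-e^2}/(1+\sqrt{1-e^2})^2$), and parts (a) and (b) are handled correctly. The gap is entirely in part (c), and it is a real one.

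Your plan asserts that
$\tilde V(f)=-\frac{M(n)}{2}\iint_{\mathcal H}\frac{dr\,dz}{r}+\frac{n}{2}\iint_{\mathcal H}\frac{z^2\,dr\,dz}{(r^2+z^2)^{3/2}}$
reduces to an \emph{elementary} closed form $M(n)\Phi_1(e)+n\Phi_2(e)$. The second integral does ($\pi\om^2 C^2(e)$), but the first does not: after the substitution $\sin\varphi=e\sin\theta$ it becomes proportional to $\int_{-\pi/2}^{\pi/2}\frac{\cos^2\theta}{(1-e^2\sin^2\theta)^{3/2}}\,d\theta$, a complete elliptic integral. So the "closed form $\Phi_1$" you would feed into the sign analysis of $G(n,e)$ does not exist in elementary terms, and the subsequent program of fixing the sign of $G(n,e)$ by asymptotics at $e\to 0^+$, $e\to 1^-$ plus monotonicity/numerics for each $n\le 472$ has no starting point; even if it did, it would be both heavy and fragile compared with what is actually needed.

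The paper sidesteps the exact evaluation with a one-line pointwise bound: since the term $\int_{\mathcal H}\frac{1}{\cos\varphi}\,d\rho\,d\varphi$ enters $\tilde V(f)$ with a negative coefficient $-M(n)/2$, it suffices to \emph{lower} bound it. Writing $(1+\tan^2\varphi)^{3/2}\ge 1+\frac32\tan^2\varphi$ inside the integrand turns the elliptic integral into two elementary ones, giving
$\tilde V(f)\le \frac{\pi}{4}(2n-M(n))\om^2C^2(e)-\frac{M(n)\pi\om^2 C(e)}{\sqrt{1-e^2}}$.
Meanwhile an exact computation yields
$2C(e)\tilde A(f)+\tfrac12 C^2(e)\sqrt{D(f)}=\frac{\pi}{4}(2n-M(n))\om^2C^2(e)\frac{G(e)}{\sqrt{1-e^2}}-\frac{M(n)\pi\om^2 C(e)}{\sqrt{1-e^2}}$.
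The two sides differ only by a factor $G(e)/\sqrt{1-e^2}$ on the first term, and $G(e)/\sqrt{1-e^2}>1$ for every $e\in(0,1)$ (since $s(1+s)^2<4$ for $s=\sqrt{1-e^2}\in(0,1)$). Hence once $2n>M(n)$ (which is where the threshold $n\le 472$ enters), the strict inequality $\tilde V(f)< 2C(e)\tilde A(f)+\tfrac12 C^2(e)\sqrt{D(f)}$ holds uniformly in $e\in(0,1)$, and condition (ii) of Theorem \ref{thm:Exist infinity many} is verified with a single algebraic comparison, no asymptotics or numerics. This structural matching of the two sides is the key idea your proposal is missing.
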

This result clearly implies Theorem \ref{thm:The isosceles three body problem}.
\begin{remark}
	The linear stability of $\xi_{\ep}(t)$ refers to the linear stability in this submanifold. The global linear stability of  $\xi_{\ep}(t)$ can be found in \cite{Moe92,HLO20,HOT23,OS23}.
\end{remark}
\begin{proof}
We interpret this system as a Kepler problem with a symmetric perturbation, where the perturbation term can be written as
\begin{equation*}
	\begin{aligned}
		&f(r,z,\ep)=f(\rho,\varphi,\ep)=-\frac{M(n)}{2\rho\cos\varphi}+\frac{n\sin^{2}\varphi}{2\rho}+O(\ep),\\
		&f(r,0,0)=-\frac{M(n)}{2r}.
	\end{aligned}
\end{equation*}
The existence of $\xi_{\ep}$ follows from Lemma \ref{lem:xi-r-ep}. Then Theorem \ref{thm:z-sym exist} implies property (b). To establish property (c), we apply Theorem \ref{thm:Exist infinity many} through the following analysis.

First, we determine the sign of $D(f)$ by computing
\begin{equation*}
	\tilde{T}(f)=\frac{2}{e}\int_{0}^{\pi} r^{2}(\theta)\cdot\frac{M(n)}{2r^{2}(\theta)}\cos\theta\mathrm{d}\theta=0,\quad E(f)=\left( n-\frac{M(n)}{2}\right)\left(\int_{0}^{\pi}r(\theta)\mathrm{d}\theta\right),
\end{equation*}
we obtain
\begin{equation*}
	\begin{aligned}			
		D(f)&=\left( n-\frac{M(n)}{2}\right)^{2}\left(\int_{0}^{\pi}r(\theta)\mathrm{d}\theta\right)^{2}-\left( n-\frac{M(n)}{2}\right)^{2}\left(\int_{0}^{\pi}r(\theta) \cos2\theta\mathrm{d}\theta\right)^{2}\\
		&=\left( n-\frac{M(n)}{2}\right)^{2}\cdot\frac{\pi^{2}\om^{4}}{1-e^2}\left(1-\left(\frac{-e}{1+\sqrt{1-e^2}}\right)^4\right)\\
		&=\frac{(2n-M(n))^2\pi^{2}\om^{4}G^2(e)}{4(1-e^2)}>0,
	\end{aligned}
\end{equation*}
where $G(e)=\sqrt{1-\left(\frac{-e}{1+\sqrt{1-e^2}}\right)^4}$. Here we used the fact that $\int_{0}^{\pi}\frac{\cos nt}{1+e\cos t}\mathrm{d}t=\frac{\pi}{\sqrt{1-e^2}}\left(\frac{-e}{1+\sqrt{1-e^2}}\right)^n$ for $e\in(0,1)$. By Corollary \ref{coro:linear stable}, $\xi_{\ep}(t)$ is linearly stable and this implies property (a).

When $2\leq n\leq472$, this is equivalent to $2n>M(n)$, therefore $E(f)>0$ and we need verify condition (ii) in Theorem \ref{thm:Exist infinity many}. Then we calculate $\tilde{V}(f)$,
	\begin{equation*}
	 	\tilde{V}(f)=\int_{\mathcal{H}}f(r,z,0)\mathrm{d}r\mathrm{d}z=-\frac{M(n)}{2}\int_{\mathcal{H}}\frac{1}{\cos\varphi}\mathrm{d}\rho\mathrm{d}\varphi+\frac{n}{2}\int_{\mathcal{H}}\sin^2\varphi\mathrm{d}\rho\mathrm{d}\varphi.
	\end{equation*}
For $\int_{\mathcal{H}}\sin^2\varphi\mathrm{d}\rho\mathrm{d}\varphi$, using variable substitution $\sin\varphi=e\sin\theta$ and formula (\ref{formula 1}),
\begin{equation*}
	\begin{aligned}
		&\int_{\mathcal{H}}\sin^2\varphi\mathrm{d}\rho\mathrm{d}\varphi=\frac{1}{-h}\int_{\varphi^-}^{\varphi^+}\sin^2\varphi\left(1+\frac{2h\om^2}{\cos^2\varphi}\right)^{1/2}\mathrm{d}\varphi\\
		=&\frac{1}{-h}\int_{\varphi^-}^{\varphi^+}\tan^2\varphi(e^2-\sin^2\varphi)^{1/2}\mathrm{d}\sin\varphi\\
		=&\frac{1}{-h}\int_{\varphi^-}^{\varphi^+}(\sec^2\varphi-1)(e^2-\sin^2\varphi)^{1/2}\mathrm{d}\sin\varphi\\
		=&\frac{e^2}{-h}\int_{-\pi/2}^{\pi/2}\frac{\cos^2\theta}{\cos^2\theta+(1-e^2)\sin^2\theta}-\cos^2\theta\mathrm{d}\theta\\
		=&\frac{e^2}{-h}\left(\frac{\pi}{1+\sqrt{1-e^2}}-\frac{\pi}{2}\right)=\pi\om^2C^2(e).
	\end{aligned}
\end{equation*}
For $\int_{\mathcal{H}}\frac{1}{\cos\varphi}\mathrm{d}\rho\mathrm{d}\varphi$, using the inequality $(1+\tan^2\varphi)^{3/2}\geq(1+\frac{3}{2}\tan^2\varphi)$,
\begin{equation*}
	\begin{aligned}
		&\int_{\mathcal{H}}\frac{1}{\cos\varphi}\mathrm{d}\rho\mathrm{d}\varphi=\frac{1}{-h}\int_{\varphi^-}^{\varphi^+}\frac{1}{\cos\varphi}\left(1+\frac{2h\om^2}{\cos^2\varphi}\right)^{1/2}\mathrm{d}\varphi\\
		=&\frac{1}{-h}\int_{\varphi^-}^{\varphi^+}(1+\tan^2\varphi)^{3/2}(e^2-\sin^2\varphi)^{1/2}\mathrm{d}\sin\varphi\\
		\geq&\frac{1}{-h}\int_{\varphi^-}^{\varphi^+}(1+\frac{3}{2}\tan^2\varphi)(e^2-\sin^2\varphi)^{1/2}\mathrm{d}\sin\varphi\\
		=&\pi\om^2C(e)\left(\frac{1}{\sqrt{1-e^2}}+1\right)+\frac{3}{2}\pi\om^2C^2(e)\\
		=&\frac{\pi}{2}\om^2C^2(e)+\frac{2\pi\om^2C(e)}{\sqrt{1-e^2}}.
	\end{aligned}
\end{equation*}
Combining above computations, we obtain
\begin{equation*}
	\tilde{V}(f)\leq\frac{\pi}{4}(2n-M(n))\om^2C^2(e)-\frac{M(n)\pi\om^2C(e)}{\sqrt{1-e^2}}.
\end{equation*}
Then we calculate the right-hand side,
	\begin{equation*}
			\tilde{A}(f)=\int_{0}^{\pi}r^{2}(\theta)f(r(\theta),0,0)\mathrm{d}\theta=-\frac{M(n)}{2}\int_{0}^{\pi}r(\theta)\mathrm{d}\theta
			=-\frac{M(n)\pi\om^{2}}{2\sqrt{1-e^{2}}},
	\end{equation*}
and
	\begin{equation*}
			2C(e)\tilde{A}(f)+C^2(e)\sqrt{D(f)}\slash2\\
			=\frac{\pi}{4}(2n-M(n))\om^{2}C^2(e)\frac{G(e)}{\sqrt{1-e^2}}-\frac{M(n)\pi\om^{2}C(e)}{\sqrt{1-e^{2}}}.		
	\end{equation*}
	Due to $\frac{G(e)}{\sqrt{1-e^2}}>1$ for all $e\in(0,1)$, condition (ii) holds.
\end{proof}

\section{Appendix. The Maslov-type index for symplectic paths}

In this section, we briefly review the index theory for symplectic paths. Details can be found in \cite{Lon02}. Let $(\mathbb{R}^{2n},\omega)$ be the standard symplectic vector space with coordinates $(x_1,\cdots,x_n,y_1,\cdots,y_n)$, then $\omega=\sum_{i=1}^{n}\mathrm{d}x_i\wedge\mathrm{d}y_i$. Let $J=\begin{pmatrix}
	0 & -I_n\\
	I_n & 0
\end{pmatrix}$, where $I_n$ is the identity matrix on $\mathbb{R}^n$. The symplectic group $\text{Sp}(2n)$ is defined by
$$\text{Sp}(2n)=\{M\in GL(2n,\mathbb{R})\big|M^TJM=J\},
$$
with topology induced from $\mathbb{R}^{4n^2}$. For $\tau>0$, we define the symplectic paths set
$$\mathcal{P}_\tau(2n)=\{\gamma\in C([0,\tau],\text{Sp}(2n))\big|\gamma(0)=I_{2n}\}.
$$
For any $\omega\in\mathbb{U}$ (the unite circle in $\mathbb{C}$), the following $\omega$-degenerate hypersurface of codimension one in $\text{Sp}(2n)$ is defined by
$$\text{Sp}(2n)_\omega^0=\{M\in\text{Sp}(2n)\big|\det(M-\omega I_{2n})=0\}.
$$
Moreover, the $\omega$-regular set of $\text{Sp}(2n)$ is defined by $\text{Sp}(2n)_\omega^*=\text{Sp}(2n)\setminus\text{Sp}(2n)_\omega^0$. For any two continuous paths $\xi$ and $\eta:[0,\tau]\rightarrow\text{Sp}(2n)$ with $\xi(\tau)=\eta(0)$, we define their concatenation as:
$$\eta*\xi(t)=\begin{cases}
	\xi(2t), & \text{ if } 0\leq t\leq\tau/2;\\
	\eta(2t-\tau), & \text{ if } \tau/2\leq t\leq \tau.
\end{cases}
$$

Given any two $2m_k\times2m_k$ matrices of square block form $M_k=\begin{pmatrix}
	A_k & B_k\\
	C_k & D_k
\end{pmatrix}$ with $k=1,2$, the $\diamond$-product of $M_1$ and $M_2$ defined by the following $2(m_1+m_2)\times2(m_1+m_2)$ matrix $M_1\diamond M_2$:
$$M_1\diamond M_2=\begin{pmatrix}
	A_1 & 0 & B_1 & 0\\
	0 & A_2 & 0 & B_2\\
	C_1 & 0 & D_1 & 0\\
	0 & C_2 & 0 & D_2
\end{pmatrix}.
$$
We define a special continuous symplectic path $\xi_m\subset\text{Sp}(2n)$ by
$$\xi_n(t)=\begin{pmatrix}
	2-\frac{t}{\tau} & 0\\
	0 & \left(2-\frac{t}{\tau}\right)^{-1}
\end{pmatrix}^{\diamond n} \text{ for }  0\leq t\leq\tau.
$$
For $M\in\text{Sp}(2n)_\omega^0$, we define a co-orientation of $\text{Sp}(2n)_\omega^0$ at $M$ by the positive direction $\frac{\mathrm{d}}{\mathrm{d}t}Me^{tJ}\big|_{t=0}$ of the path $Me^{tJ}$ with $|t|$ sufficiently small. Now we will give the definition of $\omega$-index.
\begin{defi}
	For $\omega\in\mathbb{U}$, $\gamma\in\mathcal{P}_\tau(2n)$, \textbf{the $\omega$-index} of $\gamma$ is defined as
	$$i_\omega(\gamma)=\left[e^{-\ep J}\gamma*\xi_n:\text{Sp}(2n)_\omega^0\right],
	$$
	for $\ep>0$ small enough, where $[\cdot:\cdot]$ is the intersection number. The $\omega$-nullity of $\gamma$ is defined as
	$$\nu_\omega(\gamma)=\dim_\mathbb{C}\ker_\mathbb{C}(\gamma(\tau)-\omega I_{2n}).
	$$
\end{defi}
Particularly, if $\gamma(\tau)\in\text{Sp}(2n)_\omega^*$, we can take $\ep=0$.

For any two path $\gamma_j\in\mathcal{P}_\tau(2n_j)$ with $j=1,2$, let $\gamma_1\diamond\gamma_2(t)=\gamma_1(t)\diamond\gamma_2(t)$ for all $t\in[0,\tau]$. We can easily obtain
$$i_\omega(\gamma_1\diamond\gamma_2)=i_\omega(\gamma_1)+i_\omega(\gamma_2),\quad \forall\omega\in\mathbb{U}.
$$
For any symplectic path $\gamma\in\mathcal{P}_\tau(2n)$ and $m\in\mathbb{Z}^+$, we define the $m$-th iteration $\gamma^m:[0,m\tau]\rightarrow\text{Sp}(2n)$ by
$$\gamma^m(t)=\gamma(t-j\tau)\gamma(\tau)^j,\quad\text{ for } j\tau\leq t\leq(j+1)\tau,j=0,1,\cdots m-1.
$$
\begin{defi}
	The \textbf{mean index} of $\gamma$ is defined as
	$$\hat{i}(\gamma)=\lim\limits_{m\rightarrow+\infty}\frac{i_1(\gamma^m)}{m}.
	$$
\end{defi}
Let $x(t)$ be a periodic solution of Hamiltonian system $\dot{x}=J\nabla H(x)$ with period $\tau$, then the fundamental solution of the linearized equation $$\dot{y}=JD^2H(x(t))y$$ is a symplectic path starting form $I_{2n}$, denote it by $\gamma(t)$. Then we can define the $\omega$-index and mean index for $x(t)$.
\begin{defi}
	The $\omega$-index and $\omega$-nullity of $x(t)$ is defined as
	$$i_\omega(x)=i_\omega(\gamma),\quad \nu_\omega(x)=\nu_\omega(\gamma).
	$$
	The mean index of $x(t)$ is defined as
	$$\hat{i}(x)=\hat{i}(\gamma).
	$$
\end{defi}
For mechanical systems with $\gamma\in\mathcal{P}_\tau(2)$, the analysis is simpler than the general case. Let $\sigma(t)=\rho(t)e^{i\vartheta(t)}$ and $\sigma^{-1}(t)$ denote the eigenvalues of $\gamma(t)$, where $\vartheta(t)$ is a continuous non-decreasing function. The mean index of $\gamma$ is given by  
$$
\hat{i}(\gamma) = \frac{\vartheta(\tau) - \vartheta(0)}{\pi}.  
$$

\hfill\newline
\noindent{\bf Data Availability.} Data sharing not applicable to this article as no datasets were generated or analyzed during the current study.

\hfill\newline
\noindent{\bf Acknowledgement.} 
The authors are deeply grateful to the anonymous referees for their careful reading of the manuscript and insightful comments and suggestions.

\hfill\newline
\bibliographystyle{abbrv}
\bibliography{RefKepler1}

\begin{thebibliography}{10}

\bibitem{AFFHvK12}
P.~Albers, J.~W. Fish, U.~Frauenfelder, H.~Hofer, and O.~van Koert.
\newblock Global surfaces of section in the planar restricted 3-body problem.
\newblock {\em Arch. Ration. Mech. Anal.}, 204(1):273--284, 2012.

\bibitem{AFvKP12}
P.~Albers, U.~Frauenfelder, O.~van Koert, and G.~P. Paternain.
\newblock Contact geometry of the restricted three-body problem.
\newblock {\em Comm. Pure Appl. Math.}, 65(2):229--263, 2012.

\bibitem{Bk13}
G.~D. Birkhoff.
\newblock Proof of {P}oincar\'{e}'s geometric theorem.
\newblock {\em Trans. Amer. Math. Soc.}, 14(1):14--22, 1913.

\bibitem{BOZ19}
A.~Boscaggin, R.~Ortega, and L.~Zhao.
\newblock Periodic solutions and regularization of a {K}epler problem with
  time-dependent perturbation.
\newblock {\em Trans. Amer. Math. Soc.}, 372(1):677--703, 2019.

\bibitem{CHHL23}
D.~Cristofaro-Gardiner, U.~Hryniewicz, M.~Hutchings, and H.~Liu.
\newblock Contact three-manifolds with exactly two simple {R}eeb orbits.
\newblock {\em Geom. Topol.}, 27(9):3801--3831, 2023.

\bibitem{Fit12}
R.~Fitzpatrick.
\newblock {\em An introduction to celestial mechanics}.
\newblock Cambridge University Press, Cambridge, 2012.

\bibitem{Franks92}
J.~Franks.
\newblock Geodesics on {$S^2$} and periodic points of annulus homeomorphisms.
\newblock {\em Invent. Math.}, 108(2):403--418, 1992.

\bibitem{GLSarxiv}
C.~Grotta-Ragazzo, L.~Liu, and P.~A.~S. Salom{\~a}o.
\newblock Non-resonant hopf links near a hamiltonian equilibrium point.
\newblock arXiv:2503.03226, 2025.

\bibitem{Hu2023ASD}
X.~Hu, L.~Liu, Y.~Ou, P.~A.~S. Salom{\~a}o, and G.~Yu.
\newblock A symplectic dynamics approach to the spatial isosceles three-body
  problem.
\newblock {\em J. Eur. Math. Soc.}, published online first, 2025.

\bibitem{HLO20}
X.~Hu, Y.~Long, and Y.~Ou.
\newblock Linear stability of the elliptic relative equilibrium with {$(1 +
  n)$}-gon central configurations in planar {$n$}-body problem.
\newblock {\em Nonlinearity}, 33(3):1016--1045, 2020.

\bibitem{HOT23}
X.~Hu, Y.~Ou, and X.~Tang.
\newblock Linear stability of an elliptic relative equilibrium in the spatial
  {$n$}-body problem via index theory.
\newblock {\em Regul. Chaotic Dyn.}, 28(4-5):731--755, 2023.

\bibitem{HOW15}
X.~Hu, Y.~Ou, and P.~Wang.
\newblock Trace formula for linear {H}amiltonian systems with its applications
  to elliptic {L}agrangian solutions.
\newblock {\em Arch. Ration. Mech. Anal.}, 216(1):313--357, 2015.

\bibitem{KS65}
P.~Kustaanheimo and E.~Stiefel.
\newblock Perturbation theory of {K}epler motion based on spinor
  regularization.
\newblock {\em J. Reine Angew. Math.}, 218:204--219, 1965.

\bibitem{LC20}
T.~Levi-Civita.
\newblock Sur la r\'{e}gularisation du probl\`eme des trois corps.
\newblock {\em Acta Math.}, 42(1):99--144, 1920.

\bibitem{Lon02}
Y.~Long.
\newblock {\em Index theory for symplectic paths with applications}, volume 207
  of {\em Progress in Mathematics}.
\newblock Birkh\"{a}user Verlag, Basel, 2002.

\bibitem{Mk84}
R.~Moeckel.
\newblock Heteroclinic phenomena in the isosceles three-body problem.
\newblock {\em SIAM J. Math. Anal.}, 15(5):857--876, 1984.

\bibitem{Moe92}
R.~Moeckel.
\newblock Linear stability analysis of some symmetrical classes of relative
  equilibria.
\newblock In {\em Hamiltonian dynamical systems ({C}incinnati, {OH}, 1992)},
  volume~63 of {\em IMA Vol. Math. Appl.}, pages 291--317. Springer, New York,
  1995.

\bibitem{MvK22}
A.~Moreno and O.~van Koert.
\newblock Global hypersurfaces of section in the spatial restricted three-body
  problem.
\newblock {\em Nonlinearity}, 35(6):2920--2970, 2022.

\bibitem{Moser70}
J.~Moser.
\newblock Regularization of {K}epler's problem and the averaging method on a
  manifold.
\newblock {\em Comm. Pure Appl. Math.}, 23:609--636, 1970.

\bibitem{OS23}
Y.~Ou and J.~Sun.
\newblock Linear stability of the elliptic relative equilibrium with {$ (1 + 7)
  $}-gon central configuration in planar {$ N $}-body problem.
\newblock {\em Discrete Contin. Dyn. Syst.}, 43(2):559--567, 2023.

\bibitem{Pc12}
Poincar\'{e}.
\newblock Sur un th\'{e}or\`{e}me de g\'{e}om\`{e}trie.
\newblock {\em Rend. Circ. Mat. Palermo}, 33(1):375--407, 1912.

\bibitem{Zhao23}
L.~Zhao.
\newblock Generalized periodic orbits of the time-periodically forced {K}epler
  problem accumulating at the center and of circular and elliptic restricted
  three-body problems.
\newblock {\em Math. Ann.}, 385(1-2):59--99, 2023.

\end{thebibliography}

\end{document}